\DeclarePairedDelimiter{\abs}{|}{|}
\DeclarePairedDelimiter{\norm}{\lVert}{\rVert}
\DeclareFontFamily{U}{matha}{\hyphenchar\font45}
\DeclareFontShape{U}{matha}{m}{n}{
<-6> matha5 <6-7> matha6 <7-8> matha7
<8-9> matha8 <9-10> matha9
<10-12> matha10 <12-> matha12
}{}
\DeclareSymbolFont{matha}{U}{matha}{m}{n}
\DeclareFontFamily{U}{mathx}{\hyphenchar\font45}
\DeclareFontShape{U}{mathx}{m}{n}{
<-6> mathx5 <6-7> mathx6 <7-8> mathx7
<8-9> mathx8 <9-10> mathx9
<10-12> mathx10 <12-> mathx12
}{}
\DeclareSymbolFont{mathx}{U}{mathx}{m}{n}
\DeclareMathDelimiter{\vvvert} {0}{matha}{"7E}{mathx}{"17}%
\DeclarePairedDelimiterX{\snorm}[1]
{\vvvert}
{\vvvert}
{\ifblank{#1}{\:\cdot\:}{#1}}
\DeclareMathOperator {\diag}{diag}
\DeclareMathOperator {\var}{Var}  
\DeclareMathOperator {\Exp}{E}    
\DeclareMathOperator {\Le}{d_L}     
\DeclareMathOperator {\K}{d_K}      
\DeclareMathOperator{\Prb}{P}     
\DeclareMathOperator {\rank}{rank}
\DeclareMathOperator {\Tr}{Tr}
\begin{document}
\markboth{Y. Akama}{Equi-correlation, the bulk and the largest
eigenvalues of correlation matrices, and scaling}
\title{Correlation matrix of equi-correlated normal population:
fluctuation of the largest eigenvalue, scaling of the bulk eigenvalues,
and stock market}
\author{Yohji Akama}
\address{Department of Mathematics, Graduate School of Science, Tohoku
University, Aramaki, Aoba, Sendai, 980-8578, Japan.\\
\email{yoji.akama.e8@tohoku.ac.jp}}

\maketitle
  \keywords{Portfolio, Mar\v{c}enko-Pastur distribution,
financial correlations, factor model}
\begin{history}
\received{(Day Month Year)}
\revised{(Day Month Year)}
\end{history}
\begin{abstract}
Given an $N$-dimensional sample of size $T$ and form a sample
correlation matrix $\mathbf{C}$. Suppose that $N$ and $T$ tend to infinity
with $T/N $ converging to a fixed finite constant $Q>0$. If the
population is a factor model, then the eigenvalue distribution of $\mathbf{C}$
almost surely converges weakly to Mar\v{c}enko-Pastur distribution such
  that the
index is $Q$ and the scale parameter is the limiting ratio of the
 specific variance to the $i$-th variable
$(i\to\infty)$.  For an $N$-dimensional normal population with equi-correlation
  coefficient $\rho$, which is a
one-factor model, for the largest eigenvalue $\lambda$ of $\mathbf{C}$, we prove
that $\lambda/N$ converges to the equi-correlation coefficient $\rho$
almost surely. These results suggest an important role of an
equi-correlated normal population and a factor model in (Laloux et al. Random matrix theory
and financial correlations, \emph{Int. J. Theor. Appl. Finance}, 2000):
the histogram of the eigenvalue of sample correlation matrix of the
returns of stock prices fits the density of Mar\v{c}enko-Pastur
distribution of index $T/N $ and scale parameter
  $1-\lambda/N$. Moreover, we provide the limiting distribution of the largest eigenvalue of a sample
covariance matrix of an equi-correlated normal population.  We discuss
  the phase transition as to the decay rate of the
  equi-correlation coefficient in $N$.
\end{abstract}

\section{Introduction}

For risk management and asset allocation, a sample correlation matrix is
essential. One of the pillars of Markowitz portfolio
theory~\citep{markowitz1959portfolio,elton2014modern} is the study of correlation (or covariance) matrices, e.g.,
the optimal weight of each financial asset, given a set of assets with
average returns and risks, so that the overall portfolio offers the best
return for a fixed level of risk or, alternatively, the smallest risk
for a given overall return. For these optimizations, the eigenvalues of
covariance/correlating matrices are important. However, the eigenvalues
of a sample covariance matrix/correlation matrix in finance are
dispersed with noise, so various ways to eliminate noisy eigenvalues
from them are studied by \cite{10.1142/S0219024900000255},
\cite{10.1214/07-AOS559}, \cite{Ledoit110, 10.1214/12-AOS989},
\cite{ledoit11:_eigen}, \cite{10.1214/17-AOS1601}, to cite a few. For
this purpose, \cite{10.1142/S0219024900000255} proposed to approximate
heuristically the distribution of noisy eigenvalues of a sample
covariance/correlation matrix, with \emph{Mar\v{c}enko-Pastur
distribution}~\citep{mar} of random matrix theory. Their idea influenced
 on the statistical methods on vaccine design~\citep{10.1371/journal.pcbi.1006409}.
We will discuss the
heuristic of \cite{10.1142/S0219024900000255}
mathematically, with an \emph{equi-correlated normal population}, which plays an important role in various risk
models such as \emph{dynamic equicorrelation}~\citep{de}
 and \emph{constant correlation model}~\citep{elton2014modern}.

First of all, we fix the notation.
Let $\bm{X}_1,\ldots,\bm{X}_T  \in{{\mathbb R}}^N$ be a random sample.
By the \emph{data matrix}, we mean ${\mathbf{X}}=[\bm{X}_1,\ldots,\bm{X}_T
 ]\in{{\mathbb R}}^{N\times T}$. Let ${{1\le i\le N}}$ and ${{1\le t\le  T}}$. The $i t$-entry of ${\mathbf{X}}$ is denoted by
 $x_{i t}$, and the $i$-th row by 
 $\bm{x}_i\in{{\mathbb R}}^{1\times  T}$. For the sample average of $\bm{x}_i$, write $\overline{x}_i=T^{-1}{\sum_{t=1}^ T} x_{i t}$.  Let ${\overline{\bm{x}}}_i:=[\overline{x}_i,\ldots,\overline{x}_i]\in
 {{\mathbb R}}^{1\times T}$ and
 $\overline{\mathbf{X}}:=[\overline{x}_1,\ldots,\overline{x}_N]^\top [1,\ldots,1]\in{{\mathbb R}}^{N\times T}$.
The \emph{sample correlation matrix} is, by definition,
\begin{align*}
\mathbf{C}&={\mathbf{Y}}{\mathbf{Y}}^\top\in{{\mathbb R}}^{N\times N}\ \text{where}\  {\mathbf{Y}}^\top=\left[\frac{(\bm{x}_1-{\overline{\bm{x}}}_1)^\top}{\norm{\bm{x}_1-{\overline{\bm{x}}}_1}},\ldots,\frac{(\bm{x}_N -{\overline{\bm{x}}}_N)^\top}{\norm{\bm{x}_N -{\overline{\bm{x}}}_N}}\right].
\end{align*}
Here, the notation $\norm{.}$ is the Euclidean norm. 
The \emph{sample covariance matrix} is denoted by
$\mathbf{S}=T^{-1}{\mathbf{X}}{\mathbf{X}}^\top\in{{\mathbb R}}^{N\times N}$.
By a \emph{distribution function}, we mean a nondecreasing, right-continuous function $F:{{\mathbb R}}\to[0,1]$ such that $\lim_{x\to-\infty} F(x)=0$ and $\lim_{x\to+\infty} F(x)=1$. 

We review the results on eigenvalue distributions from random matrix theory.
The \emph{empirical spectral distribution} of a symmetric matrix
${\mathbf{P}}\in{{\mathbb R}}^{N\times N}$ is, by definition,
 \begin{align*}
  F^{{\mathbf{P}}}(x)=\dfrac{1}{N}\#\Set{i\, \colon\, {{1\le i\le N}},\ \lambda_i({\mathbf{P}})\le x}\quad (x\in{{\mathbb R}})
 \end{align*}
where $\lambda_1({\mathbf{P}}),\ldots,\lambda_N({\mathbf{P}})$ are the eigenvalues of ${\mathbf{P}}$, indexed in nonincreasing order.
According to~\citep[p.~10]{yao.15:_large,bai10:spect}, \emph{Mar\v{c}enko-Pastur
  distribution  with the sample size to dimension ratio index $Q>0$ and scale parameter $\sigma^2\in(0,\,\infty)$} has a density
\begin{align}
\frac{Q}{2\pi \sigma^2 x}\sqrt{(\sigma^2b_Q-x)(x-\sigma^2a_Q)} 1_{x\in[\sigma^2a_Q,\,\sigma^2b_Q]}\label{density}
\end{align}
and has a point mass of value $1 - Q$ at the origin if $Q < 1$.  Here,
$a_Q={(1-\sqrt{1/Q})^2}$, $$b_Q={(1+\sqrt{1/Q})^2},$$ and $1_{\cdots}$ denotes the indicator
function. Mar\v{c}enko-Pastur distribution has expectation
$\sigma^2$. The distribution function of Mar\v{c}enko-Pastur
distribution of the sample to dimension ratio index $Q$ and scale parameter
$\sigma^2$ is denoted by $\mathrm{MP}_{Q,\sigma^2}$. We write $\mathrm{MP}_{Q}$ for
$\mathrm{MP}_{Q,1}$. Clearly, $\mathrm{MP}_{Q,\sigma^2}(x)=\mathrm{MP}_{Q}(x/\sigma^2)$ for all
$x\in {{\mathbb R}}$. We say a sequence $(F_i)_i$ of distribution functions
\emph{converges weakly} to a function $F$, if $F_i(x)\to F(x)$ for every $x\in{{\mathbb R}}$ where
$F$ is continuous.
\begin{proposition}\label{prop:MP law}
Let $\set{x_{i t} \colon\, i,t\ge1}$
be a double array of \emph{i.\@i.\@d.\@} random variables with $\Exp\abs{x_{11}}^2 =\sigma^2\in(0,\,\infty)$.
  Suppose ${{N,T\to\infty,\ {T/N \to Q}}}\in (0,\, \infty)$. Then, almost surely,
 \begin{enumerate}
  \item \label{assert:MP LT}
	$F^{\mathbf{S}}$ converges weakly to $\mathrm{MP}_{Q,\sigma^2}$. \citep[p.~12]{yao.15:_large}
	
  \item \label{assert:MP correlation}
       $F^{\mathbf{C}}$ converges weakly to $\mathrm{MP}_{Q}$.
	\citep[Theorem~1.2]{jiang}
 \end{enumerate}
\end{proposition}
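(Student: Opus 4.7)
For part (1), the plan is to invoke the Stieltjes transform method, which is the classical route to the Mar\v{c}enko--Pastur theorem. After a truncation-and-centering step (replace $x_{it}$ by $x_{it}1_{\abs{x_{it}}\le C}-\Exp[x_{it}1_{\abs{x_{it}}\le C}]$ and send $C\to\infty$; the resulting perturbation of $F^{\mathbf{S}}$ is controlled by the rank inequality and the Hoffman--Wielandt inequality), I would study the Stieltjes transform
\begin{equation*}
m_N(z)=\frac{1}{N}\Tr((\mathbf{S}-zI_N)^{-1}),\qquad z\in\mathbb{C}^+,
\end{equation*}
and, via the Schur complement applied to the $i$th diagonal entry of $(\mathbf{S}-zI_N)^{-1}$ together with the quadratic concentration identity $\bm{v}^\top A\bm{v}\approx T^{-1}\sigma^2\Tr A$ for an independent isotropic vector $\bm{v}$, show that $m_N(z)$ converges almost surely to the unique solution $m(z)$ in $\mathbb{C}^+$ of the Mar\v{c}enko--Pastur self-consistent equation with parameters $(Q,\sigma^2)$. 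Stieltjes inversion then identifies the limit as $\mathrm{MP}_{Q,\sigma^2}$, and almost-sure (as opposed to in-probability) convergence follows from an Azuma or Efron--Stein bound on $m_N(z)-\Exp m_N(z)$ combined with Borel--Cantelli.

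For part (2), I would reduce to part (1) by comparing $\mathbf{C}$ with a normalized version of the centered Gram matrix. Let $P=I_T-T^{-1}\bm{1}\bm{1}^\top$ and $\widetilde{\mathbf{S}}=\mathbf{X} P\mathbf{X}^\top$; then $\mathbf{C}=\mathbf{D}^{-1/2}\widetilde{\mathbf{S}}\mathbf{D}^{-1/2}$ with $\mathbf{D}=\diag(\norm{\bm{x}_i-\overline{\bm{x}}_i}^2)_{i=1}^N$. Since $T\mathbf{S}-\widetilde{\mathbf{S}}=\mathbf{X} T^{-1}\bm{1}\bm{1}^\top\mathbf{X}^\top$ has rank one, the rank inequality for empirical spectral distributions gives $F^{\widetilde{\mathbf{S}}/T}\Rightarrow\mathrm{MP}_{Q,\sigma^2}$ by (1). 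It remains to establish
\begin{equation*}
\max_{1\le i\le N}\abs*{T^{-1}\norm{\bm{x}_i-\overline{\bm{x}}_i}^2-\sigma^2}\longrightarrow 0\quad\text{almost surely},
\end{equation*}
after which a standard Stieltjes-transform perturbation estimate shows that $F^{\mathbf{C}}$ has the same weak limit as $F^{\sigma^{-2}\widetilde{\mathbf{S}}/T}$, namely $\mathrm{MP}_Q$ (using the scaling relation $\mathrm{MP}_{Q,\sigma^2}(\sigma^2 x)=\mathrm{MP}_Q(x)$ recorded in the excerpt).

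The main obstacle is precisely this uniformization of the row-wise law of large numbers in (2). Pointwise convergence of $T^{-1}\norm{\bm{x}_i-\overline{\bm{x}}_i}^2$ to $\sigma^2$ is immediate, but upgrading to a uniform bound over $N=\Theta(T)$ rows under only the finite second-moment hypothesis is delicate: a naive union bound would demand higher moments, so one needs either a truncation at an $N$-dependent scale coupled with a Bernstein-type tail inequality, or a more direct spectral perturbation argument that bypasses uniform control of the normalizers. This is the technical heart of Jiang's Theorem~1.2 cited in the statement; once it is in hand, the rest of the argument is a routine rank/perturbation computation that transfers the MP limit from $\widetilde{\mathbf{S}}/(T\sigma^2)$ to $\mathbf{C}$.
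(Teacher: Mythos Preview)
The paper does not prove Proposition~\ref{prop:MP law}; it is recorded as a known background result, with part~\eqref{assert:MP LT} attributed to \cite[p.~12]{yao.15:_large} and part~\eqref{assert:MP correlation} to \cite[Theorem~1.2]{jiang}, and is used thereafter as a black box. There is therefore no in-paper argument to compare your proposal against.

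Read as a sketch of how those cited theorems are actually proved, your outline is accurate. For part~\eqref{assert:MP LT} the Stieltjes-transform route with truncation and a martingale concentration bound is exactly the standard argument. For part~\eqref{assert:MP correlation} you correctly isolate the real obstruction: under only a second-moment hypothesis, the uniform row-wise statement $\max_{i\le N}\abs{T^{-1}\norm{\bm{x}_i-\overline{\bm{x}}_i}^2-\sigma^2}\to 0$ is not available by a union bound (indeed, by Proposition~\ref{prop:bai-yin2} applied to $y_{it}=x_{it}^2$ it would require a fourth moment). Of the two remedies you list, the one that actually carries the argument under a bare second moment is the second---a trace-based perturbation bound on the L\'evy distance that needs only \emph{averaged} control of the normalizers, not uniform control. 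This is precisely the device the present paper deploys in its own proofs: see the inequality \eqref{levy distance:Cp and EET}, obtained from \cite[Lemma~2.7]{bai99:_method}, in the proof of Theorem~\ref{thm:equilsd2}. Your rank-one comparison between $T\mathbf{S}$ and the centered Gram matrix is the same step the paper uses (via Proposition~\ref{rank}) when passing between $\mathbf{S}$ and $\mathbf{E}\mathbf{E}^\top$.
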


However, the eigenvalues of financial correlation matrices distribute similarly
as the random matrix theory suggests,
but somehow differently.
\cite{10.1142/S0219024900000255} considered a sample
correlation matrix $\mathbf{C}$ ``associated to the time series of the different
stocks of the S\&P500 (or other major markets)'' where $N$ is the
number of assets and $T$ is the number of observations.
\cite{10.1142/S0219024900000255} found that
the histogram of the eigenvalues of the financial, sample correlation
matrix $\mathbf{C}$ fits not the density of $\mathrm{MP}_{T/N }$, but a \emph{scaled}
density, specifically, the density of
\begin{align}\label{shrank MP}\mathrm{MP}_{\frac{T}{N},\,1-\frac{\lambda_1(\mathbf{C})}{N}}.\end{align}
In addition to the fitting result, for the first eigenvalue $\lambda_1(\mathbf{C})$ of $\mathbf{C}$, Laloux et
al.\@ remarked the following:
\begin{quote}The corresponding eigenvector
is, as expected, the ``market'' itself, i.\@e.\@, it has roughly equal
components on all the $N$ stocks.
\end{quote}
\cite{10.1142/S0219024900000255} interpreted the support of the
density function of the fitting Mar\v{c}enko-Pastur distribution, as the
interval of noisy eigenvalues of the sample correlation matrix
$\mathbf{C}$.
Based on the findings of \cite{10.1142/S0219024900000255}, 
in order to improve risk management, \cite{BUN20171}
proposed ``\emph{eigenvalue clipping}: all eigenvalues in bulk of the fitting Mar\v{c}enko-Pastur
spectrum are deemed
as noise and thus replaced by a constant value whereas the principal components outside of the
bulk (the spikes) are left unaltered.''
As portfolios of hundreds
or thousands of assets becomes the target of research,  many approaches have been proposed to deal
with the problem of dimensionality.  By  the limiting
regime ${{N,T\to\infty,\ {T/N \to Q}}}\in (0,\infty)$ of Proposition~\ref{prop:MP law}, we
would like to understand mathematically why the heuristic of
\cite{10.1142/S0219024900000255} is meaningful.

Let us hypothesize about the population of the stock prices Laloux et
al.\@ considered, from their remark on the eigenvector that ``has roughly equal components
on all the $N$ stocks.'' It is comparable to an eigenvector $[1,\ldots, 1]^\top$  corresponding to the
largest eigenvalue
$\lambda_1({\mathbf{\Sigma}}_\rho)$ of an \emph{equi-correlation matrix}
\begin{align*}
{\mathbf{\Sigma}}_\rho=\begin{bmatrix}
1 & \rho & \cdots & \rho\\
\rho & 1 & \cdots & \rho\\
\vdots & \vdots & & \vdots\\
\rho &\rho &\cdots & 1
\end{bmatrix}\in {{\mathbb R}}^{N\times N}\qquad(0\le\rho<1),
\end{align*}
where
\begin{align}\lambda_1({\mathbf{\Sigma}}_\rho)=(N-1)\rho +1
\ge\lambda_2({\mathbf{\Sigma}}_\rho)=\cdots=\lambda_N ({\mathbf{\Sigma}}_\rho)=1-\rho.\label{ev}
\end{align}
\cite{10.2307/4616081} studied hypothesis tests
on $\rho=0$ vs. $\rho>0$.
\cite{ishii21:_hypot} developed nonparametric
 tests of the high-dimensional covariance structures such as a scaled
 identity matrix, a diagonal matrix and ${\mathbf{\Sigma}}_\rho$ for $N>T$ cases.

In particular, when $\rho>0$, $\lambda_1({\mathbf{\Sigma}}_\rho)$ is \emph{unbounded}
with respect to $N$, and thus there is a `spectral gap' between
$\lambda_1({\mathbf{\Sigma}}_\rho)=(N-1)\rho +1$ and $\lambda_2({\mathbf{\Sigma}}_\rho)=1-\rho$.
We presume that the population is a normal population with the population correlation matrix
${\mathbf{\Sigma}}_\rho$.

Then, for an equi-correlated normal population, we established the following scaling theorem for the limiting distribution of
the \emph{bulk eigenvalues} of the sample correlation matrix $\mathbf{C}$:
\begin{proposition}[\protect{\citet{akama22:_dichot_behav_of_guttm_kaiser}}]\label{prop:AH22}Let $\mathbf{C}$ be a sample correlation matrix formed from an $N$-dimensional
 normal population such that the population correlation matrix is
 ${\mathbf{\Sigma}}_\rho$ with deterministic $\rho\in[0,\,1)$. Suppose
 ${{N,T\to\infty,\ {T/N \to Q}}}\in(0,\,\infty)$. Then, almost surely, the empirical spectral
 distribution $F^{\mathbf{C}}$ converges weakly to
 $$\mathrm{MP}_{Q,\, 1-\rho}.$$\end{proposition}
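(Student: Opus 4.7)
The key algebraic observation is that $\mathbf{\Sigma}_\rho=(1-\rho)\mathbf I_N+\rho\,\bm 1\bm 1^\top$ is a rank-one perturbation of $(1-\rho)\mathbf I_N$, i.e.\ a one-factor structure. I would exploit this via the representation
\[
 \bm X_t \;=\; \sqrt{\rho}\, f_t\, \bm 1 \;+\; \sqrt{1-\rho}\, \bm\varepsilon_t,\qquad t=1,\ldots,T,
\]
with $f_t\sim N(0,1)$ and $\bm\varepsilon_t\sim N(\bm 0,\mathbf I_N)$ mutually independent, so that $\cov(\bm X_t)=\mathbf{\Sigma}_\rho$. Stacking in columns and centering each row gives
\[
 \mathbf Z := \mathbf X - \overline{\mathbf X} \;=\; \sqrt{\rho}\,\bm 1 \bm g^\top + \sqrt{1-\rho}\,\widetilde{\mathbf E},
\]
where $\bm g=\bm f-\bar f\bm 1$ and $\widetilde{\mathbf E}=\mathbf E-\overline{\mathbf E}$ is the row-centered i.i.d.\ standard normal matrix. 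Expanding the outer product yields $\mathbf Z\mathbf Z^\top = (1-\rho)\,\mathbf E\mathbf E^\top + \mathbf R$, where $\mathbf R$ is symmetric of bounded rank: a linear combination of $\bm 1\bm 1^\top$, rank-one terms built from the row means of $\mathbf E$, and cross-terms along $\bm 1$ and $\mathbf E\bm g$.

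\emph{Reducing $\mathbf C$ to a scaled Gram matrix.} One has $\mathbf C = \mathbf D^{-1}\bigl(T^{-1}\mathbf Z\mathbf Z^\top\bigr)\mathbf D^{-1}$ with $\mathbf D=\diag\bigl(T^{-1/2}\norm{\bm z_1},\ldots,T^{-1/2}\norm{\bm z_N}\bigr)$. Using the decomposition above, $T^{-1}\norm{\bm z_i}^2$ splits into three chi-square-type contributions, and standard chi-square concentration combined with a union bound over the $N=O(T)$ indices yields $\max_i |T^{-1}\norm{\bm z_i}^2-1|\to 0$ almost surely, hence $\mathbf D\to\mathbf I_N$ in operator norm. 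Combined with an a.s.\ operator-norm bound on $T^{-1}\mathbf Z\mathbf Z^\top$ (which follows from the Marchenko--Pastur bulk limit together with a separate control of the top eigenvalue), this makes $\mathbf C$ and $T^{-1}\mathbf Z\mathbf Z^\top$ differ by a perturbation of vanishing operator norm, so their empirical spectral distributions have the same weak limit. Bai's rank inequality then gives $\norm{F^{T^{-1}\mathbf Z\mathbf Z^\top}-F^{(1-\rho)T^{-1}\mathbf E\mathbf E^\top}}_\infty \le \rank(\mathbf R)/N \to 0$, so $F^{\mathbf C}$ and $F^{(1-\rho)T^{-1}\mathbf E\mathbf E^\top}$ share the same weak limit.

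\emph{Identifying the limit and anticipated obstacle.} By Proposition~\ref{prop:MP law}\ref{assert:MP LT} applied with $\sigma^2=1$ to the i.i.d.\ standard normal array $\mathbf E$, $F^{T^{-1}\mathbf E\mathbf E^\top}\Rightarrow\mathrm{MP}_Q$ almost surely. Multiplying every eigenvalue by $1-\rho$ pushes the limiting measure forward by $x\mapsto(1-\rho)x$, which by $\mathrm{MP}_{Q,\sigma^2}(x)=\mathrm{MP}_Q(x/\sigma^2)$ is exactly $\mathrm{MP}_{Q,\,1-\rho}$, as claimed. The anticipated main obstacle is the first step of this reduction: because each row of $\mathbf Y$ is normalized by its \emph{own} norm, one must establish \emph{simultaneous} control of all $N$ row norms about $1$ together with an a.s.\ operator-norm bound on $T^{-1}\mathbf Z\mathbf Z^\top$, in order to convert the small diagonal error $\mathbf D^{-1}-\mathbf I_N$ into a vanishing operator-norm perturbation of the conjugated matrix. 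Once those two uniformities are in hand, the rank-reduction identity, Bai's inequality and Proposition~\ref{prop:MP law} handle the rest.
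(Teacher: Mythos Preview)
Your overall architecture---factor representation, rank reduction via Bai's inequality, and an appeal to the i.i.d.\ Mar\v{c}enko--Pastur law---matches the paper's approach. The paper proves the result as a special case of its Theorem~\ref{thm:equilsd2} using exactly the decomposition $x_{it}=\sqrt{\rho}f_t+\sqrt{1-\rho}e_{it}$ and the rank inequality to strip off the factor and centering terms.

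However, there is a genuine gap in your reduction from $\mathbf C$ to $T^{-1}\mathbf Z\mathbf Z^\top$. You assert an ``a.s.\ operator-norm bound on $T^{-1}\mathbf Z\mathbf Z^\top$'' and plan to use it together with $\|\mathbf D^{-1}-\mathbf I\|\to0$ to conclude that $\mathbf C-T^{-1}\mathbf Z\mathbf Z^\top$ has vanishing operator norm. But for $\rho>0$ the operator norm of $T^{-1}\mathbf Z\mathbf Z^\top$ is \emph{not} bounded: it equals $\lambda_1(\mathbf S^\circ)$, which is of order $N\rho$ (indeed the paper proves $\lambda_1(\mathbf S^\circ)/N\to\rho$ in Theorem~\ref{thm:paraphrased main theorem}). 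Hence $\|\mathbf C-T^{-1}\mathbf Z\mathbf Z^\top\|\lesssim \|\mathbf D^{-1}-\mathbf I\|\cdot\lambda_1(\mathbf S^\circ)$ is only $o(N)$, not $o(1)$, and the operator-norm perturbation argument collapses.

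There are two easy repairs. One is to reverse the order of your two reductions: since conjugation by $\mathbf D^{-1}$ preserves rank, apply the rank inequality \emph{first} to pass from $\mathbf C=\mathbf D^{-1}T^{-1}\mathbf Z\mathbf Z^\top\mathbf D^{-1}$ to $(1-\rho)\,\mathbf D^{-1}T^{-1}\widetilde{\mathbf E}\widetilde{\mathbf E}^\top\mathbf D^{-1}$; the latter \emph{does} have bounded operator norm (Bai--Yin), and now your $\mathbf D\to\mathbf I$ argument goes through. The other repair, which is what the paper actually does, is to abandon operator norm altogether and use Bai's trace-based L\'evy bound (Lemma~2.7 in \cite{bai99:_method}):
\[
\Le^4\bigl(F^{\mathbf C},\,F^{\mathbf E\mathbf E^\top}\bigr)\le \frac{2}{N}\Tr\bigl(\mathbf Y\mathbf Y^\top+\mathbf E\mathbf E^\top\bigr)\cdot\frac{1}{N}\Tr\bigl((\mathbf Y-\mathbf E)(\mathbf Y-\mathbf E)^\top\bigr),
\]
where the first factor is $O(1)$ and the second is shown to vanish via the same uniform row-norm control you already outlined. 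This route never sees the divergent top eigenvalue.
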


 It is worth noting that 
 the limiting spectral distribution~(LSD)
$\mathrm{MP}_{Q,\, 1-\rho}$ is similar to Mar\v{c}enko-Pastur
 distribution~\eqref{shrank MP} $\mathrm{MP}_{T/N ,\, 1-\lambda_1(\mathbf{C})/N}$  where the histogram of the eigenvalues of a financial
 correlation matrix fits to the density of \eqref{shrank MP}. The index
 $Q$ and the scale parameter $1-\rho$
 correspond to $T/N $ and $1-\lambda_1(\mathbf{C})/N$, respectively.
 
By comparing \eqref{shrank MP} to Proposition~\ref{prop:AH22}, we will show that the largest eigenvalue $\lambda_1(\mathbf{C})$ of the sample
correlation matrix $\mathbf{C}$ divided by the order $N$ of $\mathbf{C}$ converges almost
surely to the equi-correlation coefficient $\rho$, as follows. 
Hereafter, ${\stackrel{a.s.}{\to}}$ and ${\stackrel{D}{\to}}$
mean the almost sure convergence and the convergence in distribution
respectively. Unless otherwise stated, the limiting regime is ${{N,T\to\infty,\ {T/N \to Q}}}\in(0,\,\infty)$.
\begin{theorem}\label{thm:main}Suppose the assumptions of
 Proposition~\ref{prop:AH22}. Then,
 \begin{enumerate}
  \item  \label{consistency}
 $\lambda_1(\mathbf{C})\big/{N}\mathrel{\stackrel{a.s.}{\to}}\rho$.
\item\label{fluctuation}
 If, moreover, the population is $\mathrm{N}_N(\bm{0},\ {\mathbf{\Sigma}}_\rho)$ and
 $\rho>0$, then
 \begin{align*}
   \frac{\lambda_1(\mathbf{S}) - \tau}{\varsigma}\ {\stackrel{D}{\to}}\ \mathrm{N}(0,1)
 \end{align*}
 where\begin{align*}\tau=\frac{\left((N -1) \rho +1\right)
   \left(\left(1+(T-1 )N \right) \rho +N -1\right)}{N T \rho},\quad \varsigma=\frac{(N -1) \rho +1}{\sqrt{2 T}}.       
     \end{align*}
 \end{enumerate}
\end{theorem}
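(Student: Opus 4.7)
The plan is to prove Theorem~\ref{thm:main} in two parts by distinct methods: part~\ref{consistency} via the trace identity $\Tr(\mathbf{C})=N$ combined with Proposition~\ref{prop:AH22}, and part~\ref{fluctuation} via an orthogonal reduction of ${\mathbf{\Sigma}}_\rho$ to a diagonal spike model followed by a Schur-complement perturbation analysis.

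For part~\ref{consistency}, the identity $\Tr(\mathbf{C})=N$ gives $\lambda_1(\mathbf{C})/N = 1 - N^{-1}\sum_{i\ge 2}\lambda_i(\mathbf{C})$. To derive the upper bound $\limsup \lambda_1(\mathbf{C})/N\le\rho$ I test the weak convergence in Proposition~\ref{prop:AH22} against the bounded continuous truncation $x\mapsto \min(x,M)$ for some $M$ beyond the right edge $(1-\rho)b_Q$ of the limit support: this yields $N^{-1}\sum_i \min(\lambda_i(\mathbf{C}),M)\to 1-\rho$ a.s., while $N^{-1}\min(\lambda_1(\mathbf{C}),M)\le M/N\to 0$, and hence $\liminf N^{-1}\sum_{i\ge 2}\lambda_i(\mathbf{C})\ge 1-\rho$. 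For the matching lower bound I use the Rayleigh quotient along $\bm v=\bm 1/\sqrt N$, which gives $\lambda_1(\mathbf{C})/N\ge N^{-2}\bm 1^\top \mathbf{C}\bm 1$, and evaluate the right-hand side through the one-factor representation $\bm X_t=\sqrt\rho\,W_t\bm 1 + \sqrt{1-\rho}\,\bm\epsilon_t$. After centering, $\bm x_i-\overline{\bm x}_i = \sqrt\rho\,\widetilde{\bm W} + \sqrt{1-\rho}\,\widetilde{\bm\epsilon}_i$, and the dominant contribution to $\bm 1^\top\mathbf{C}\bm 1$ is the common-factor term $\rho\bigl(\sum_i r_i^{-1}\bigr)^2\|\widetilde{\bm W}\|^2$ with $r_i=\|\bm x_i-\overline{\bm x}_i\|$. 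Gaussian concentration gives $r_i^2/T\to 1$ uniformly in $i$ a.s.\ and $\|\widetilde{\bm W}\|^2/T\to 1$ a.s., so this term contributes $\rho N^2+o(N^2)$, and the cross and idiosyncratic terms are $o(N^2)$ by a standard LLN on independent Gaussians.

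For part~\ref{fluctuation}, orthogonal invariance reduces the problem: if $O^\top D O={\mathbf{\Sigma}}_\rho$ with $D=\diag(\lambda_1^{\mathrm{pop}},1-\rho,\ldots,1-\rho)$ and $\lambda_1^{\mathrm{pop}}=(N-1)\rho+1$, then $\mathbf{S}$ has the same spectrum as $T^{-1}D^{1/2}\mathbf{G}\mathbf{G}^\top D^{1/2}$, where $\mathbf{G}$ is an $N\times T$ matrix with i.i.d.\ $\mathrm{N}(0,1)$ entries. Splitting off the first row $\bm g_1^\top$ and writing $\mathbf{G}_\star$ for the remaining $(N-1)\times T$ block, this presents $\mathbf{S}$ as a block matrix
\begin{align*}
\begin{pmatrix} a & \bm b^\top \\ \bm b & M \end{pmatrix},\quad a=\tfrac{\lambda_1^{\mathrm{pop}}\|\bm g_1\|^2}{T},\quad \bm b=\tfrac{\sqrt{\lambda_1^{\mathrm{pop}}(1-\rho)}}{T}\,\mathbf{G}_\star\bm g_1,\quad M=\tfrac{1-\rho}{T}\,\mathbf{G}_\star\mathbf{G}_\star^\top.
\end{align*}
Bai--Yin gives $\|M\|_{\mathrm{op}}$ bounded a.s., whereas $a\sim\lambda_1^{\mathrm{pop}}$ diverges linearly in $N$, so $\lambda_1(\mathbf{S})$ lies above the spectrum of $M$ a.s., and the Schur-complement identity $\lambda_1(\mathbf{S})-a=\bm b^\top(\lambda_1(\mathbf{S})I-M)^{-1}\bm b$ expands, via a Neumann series in $M/\lambda_1(\mathbf{S})$, as $\lambda_1(\mathbf{S})=a+\|\bm b\|^2/a + O_P(1/T)$. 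The key identity $\|\bm b\|^2/a=(1-\rho)\|\mathbf{G}_\star\bm g_1\|^2/(T\|\bm g_1\|^2)$ equals in distribution $(1-\rho)\chi^2_{N-1}/T$ independently of $\bm g_1$, so this correction has deterministic mean $(1-\rho)(N-1)/T$ and fluctuations negligible on the $\varsigma$ scale. The remaining randomness is carried by $a-\mathrm{E}[a]=\lambda_1^{\mathrm{pop}}(\chi^2_T-T)/T$, and the classical CLT $(\chi^2_T-T)/\sqrt{2T}\stackrel{D}{\to}\mathrm{N}(0,1)$ delivers the conclusion, the exact form of $\tau$ arising from combining the leading mean $\lambda_1^{\mathrm{pop}}$ of $a$ with the mean of the Schur correction and absorbing the residual $O(1/T)$ terms into the $o_P(\varsigma)$ error.

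The principal obstacle is the quantitative control of the Schur-complement remainder in part~\ref{fluctuation}: I must show $\bm b^\top\bigl((\lambda_1(\mathbf{S})I-M)^{-1}-\lambda_1(\mathbf{S})^{-1}I\bigr)\bm b = o_P(\varsigma)$, for which I plan to combine Bai--Yin's operator-norm bound on $M$ with Hanson--Wright-type concentration for the bilinear forms $\bm b^\top M^k\bm b$. A secondary subtlety in part~\ref{consistency} is making the heuristic $\bm 1^\top\mathbf{C}\bm 1/N^2\to\rho$ rigorous in the joint limit, despite the $N$ rows of $\mathbf{Y}$ sharing the common factor $\widetilde{\bm W}$; the decisive input there is uniform control of $\max_i|r_i^2/T-1|$ via Gaussian concentration.
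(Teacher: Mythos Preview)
Your approach is sound and genuinely different from the paper's on both parts. For part~\eqref{consistency}, the paper first establishes $\lambda_1(\mathbf{S})/\lambda_1({\mathbf{\Sigma}}_\rho)\to 1$ a.s.\ by invoking a result of Merlev\`ede, Najim and Tian (stated as Proposition~\ref{prop:ledoit wolf}), which needs only tightness of $(F^{{\mathbf{\Sigma}}_\rho})_N$ and $\lambda_1({\mathbf{\Sigma}}_\rho)\to\infty$, and then transfers this to $\mathbf{C}$ via the spectral-norm inequality $\bigl|\sqrt{\lambda_1(\mathbf{C})/N}-\sqrt{\lambda_1(\mathbf{S}^\circ)/N}\bigr|\le\snorm{\mathbf{D}-\mathbf{I}}\sqrt{\lambda_1(\mathbf{S}^\circ)/N}$, where $\snorm{\mathbf{D}-\mathbf{I}}=\max_i\bigl|\sqrt{T}/\norm{\bm{x}_i-{\overline{\bm{x}}}_i}-1\bigr|\to 0$ a.s.\ is precisely the uniform denominator control you identify. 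Your route avoids that black box: the upper bound via $\Tr\mathbf{C}=N$ and integration of $\min(\cdot,M)$ against the weak limit in Proposition~\ref{prop:AH22} is slick, and the Rayleigh lower bound at $\bm{1}/\sqrt{N}$ is direct, though its cross and idiosyncratic terms require the same uniform $r_i$ control the paper establishes. For part~\eqref{fluctuation}, the paper simply applies the companion CLT of Merlev\`ede--Najim--Tian, which under the spectral-gap condition yields $\sqrt{T}\bigl(\lambda_1(\mathbf{S})/\lambda_1({\mathbf{\Sigma}})-1-T^{-1}\sum_{k\ge 2}\lambda_k({\mathbf{\Sigma}})/(\lambda_1({\mathbf{\Sigma}})-\lambda_k({\mathbf{\Sigma}}))\bigr)\stackrel{D}{\to}\mathrm{N}(0,2)$, and then substitutes the eigenvalues of ${\mathbf{\Sigma}}_\rho$ to read off $\tau$ and $\varsigma$ in one line. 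Your Schur-complement expansion is essentially a self-contained re-derivation of that theorem for a single spike; the Neumann-remainder obstacle you flag is tractable exactly as you outline, since Bai--Yin gives $\snorm{M}\to(1-\rho)b_Q$ a.s.\ while $\lambda_1(\mathbf{S})\ge a\asymp N$, making the remainder $O_P(1/N)=o_P(\varsigma)$ without needing Hanson--Wright. In short: the paper's route is much shorter but imports heavy external results; yours is longer but transparent, staying at the level of chi-squared CLTs and elementary operator-norm bounds.
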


From the first assertion, i.e., the consistency of $\lambda_1(\mathbf{C})/N$, of Theorem~\ref{thm:main}, Proposition~\ref{prop:AH22} suggests that an
equi-correlated normal population plays a role in Laloux et al.'s fitting
result to the density of \eqref{shrank MP}:
\begin{corollary}\label{cor:soundness} Assume the assumptions of
 Proposition~\ref{prop:AH22}. Then,
\begin{align*}
F^{\mathbf{C}}(x)-\mathrm{MP}_{\frac{T}{N},\, 1-\frac{\lambda_1(\mathbf{C})}{N}}(x)\mathrel{\stackrel{a.s.}{\to}}0\quad(x\ne0).
\end{align*}
\end{corollary}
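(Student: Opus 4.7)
The plan is a short triangle-inequality argument that combines Proposition~\ref{prop:AH22} with the first assertion of Theorem~\ref{thm:main}. Fix any $x\ne 0$ and decompose
\begin{align*}
F^{\mathbf{C}}(x)-\mathrm{MP}_{T/N,\,1-\lambda_1(\mathbf{C})/N}(x)
&=\bigl[F^{\mathbf{C}}(x)-\mathrm{MP}_{Q,\,1-\rho}(x)\bigr]\\
&\quad+\bigl[\mathrm{MP}_{Q,\,1-\rho}(x)-\mathrm{MP}_{T/N,\,1-\lambda_1(\mathbf{C})/N}(x)\bigr].
\end{align*}
The first bracket vanishes almost surely by Proposition~\ref{prop:AH22}: the Mar\v{c}enko-Pastur distribution function has a density plus at most a point mass at the origin, so every $x\ne 0$ is a continuity point of the limit, and the weak convergence in Proposition~\ref{prop:AH22} gives pointwise a.s.\ convergence there.

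For the second bracket, the hypothesis yields $T/N\to Q$, and Theorem~\ref{thm:main}.\ref{consistency} yields $\lambda_1(\mathbf{C})/N\mathrel{\stackrel{a.s.}{\to}}\rho$, so $(T/N,\,1-\lambda_1(\mathbf{C})/N)\mathrel{\stackrel{a.s.}{\to}}(Q,\,1-\rho)$ inside $(0,\infty)\times(0,1]$. Using the scaling identity $\mathrm{MP}_{Q',\sigma^2}(x)=\mathrm{MP}_{Q'}(x/\sigma^2)$, it suffices to verify that $(Q',\sigma^2)\mapsto \mathrm{MP}_{Q'}(x/\sigma^2)$ is continuous at $(Q,1-\rho)$ whenever $x\ne 0$. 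This is read off from the explicit density~\eqref{density}: the density depends continuously on $Q'$, the support endpoints $a_{Q'},b_{Q'}$ depend continuously on $Q'$, and the atom mass $(1-Q')_+$ at the origin is continuous in $Q'$. A dominated-convergence argument then gives continuity of the integrated distribution function at every $x\ne 0$, the only integrable edge singularities (of inverse-square-root type) being uniformly controlled on compact parameter neighbourhoods.

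The only point requiring any care is when $x$ is the upper or lower edge of the limiting bulk $(1-\rho)[a_Q,b_Q]$: the random endpoints $(1-\lambda_1(\mathbf{C})/N)b_{T/N}$ and $(1-\lambda_1(\mathbf{C})/N)a_{T/N}$ then oscillate around $x$. However, $\mathrm{MP}_{Q,\,1-\rho}$ has no atom at these soft edges (only a continuous density that vanishes there), so its distribution function is continuous at such $x$, which is exactly what the dominated-convergence step needs. Combining the two estimates proves the corollary. The main (and only non-routine) obstacle is this continuity at the soft edges; once it is handled, the argument is essentially a restatement of Theorem~\ref{thm:main}.\ref{consistency} followed by Proposition~\ref{prop:AH22}.
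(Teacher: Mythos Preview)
Your proof is correct and follows essentially the same route as the paper: both split the difference via the intermediate term $\mathrm{MP}_{Q,1-\rho}(x)$, invoke Proposition~\ref{prop:AH22} for the first piece, and use Theorem~\ref{thm:main}\eqref{consistency} together with continuity of $\mathrm{MP}_{Q'}(t)$ in $(Q',t)$ for the second. The only cosmetic difference is that the paper breaks the parameter convergence into two sub-steps (first replace $1-\lambda_1(\mathbf{C})/N$ by $1-\rho$, then replace $T/N$ by $Q$), whereas you argue joint continuity in $(Q',\sigma^2)$ at once.
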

Once the largest eigenvalue $\lambda_1(\mathbf{C})$ of $\mathbf{C}$ is found,
Corollary~\ref{cor:soundness} can suggest the support of
the density of the fitting Mar\v{c}enko-Pastur distribution to clean
up~(``eigenvalue clipping''), and then we
can eliminate the eigenvalues in the support to let a small, but
important eigenvalue~(e.\@g., Markowitz solution) show up. Moreover, $\lambda_1(\mathbf{C})$ has been concerned by
\cite{doi:10.1177/001316448104100102}.
              
The rest of the paper is organized as follows. In the next section, to
  generalize Proposition~\ref{prop:AH22}, we consider suitable factor
  models. Then, we establish the limiting spectral distribution
  of the sample correlation matrix $\mathbf{C}$ generated from such a
  factor model.
In Section~\ref{sec:real data analysis}, for returns of S\&P stock
 prices (2012-1-4/2021-12-31),
 we compute the largest eigenvalues
 of financial correlation matrices $\mathbf{C}$ divided by the order, and
 estimate the time average $\overline\rho$ of equi-correlation coefficient by
 GJR GARCH~\citep{10.2307/2329067} with the correlation structure being
 `equicorrelation'~\citep{de}. We show that
 $\lambda_1(\mathbf{C})/N$ predicts very well $\overline\rho$.
 In Section~\ref{sec:BBP},
 for a nonnormal population with equicorrelation coefficient $\rho=\rho_N$ decaying in
 $N$, we review the consistency and the asymptotic normality of
 $\lambda_1(\mathbf{S})$, by citing
 \cite{yata09:_pca_consis_non_gauss_data}. Then, for the limiting
 distribution of an appropriate scaling of a translation of $\lambda_1(\mathbf{S})$ of equi-correlated normal population,
 we discuss the phase transition from the normal population to
 \emph{Tracy-Widom distribution}~\citep{baik05:_phase}, as to the decay rate of $\rho_N$.  
The proofs of Theorem~\ref{thm:main},
 Corollary~\ref{cor:soundness}, and Theorem~\ref{thm:equilsd2} are postponed in
Section~\ref{proof:main}, 
 Section~\ref{proof:soundness}, and  Section~\ref{proof:equilsd2},  respectively, for the sake of the presentation.

\section{From equi-correlated normal population to factor models in limit}\label{sec:factor model in limit}

 Proposition~\ref{prop:AH22} was proved by the following well-known fact~\cite[(3.21)]{fan19:_larges}:
Assume that $\bm{X}_1,\ldots,\bm{X}_T
\stackrel{\mbox{\textrm{i.\@i.\@d.\@}}}{\sim} \mathrm{N}_N
 (\bm{0},\,{\mathbf{\Sigma}}_\rho)$, $0\le \rho<1$, and
 $[\bm{X}_1,\ldots,\bm{X}_T]=[x_{i t}]_{it}\in{{\mathbb R}}^{N\times T}$.  Then, there are independent
 standard normal random variables ${{f}}_t$,  $e_{i t}$ $({{1\le i\le N}},\
 {{1\le t\le T}})$ such that
 \begin{align}\label{decomp} 
  x_{i t}=  \sqrt{\rho}{{f}}_t + \sqrt{1-\rho}e_{i t} .
 \end{align}
Actually, an equi-correlated normal population is a \emph{factor
  model}~\citep{https://doi.org/10.1111/1468-0262.00392,mulaik}
  consisting of 1 factor. See Example~\ref{eg:ENP}.
  Beside this 1-factor model, a 3-factor model and a 5-factor model are
  employed by
  \cite{FAMA19933,FAMA20151} to describe stock returns  in asset pricing and portfolio management.

\cite{https://doi.org/10.1111/1468-0262.00392}
studied an inferential theory for a general factor model of large
dimensions to which classical factor analysis does not apply, by using
limiting regime $N,T\to\infty, \sqrt{N}/T\to0$. He derived 
the limiting distributions of the estimated factors and estimated factor
loadings,  the asymptotic normality of
estimated common components, and their convergence rates.
 The convergence rate of the
estimated factors and factor loadings can be faster than that of the
estimated common components. These achievements are obtained under liberal
conditions that allow for \emph{correlations} and heteroskedasticities. Stronger results are obtained when the idiosyncratic errors
are serially uncorrelated and homoskedastic. A necessary and sufficient
condition for consistency of the estimated factors is derived for large $N$ but fixed $T$.

We will study a factor model in the limiting regime $N,T\to\infty,\ T/N\to Q$.
First, we recall a factor model~\citep{https://doi.org/10.1111/1468-0262.00392,mulaik}:

\begin{definition}\label{def:factor model}For a nonnegative integer $K$, by a \emph{$K$-factor model}, we mean a data matrix
 \[
 \left[x_{i t}\right]_{i t}\in{{\mathbb{R}}}^{N\times T}
 \]
 such that
\begin{align*}
&x_{i t} = \mu_i + \bm{\ell}_i {{\bm{f}}}_t + e_{i t}\quad(1\le i\le N,\ 1\le
	  t\le  T),
	  \\
	  &\mbox{$\mu_i$ is a deterministic constant,}
	  \qquad\qquad\qquad\qquad\qquad\  \qquad\qquad\qquad\qquad(\mbox{\emph{mean}})	  \nonumber
	  \\
&\mbox{${\bm{\ell}}_i=[\ell_{i1},\ldots,\ell_{i K}]$ is a $K$-dimensional row
	  vector,} \qquad\qquad\qquad(\mbox{\emph{factor loadings}})
	  \\
&\mbox{${{\bm{f}}}_t=[f_{t 1},\ldots,f_{t k}]^\top$ is a $K$-dimensional
	  column vector,} \qquad\quad(\mbox{\emph{common factors}})
	  \\
	  &\mbox{$f_{t k}$ $(1\le t\le  T,\ 1\le k\le K)$ are
	  centered, i.i.d. with $\var f_{t 1}=1$},
	  \\
&\mbox{$e_{i t}$ $(1\le t\le  T)$ are centered,
	  i.i.d., for each $i$ $(1\le i\le N)$},
	  \\
&\mbox{$f_{t k}$ and $e_{i t}$ $(1\le i\le p,\
	  1\le t\le T,\ 1\le k\le K)$ are
	  independent},
	  \ \mbox{and}\\
	  &
	  \psi_i:=\var e_{i t}<\infty\quad\quad\quad\quad\quad\quad\quad\quad\quad\qquad  \qquad\qquad\qquad\qquad(\mbox{\emph{specific
	  variance}}).
 \end{align*}$e_{i t}$'s are called \emph{idiosyncratic components}. $K$
 is the number of factors.
\end{definition}

\begin{example}\label{eg:ENP}
 \label{equicor} The decomposition~\eqref{decomp} is a factor model such
 that $K=1$,   $\mu_i:=0$,
$\ell_{i1}=\sqrt{\rho}$,  
  $\psi_i:=1-\rho$, and both of $f_{t 1}$ and $e_{i t}$ are independent and normal.
\end{example}

 \begin{assumption}\label{ass:conv} $e_{it}$ $(i,t\ge1)$ are \emph{i.i.d.}, and
${\bm{\ell}}_i\to\bm{\ell}$ for some $K$-dimensional row vector $\bm{\ell}$. In this case, define
    \begin{align*}
   \sigma_\infty^2:=\lim_{i\to\infty}\var x_{i t}= \norm{\bm{\ell}}^2  +\psi_1<\infty,
  \qquad
 \rho:=\frac{\norm{\bm{\ell}}^2}{\norm{\bm{\ell}}^2  +\psi_1}.
    \end{align*}
\end{assumption}
Then, $1-\rho$ is the portion of the specific variance to the limiting total
variance $\sigma_\infty^2$ of a variable $x_{it}$ $(i\to\infty)$.

The following generalizes Proposition~\ref{prop:AH22}.
The LSD of a sample correlation matrix $\mathbf{C}$ generated from a factor model is  as follows:

\begin{theorem}\label{thm:equilsd2}  Given a factor model.
Suppose ${{N,T\to\infty,\ {T/N \to Q}}}\in(0,\,\infty)$ and Assumption~\ref{ass:conv}. 
 Then,  almost surely,
	$F^{\mathbf{C}}$ converges weakly to $\mathrm{MP}_{Q,\,1-\rho}$.
\end{theorem}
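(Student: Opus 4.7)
The strategy is to reduce the sample correlation matrix $\mathbf{C}$ to a rescaled sample covariance matrix of the iid idiosyncratic array, for which Proposition~\ref{prop:MP law}(\ref{assert:MP LT}) supplies the Mar\v{c}enko--Pastur limit. Let $\mathbf{L}\in{{\mathbb R}}^{N\times K}$ be the loading matrix with $i$-th row $\bm{\ell}_i$, $\mathbf{F}=[\bm{f}_1,\ldots,\bm{f}_T]\in{{\mathbb R}}^{K\times T}$, $\mathbf{E}=[e_{it}]\in{{\mathbb R}}^{N\times T}$, and $\mathbf{P}=I_T-T^{-1}\bm{1}\bm{1}^\top$ the row-mean projector on ${{\mathbb R}}^T$. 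Then $\mathbf{X}-\overline{\mathbf{X}}=(\mathbf{L}\mathbf{F}+\mathbf{E})\mathbf{P}$, so
\[
\mathbf{K}:=(\mathbf{X}-\overline{\mathbf{X}})(\mathbf{X}-\overline{\mathbf{X}})^\top=\mathbf{K}_{\mathrm{sig}}+\mathbf{E}\mathbf{P}\mathbf{E}^\top,
\]
where $\mathbf{K}_{\mathrm{sig}}=\mathbf{L}\mathbf{F}\mathbf{P}\mathbf{F}^\top\mathbf{L}^\top+\mathbf{L}\mathbf{F}\mathbf{P}\mathbf{E}^\top+\mathbf{E}\mathbf{P}\mathbf{F}^\top\mathbf{L}^\top$ has rank at most $3K$ and $\mathbf{E}\mathbf{P}\mathbf{E}^\top-\mathbf{E}\mathbf{E}^\top=-T^{-1}(\mathbf{E}\bm{1})(\mathbf{E}\bm{1})^\top$ has rank one. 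Since $K$ is fixed, the rank inequality for empirical spectral distributions forces $F^{\mathbf{K}/(T\sigma_\infty^2)}$ and $F^{\mathbf{E}\mathbf{E}^\top/(T\sigma_\infty^2)}$ to share a weak limit. Applying Proposition~\ref{prop:MP law}(\ref{assert:MP LT}) to $\mathbf{E}$ (iid entries, variance $\psi_1$) together with the scale identity $\mathrm{MP}_{Q,\sigma^2}(x)=\mathrm{MP}_Q(x/\sigma^2)$ stated in the introduction identifies that common limit as $\mathrm{MP}_{Q,\,\psi_1/\sigma_\infty^2}=\mathrm{MP}_{Q,\,1-\rho}$.

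To transfer this limit to $\mathbf{C}=\mathbf{D}^{-1}\mathbf{K}\mathbf{D}^{-1}$ with $\mathbf{D}=\diag(\norm{\bm{x}_i-\overline{\bm{x}}_i})_{i=1}^N$, set $\mathbf{D}_0=\sqrt{T\sigma_\infty^2}\,I_N$ and $\mathbf{M}=\mathbf{D}_0\mathbf{D}^{-1}$, so that $\mathbf{C}=\mathbf{M}\bigl(\mathbf{K}/(T\sigma_\infty^2)\bigr)\mathbf{M}$. The core analytic estimate is
\[
\norm{\mathbf{M}-I_N}_{\mathrm{op}}=\max_{1\le i\le N}\abs{\sqrt{T\sigma_\infty^2}/\norm{\bm{x}_i-\overline{\bm{x}}_i}-1}\ \mathrel{\stackrel{a.s.}{\to}}\ 0.
\]
Because $\norm{\mathbf{E}\mathbf{P}\mathbf{E}^\top/(T\sigma_\infty^2)}_{\mathrm{op}}$ is almost surely bounded (by the Mar\v{c}enko--Pastur edge), the above estimate makes $\mathbf{M}\mathbf{E}\mathbf{P}\mathbf{E}^\top\mathbf{M}/(T\sigma_\infty^2)$ differ from $\mathbf{E}\mathbf{P}\mathbf{E}^\top/(T\sigma_\infty^2)$ by a vanishing operator-norm perturbation, while $\mathbf{M}\mathbf{K}_{\mathrm{sig}}\mathbf{M}/(T\sigma_\infty^2)$ still has rank $\le 3K$. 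Weyl's inequality and the rank inequality then deliver $F^{\mathbf{C}}\mathrel{\stackrel{a.s.}{\to}}\mathrm{MP}_{Q,\,1-\rho}$.

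The main obstacle is the uniform row-norm estimate above. For each fixed $i$, the strong law applied to the iid sequences $\{\bm{f}_t\}_t$ and $\{e_{it}\}_t$ yields $\norm{\bm{x}_i-\overline{\bm{x}}_i}^2/T\to\norm{\bm{\ell}_i}^2+\psi_1$ almost surely, and Assumption~\ref{ass:conv} sends the latter to $\sigma_\infty^2$ as $i\to\infty$. Upgrading pointwise to uniform convergence over $i\in\{1,\ldots,N\}$ is not automatic under only $\Exp e_{11}^2<\infty$, so I would deploy the standard Mar\v{c}enko--Pastur truncation: replace each $e_{it}$ by its truncation at level $\delta_T\sqrt{T}$ with $\delta_T\downarrow 0$ slowly enough that $\sum_{i,t}\Prb(\abs{e_{it}}>\delta_T\sqrt{T})$ is summable (which follows from $\Exp e_{11}^2<\infty$). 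The LSD of $\mathbf{E}\mathbf{E}^\top/T$ is undisturbed, and the bounded truncated entries admit a Bernstein-type maximal inequality driving the centered sample variances to $\sigma_\infty^2$ uniformly in $i$, which completes the proof.
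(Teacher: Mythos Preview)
Your low-rank decomposition is exactly right and matches the paper: strip off $\mathbf{L}\mathbf{F}\mathbf{P}$ and the rank-one centering via the rank inequality, reducing to the iid idiosyncratic block $\mathbf{E}\mathbf{E}^\top/T$, whose LSD is $\mathrm{MP}_{Q,\psi_1}$ by Proposition~\ref{prop:MP law}.

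The gap is in the transfer to $\mathbf{C}$. Your ``core analytic estimate''
\[
\norm{\mathbf{M}-I_N}_{\mathrm{op}}=\max_{1\le i\le N}\abs*{\frac{\sqrt{T\sigma_\infty^2}}{\norm{\bm{x}_i-\overline{\bm{x}}_i}}-1}\ \mathrel{\stackrel{a.s.}{\to}}\ 0
\]
is false under Assumption~\ref{ass:conv}. You yourself compute that for each fixed $i$, $\norm{\bm{x}_i-\overline{\bm{x}}_i}^2/T\to\norm{\bm{\ell}_i}^2+\psi_1=\sigma_i^2$, not $\sigma_\infty^2$. The assumption only gives $\bm{\ell}_i\to\bm{\ell}$, so in general $\sigma_1^2\ne\sigma_\infty^2$ and the $i=1$ term alone pins $\norm{\mathbf{M}-I}_{\mathrm{op}}$ away from zero. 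No amount of truncation of the $e_{it}$'s fixes this: the obstruction is deterministic, coming from the loadings, not from tail behavior of the noise. A secondary issue is that your Weyl step needs $\snorm{\mathbf{E}\mathbf{E}^\top}/T$ almost surely bounded, which by Bai--Yin requires $\Exp e_{11}^4<\infty$; the factor model in Definition~\ref{def:factor model} only assumes a finite second moment.

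The paper avoids both problems by replacing the operator-norm/Weyl comparison with the trace-based L\'evy distance inequality (Bai's Lemma~2.7):
\[
\Le^4\bigl(F^{{\mathbf{Y}}{\mathbf{Y}}^\top},\,F^{\mathbf{E}\mathbf{E}^\top}\bigr)\ \le\ \frac{2}{N}\Tr\bigl({\mathbf{Y}}{\mathbf{Y}}^\top+\mathbf{E}\mathbf{E}^\top\bigr)\cdot\frac{1}{N}\Tr\bigl(({\mathbf{Y}}-\mathbf{E})({\mathbf{Y}}-\mathbf{E})^\top\bigr).
\]
Here the row-norm discrepancies enter only through an \emph{average} $N^{-1}\sum_i(\cdot)$ rather than a maximum, so the Ces\`aro convergence $\sigma_i^2\to\sigma_\infty^2$ is enough, and no spectral-norm bound on $\mathbf{E}\mathbf{E}^\top/T$ (hence no fourth moment) is needed. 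If you want to rescue the operator-norm route, you would have to normalize row $i$ by $\sqrt{T}\sigma_i$ rather than $\sqrt{T}\sigma_\infty$, and then separately argue that $\diag(\sigma_\infty/\sigma_i)\cdot(\mathbf{E}\mathbf{E}^\top/(T\sigma_\infty^2))\cdot\diag(\sigma_\infty/\sigma_i)$ has the same LSD as $\mathbf{E}\mathbf{E}^\top/(T\sigma_\infty^2)$; but at that point the trace inequality is both shorter and weaker in its moment requirements.
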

By Example~\ref{eg:ENP},
this generalizes Proposition~\ref{prop:AH22}.
Theorem~\ref{thm:equilsd2} is proved in Section~\ref{proof:equilsd2}.

\section{The largest eigenvalue of financial correlation matrix}\label{sec:real data analysis}
For a dataset of returns of $N$ S\&P500 stocks~(or other major markets)
for $T$ trading days, \cite{10.1142/S0219024900000255} fitted the histogram of the
eigenvalues of the correlation matrix $\mathbf{C}$, to the density function of
a scaled Mar\v{c}enko-Pastur distribution.

Naturally, the return of a stock is more correlated with the return of
a stock of the same industry classification sector, than with the return
of a stock of a different industry classification sector. We consider
the 11 \emph{global industry classification standard} (GICS) sectors of
S\&P500 stocks. For each GICS, for the dataset of the stock returns for the period
2012-01-04/2021-12-31,  the time series of the equi-correlation
coefficient is computed by GJR GARCH~\citep{10.2307/2329067} with the correlation
structure being \emph{dynamic equicorrelation}~\citep{de} (We abbreviate
this model to `GJR GARCH-DECO'). The program is due to \cite{candila21}.

However, the computation time of the time series of the equi-correlation
coefficient by GJR GARCH-DECO is large, although we
can quickly compute the estimator $\lambda_1(\mathbf{C})/N$ of the
equi-correlation coefficient by a power method. For each
dataset, we compare the time series to $\lambda_1(\mathbf{C})/N$ of the
correlation matrix $\mathbf{C}$.

\def\scl{.22}
\def\scl{.2}
\def\wdt{.32}
\begin{figure}[ht]\centering
   \begin{subfigure}{\wdt\textwidth}\centering
\includegraphics[scale=\scl]{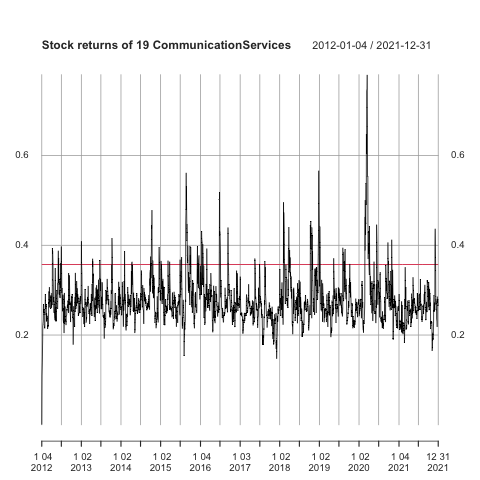}\end{subfigure}
   \hfill\begin{subfigure}{\wdt\textwidth}\centering
\includegraphics[scale=\scl]{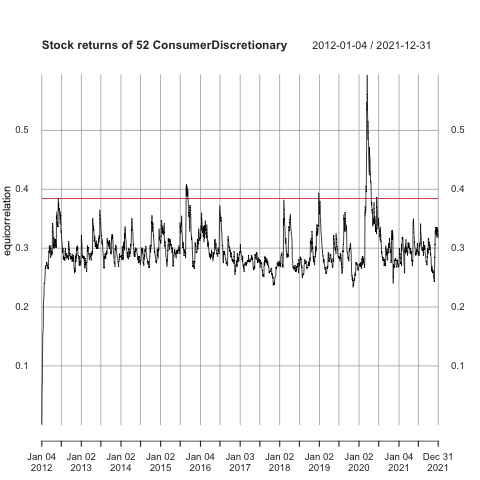}\end{subfigure}
   \hfill\begin{subfigure}{\wdt\textwidth}\centering
\includegraphics[scale=\scl]{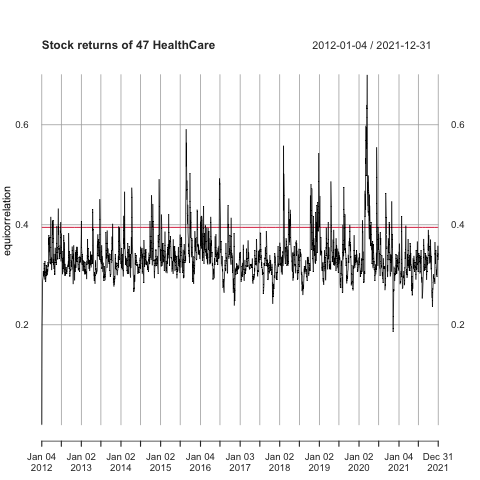}\end{subfigure}
  \hfill\begin{subfigure}{\wdt\textwidth}\centering
\includegraphics[scale=\scl]{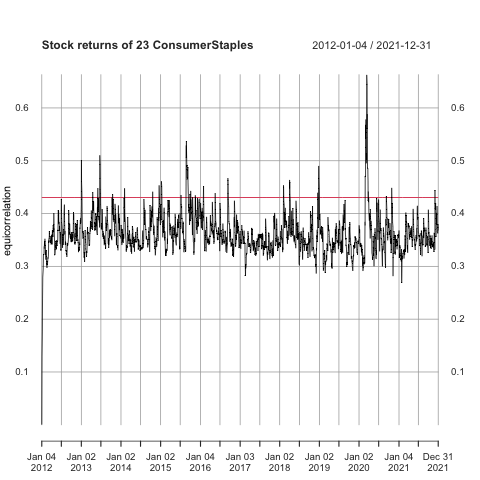}\end{subfigure}
  \hfill\begin{subfigure}{\wdt\textwidth}\centering  
\includegraphics[scale=\scl]{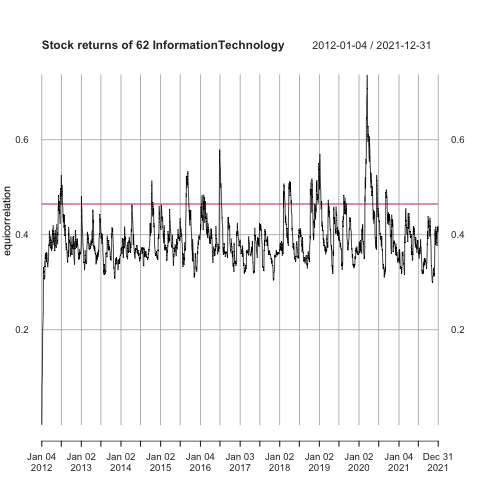}\end{subfigure}
  \hfill\begin{subfigure}{\wdt\textwidth}\centering
\includegraphics[scale=\scl]{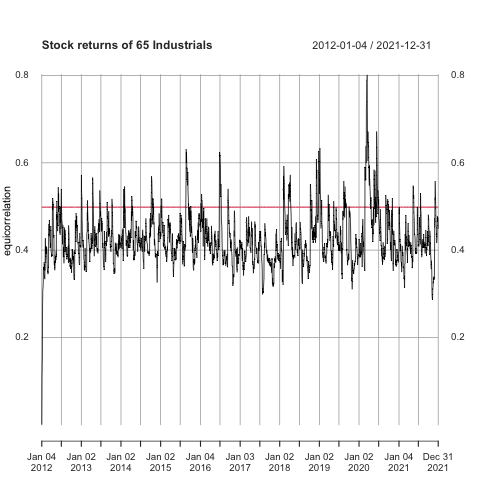}\end{subfigure}
  \hfill\begin{subfigure}{\wdt\textwidth}\centering  
\includegraphics[scale=\scl]{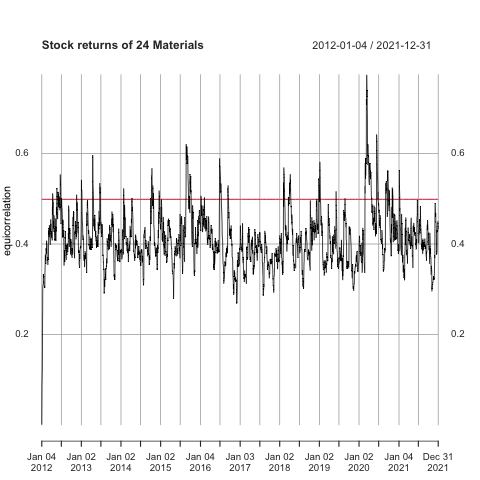}\end{subfigure}
  \hfill\begin{subfigure}{\wdt\textwidth}\centering  
\includegraphics[scale=\scl]{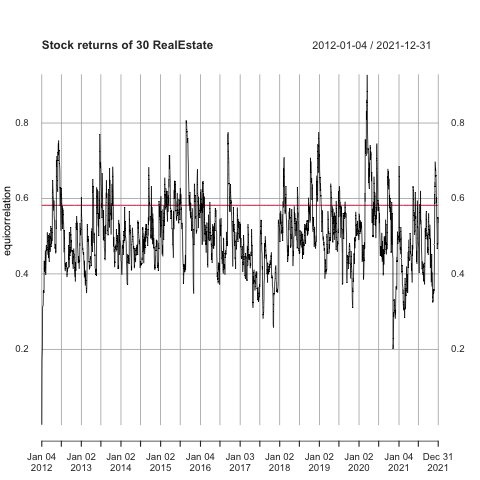}\end{subfigure}
  \hfill\begin{subfigure}{\wdt\textwidth}\centering
\includegraphics[scale=\scl]{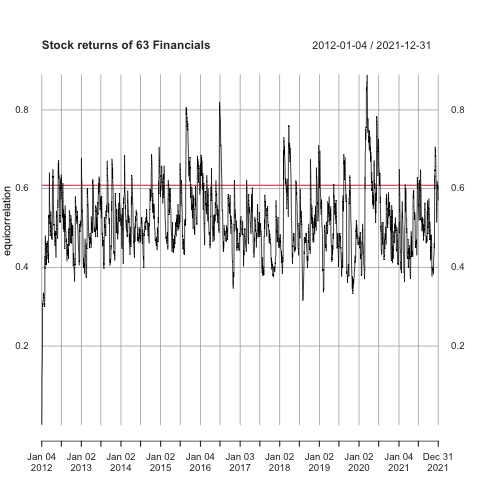}\end{subfigure}
  \hfill\begin{subfigure}{\wdt\textwidth}\centering
\includegraphics[scale=\scl]{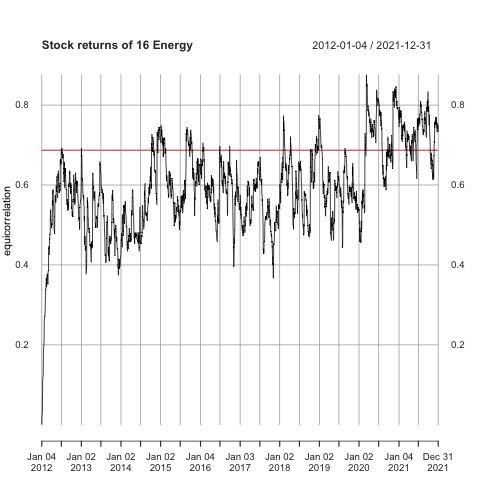}  \end{subfigure}
\hfill\begin{subfigure}{\wdt\textwidth}\centering  
\includegraphics[scale=\scl]{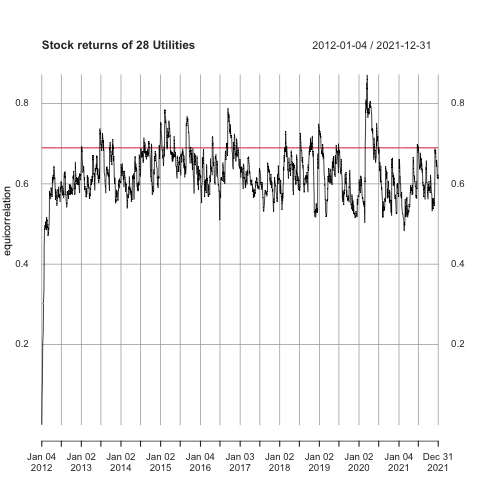}\end{subfigure}
\hfill\begin{subfigure}{\wdt\textwidth}\centering  
\includegraphics[scale=\scl]{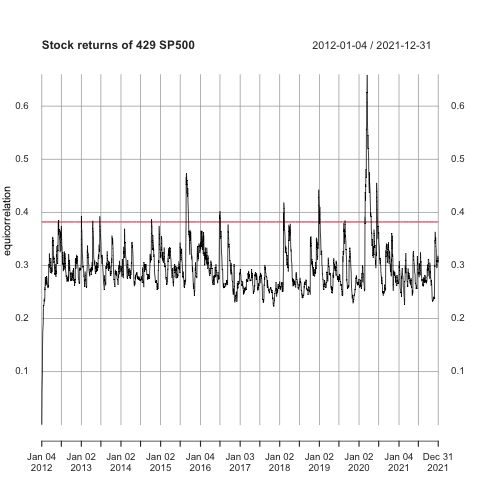}\end{subfigure}
  \caption{Time series of $\rho$ and $\lambda_1(\mathbf{C})/N$ (red line) of the stock returns
  of the 12 datasets of Table~\ref{tbl:gics}.\label{fig:12}}
\end{figure}
Figure~\ref{fig:12} is the time series of equi-correlations for the 12
 datasets. All the time series rise around March 2020, by the world-wide
 sudden crash after growing uncertainty due to the COVID-19 (Coronavirus
 disease 2019) pandemic.   The time series of the equi-correlation
 coefficient of Energy sector kept high, even after the  time 
 series of the equi-correlation of the other 10 sectors of GICS as well as the total
 decrease. According to Figure~\ref{fig:12}, for each dataset,
 the time series is mostly smaller than $\lambda_1(\mathbf{C})/N$. Let us
 consider the time average $\overline{\rho}$ of equi-correlation coefficients
 estimated by GJR GARCH-DECO.  Table~\ref{tbl:gics} is the list of GICS,
 $N/T $, $\lambda_1(\mathbf{C})/N$, $N$, and $\overline{\rho}$, for the period
 2012-01-04/2021-12-31~($T=2516$=trading days $- 1$).  The rows of
 Table~\ref{tbl:gics} are ordered in the increasing order of
 $\lambda_1(\mathbf{C})/N$.
 \def\commserv{.357}
\def\consumerdiscretionary{.384}
\def\healthcare{.394}
\def\consumerstaples{.430}
\def\informationtechnology{.464}
\def\industrials{.498}
\def\materials{.498}
\def\realestate{.581}
\def\financials{.608}
\def\energy{.687}
\def\utilities{.689}
\def\spfivehundred{.381}

\def\avcommserv{.276}
\def\avconsumerdiscretionary{.297}
\def\avconsumerstaples{.361}
\def\avenergy{.595}
\def\avfinancials{.516}
\def\avhealthcare{.334}
\def\avindustrials{.424}
\def\avinformationtechnology{.389}
\def\avmaterials{.410}
\def\avrealestate{.502}
\def\avutilities{.621}
\def\avspfivehundred{.293}

\def\Ccommserv{.0076 }\def\Cconsumerdiscretionary{.0207
}\def\Chealthcare{.0187 }\def\Cconsumerstaples{.0091
}\def\Cinformationtechnology{.0246 }\def\Cindustrials{.0258
}\def\Cmaterials{.0095 }\def\Crealestate{.0119
}\def\Cfinancials{.025 }\def\Cenergy{.0064 }\def\Cutilities{.0111 }\def\Cspfivehundred{.1705 }
\def\commserv{ .3571 }\def\consumerdiscretionary{ .3843
}\def\healthcare{ .3948 }\def\consumerstaples{ .4302
}\def\informationtechnology{ .4648 }\def\industrials{ .4985
}\def\materials{ .499 }\def\realestate{ .5819 }\def\financials{ .6086
}\def\energy{ .6872 }\def\utilities{ .6897 }\def\spfivehundred{ .3819 } 
\def\avcommserv{ .2764 }\def\avconsumerdiscretionary{ .2967 }\def\avhealthare{ .334 }\def\avconsumerstaples{ .3606 }\def\avinformationtechnology{ .3894 }\def\avindustrials{ .4241 }\def\avmaterials{ .4104 }\def\avrealestate{ .5016 }\def\avfinancials{ .5159 }\def\avenergy{ .5954 }\def\avutilities{ .621 }\def\avspfivehundred{ .2939 }
\begin{table}[ht]\centering
\begin{tabular}{ |c|c|l|c|c|c|}
\hline
No&GICS & $N/T $ & $\lambda_1(\mathbf{C})/N$  &$N$ &$\overline{\rho}$\\
\hline
1& Communication Services &\Ccommserv & \commserv &19 & \avcommserv\\
2&Consumer Discretionary &\Cconsumerdiscretionary  & \consumerdiscretionary &52  & \avconsumerdiscretionary\\  
3&Health Care 	&\Chealthcare & \healthcare	&47 & \avhealthcare\\
4&Consumer Staples   &\Cconsumerstaples	& \consumerstaples  &23 & \avconsumerstaples\\
5&Information Technology&\Cinformationtechnology & \informationtechnology & 62 & \avinformationtechnology\\
6&Industrials 	&\Cindustrials	& \industrials &65 & \avindustrials\\
7&Materials &\Cmaterials & \materials & 24 & \avmaterials\\
8&Real estate &\Crealestate & \realestate & 30 & \avrealestate\\
9&Financials 	&\Cfinancials	& \financials&	63 & \avfinancials\\
10&Energy 	&\Cenergy & \energy	&16 & \avenergy\\
11&Utilities &\Cutilities 	& \utilities &  28 & \avutilities\\
 \hline
 12&The total &\Cspfivehundred      & \spfivehundred  &429  &\avspfivehundred\\
 \hline
\end{tabular}
\caption{The stock return datasets of 11 GICS's and the total
 (2012-01-04/2021-12-31), the largest eigenvalue of the correlation
 matrices and the time averages of equicorrelation
 coefficient. $N/T$, $\lambda_1(\mathbf{C})/N$ and $\overline{\rho}$ are rounded.}
\label{tbl:gics}
\end{table}
The last row of Table~\ref{tbl:gics} is for the union dataset of the 11
GICS datasets.  The union dataset is more `heterogeneous' than the 11
GICS datasets. It has $\lambda_1(\mathbf{C})/N$ and $\overline{\rho}$ smaller than the
corresponding $\lambda_1(\mathbf{C})/N$ and $\overline{\rho}$ for most of GICS.  It
means that correlations coefficients between different GICS's are very
small. Figure~\ref{heatmap:gics} are the heat maps with hierarchical
clustering algorithms for the correlation matrices $\mathbf{C}$ of the 12
datasets. According to the last heat map of Figure~\ref{heatmap:gics},
the correlation matrix of the returns of S\&P500 stocks has diagonal
block structures as well as an almost dominant background.

\begin{figure}[ht]\centering
\begin{subfigure}{\wdt\textwidth}\centering
\includegraphics[scale=\scl]{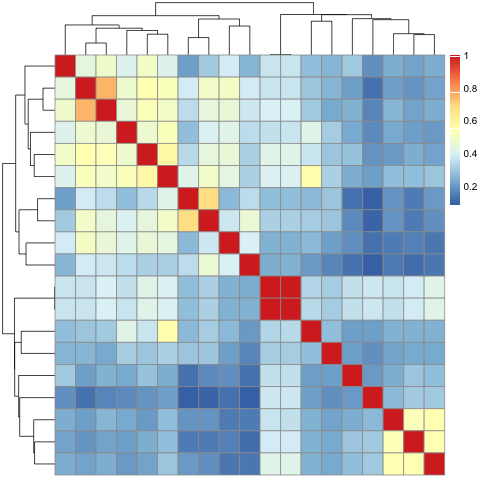}\caption{
 Communication
Services}\end{subfigure}\hfill
\begin{subfigure}{\wdt\textwidth}\centering
 \includegraphics[scale=\scl]{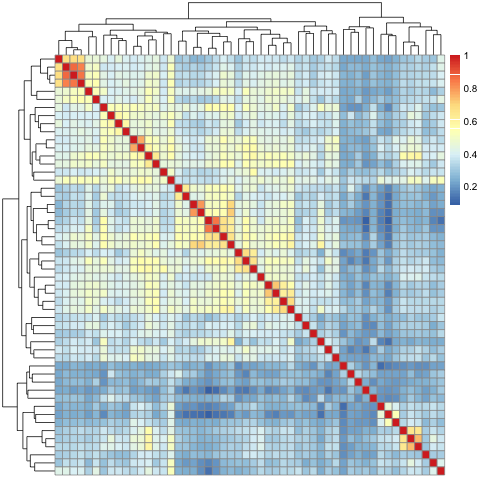}
 \caption{ Consumer Discretionary}
 \end{subfigure}\hfill
 \begin{subfigure}{\wdt\textwidth}\centering
\includegraphics[scale=\scl]{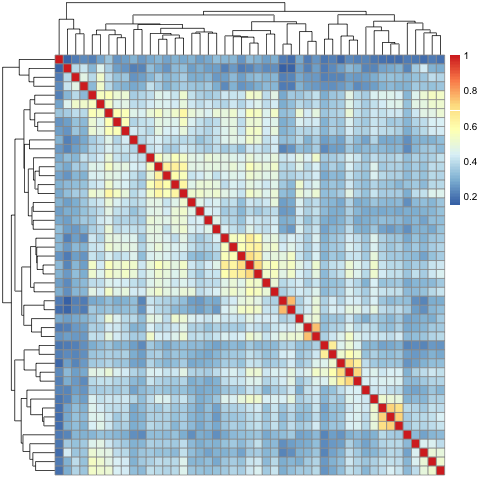}
  \caption{ Health Care}
 \end{subfigure}\hfill
\begin{subfigure}{\wdt\textwidth}\centering
 \includegraphics[scale=\scl]{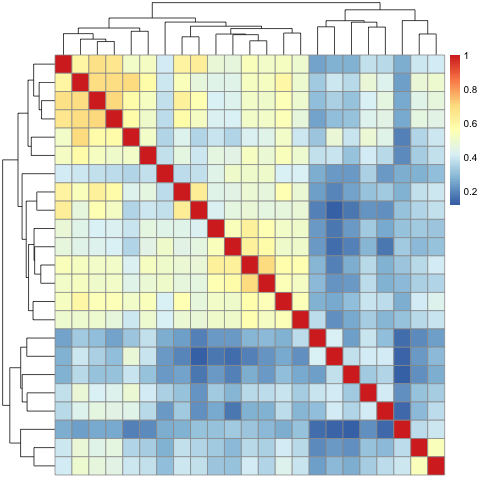}
\caption{ Consumer Staples}\end{subfigure}\hfill
\begin{subfigure}{\wdt\textwidth}\centering
\includegraphics[scale=\scl]{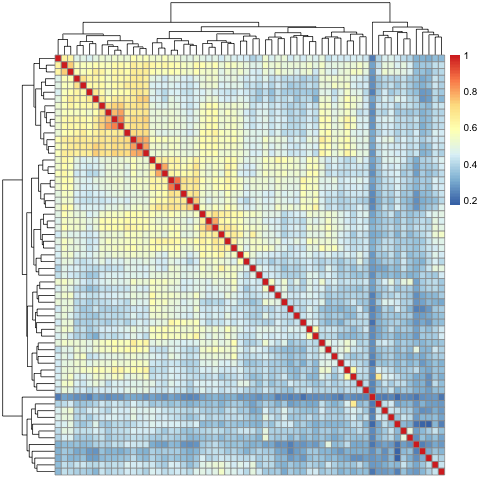}
\caption{ Information Technology}\end{subfigure}\hfill
\begin{subfigure}{\wdt\textwidth}\centering
\includegraphics[scale=\scl]{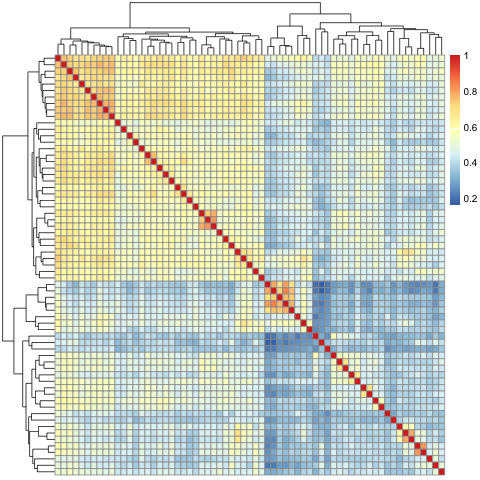}\caption{
 Industrials}
\end{subfigure}\hfill
\begin{subfigure}{\wdt\textwidth}\centering
 \includegraphics[scale=\scl]{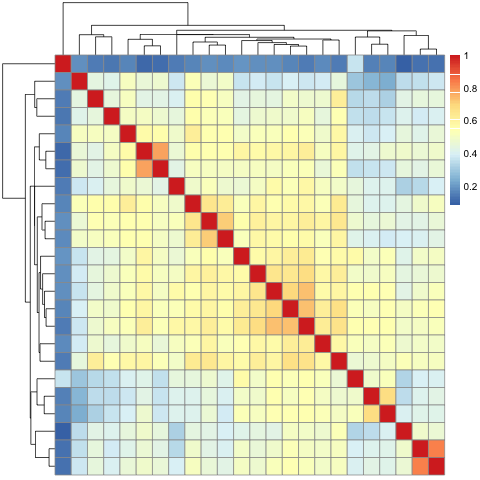}
\caption{ Materials}\end{subfigure}\hfill
\begin{subfigure}{\wdt\textwidth}\centering
 \includegraphics[scale=\scl]{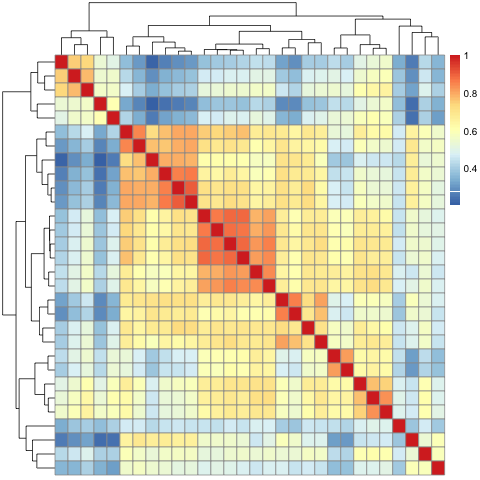}
\caption{ Real Estate}\end{subfigure}\hfill
 \begin{subfigure}{\wdt\textwidth}\centering
  \includegraphics[scale=\scl]{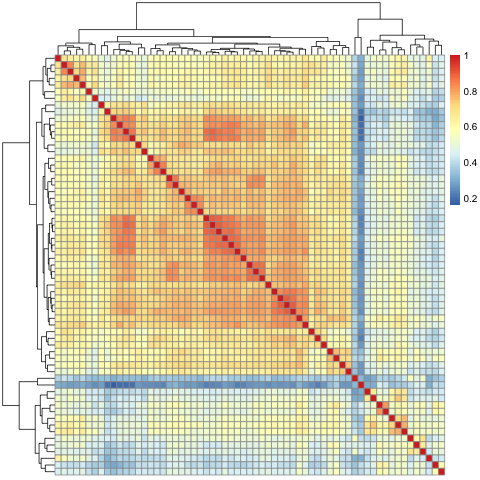}
  \caption{ Financials}
\end{subfigure}\hfill
\begin{subfigure}{\wdt\textwidth}\centering
\includegraphics[scale=\scl]{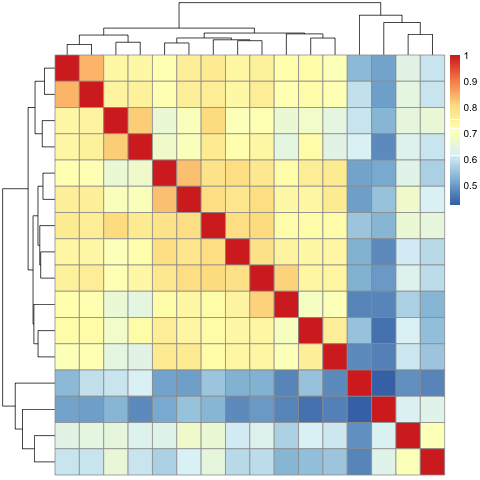}  \caption{ Energy}\end{subfigure}\hfill
\begin{subfigure}{\wdt\textwidth}\centering
\includegraphics[scale=\scl]{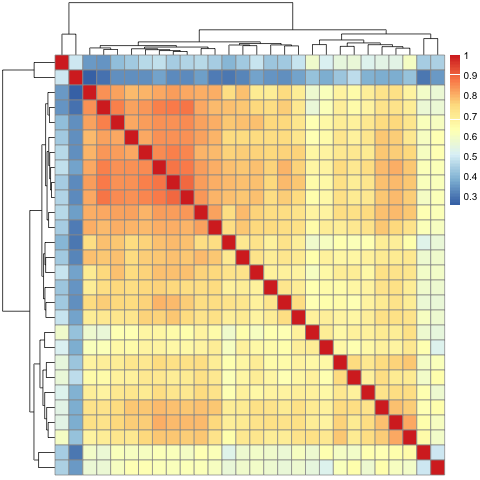}\caption{
Utilities}\end{subfigure}\hfill
\begin{subfigure}{\wdt\textwidth}\centering
 \includegraphics[scale=\scl]{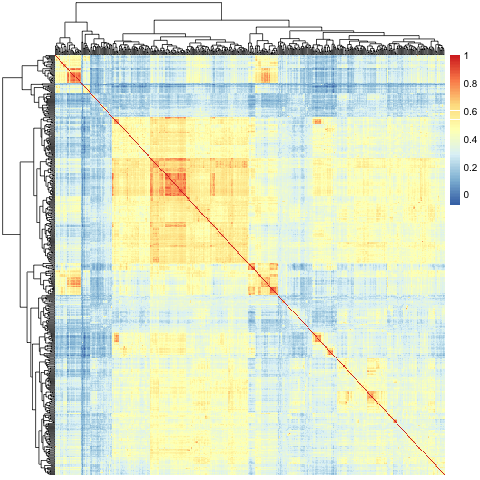}
 \caption{The total}
\end{subfigure}
    \caption{The correlation matrices of the stock returns of
 the 12 datasets of Table~\ref{tbl:gics} and the total.}
    \label{heatmap:gics}
\end{figure}

Although Figure~\ref{heatmap:gics} shows the various structures of $\mathbf{C}$
for the 12 datasets, $\lambda_1(\mathbf{C})/N$ predicts well the
time average $\overline\rho$ of the time series of equi-correlation coefficients computed
by GJR GARCH-DECO with much time. A regression
analysis~(Figure~\ref{fig:pred}) for Table~\ref{tbl:gics} shows:
\begin{align*}
 \overline{\rho}\sim 0.98488\frac{\lambda_1(\mathbf{C})}{N} -
0.07190\quad(\mbox{adjusted $R^2$ being 0.9916}).
\end{align*} 
\begin{figure}[ht]
    \centering
      \includegraphics[scale=.4]{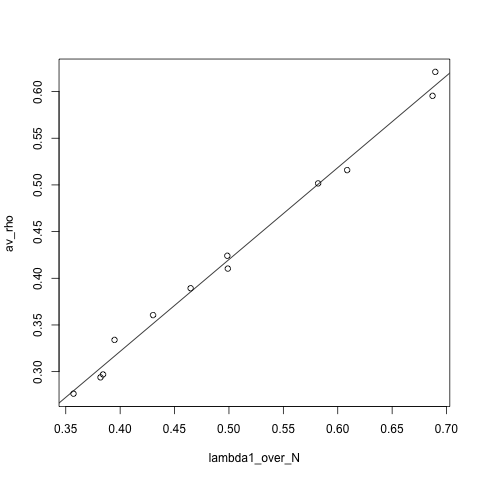}
    \caption{The regression analysis of $\overline\rho$ by
   $\lambda_1(\mathbf{C})/N$.\label{fig:pred}}
\end{figure}

\section{Phase transition of limiting distribution of  largest eigenvalue}\label{sec:BBP}

In our setting, all the correlation coefficients among $N$ variables
being the positive constant $\rho$. It is, however, somehow unnatural in
the limit $ N\to\infty.$ We assume that
\[
 N>T
\]
and  the equi-correlation coefficient
$\rho$  converges to 0 from above as $ N\to\infty$.

For this setting with $N>T$, the consistency $\lambda_1(\mathbf{S})/ N\to\rho$
of the largest eigenvalue $\lambda_1(\mathbf{S})$ and the asymptotic normality
of $\lambda_1(\mathbf{S})$ for a nonnormal population follows from
\cite{yata09:_pca_consis_non_gauss_data}. Moreover, the limiting
distribution of $\lambda_1(\mathbf{S})$ for an equi-correlated normal population
seems to have a phase transition depending on the equi-correlation
coefficient $\rho$, as the limiting distribution of the largest
eigenvalue of $\mathbf{S}$ formed by a spiked population model does so with
respect to the largest eigenvalue of the population covariance
matrix~\citep{baik05:_phase}.

\subsection{Consistency and asymptotic normality of $\lambda_1(\mathbf{S})$ of nonnormal population}
 In
  \citep{yata09:_pca_consis_non_gauss_data}, the consistency and the
  asymptotic normality of 
  $\lambda_1(\mathbf{S})$ are
  proved without assuming the normality of the population.

   \begin{proposition}\label{prop:yataaoshima2009}Let  a  data matrix ${\mathbf{X}}\in{{\mathbb R}}^{N\times T}$ be $[\bm{X}_1,\ldots,\bm{X}_T
	 ]$ where $\bm{X}_t$ $(1\le t\le  T)$ are centered, i.i.d.\@ and $N>T$.
    Suppose that the population covariance matrix ${\mathbf{\Sigma}}$ is $\mathbf{H}\diag(\lambda_1,\ldots,\lambda_N)\mathbf{H}^\top$
    for some orthogonal matrix $\mathbf{H}$, 
    \begin{align}
     &\lambda_1\ge\lambda_2\ge\cdots\ge\lambda_N\ge0,\nonumber\\
     &  \lambda_i = c_i  N^{\delta_i}\ (i=1,\ldots,m),\ \mbox{and}   \label{evya}\\
     &  \mbox{$\lambda_j$ are constants $(j=m+1,\ldots,N)$}.\nonumber
    \end{align}
   Here $m$,  $c_i(>0)$ and $\delta_i$\
    $(\delta_1\ge\cdots\ge\delta_m>0)$ are constants.
    
    Let $\mathbf{Z}:=[\bm{Z}_1,\ldots,\bm{Z}_N
	 ]:=\diag(\lambda_1,\ldots,\lambda_N)^{-1/2}\mathbf{H}^\top {\mathbf{X}}$.   Assume
	 that $\bm{Z}_1,\ldots,\bm{Z}_N$ are independent, the fourth
	 moments of the entries of $\mathbf{Z}$ are uniformly bounded, and no
	  $\bm{Z}_i$ is $\bm{0}$. Moreover suppose
   \begin{enumerate}
    \item[(i)] $N,T\to\infty$ for $i$ such that $\delta_i>1$; and
    \item[(ii)] $N,T\to\infty$ and $ N^{2-2\delta_i}/T\to0$ for $i$ such
	  that $\delta_i\in(0,\,1]$.
   \end{enumerate}
   Then, we have
   \begin{enumerate}
    \item $\lambda_1(\mathbf{S})/\lambda_1=1+o_p(1)$. \cite[Theorem~1]{yata09:_pca_consis_non_gauss_data}
    \item  \label{yata:thm2} Assume also that the largest $m$ eigenvalues
	  of ${\mathbf{\Sigma}}$ are distinct and $\var(z_{ij}^2)=M_i\ (<\infty)$.
	  Then, $\sqrt{T/M_i}(\lambda_1(\mathbf{S})/\lambda_1-1)\mathrel{\stackrel{D}{\to}}
	   \mathrm{N}(0,\,1)$. \cite[Theorem~2]{yata09:_pca_consis_non_gauss_data}
   \end{enumerate}
   \end{proposition}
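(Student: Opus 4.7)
Since $N>T$, I would pass to the $T\times T$ dual matrix $\mathbf{S}_D:=T^{-1}{\mathbf{X}}^\top{\mathbf{X}}$, which shares every nonzero eigenvalue with the rank-deficient $\mathbf{S}$.  Using ${\mathbf{X}}=\mathbf{H}\diag(\sqrt{\lambda_1},\ldots,\sqrt{\lambda_N})\mathbf{Z}$ and writing $\bm{z}_i$ for the $i$-th row of $\mathbf{Z}$, one has $\mathbf{S}_D=T^{-1}\sum_{i=1}^N\lambda_i\bm{z}_i^\top\bm{z}_i=\mathbf{A}+\mathbf{B}$ with the spike part $\mathbf{A}:=T^{-1}\sum_{i=1}^m\lambda_i\bm{z}_i^\top\bm{z}_i$ of rank at most $m$ and the bulk $\mathbf{B}:=T^{-1}\sum_{i>m}\lambda_i\bm{z}_i^\top\bm{z}_i$.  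The uniform fourth-moment bound on the entries of $\mathbf{Z}$ and the independence of its rows yield $\|\bm{z}_i\|^2/T\to 1$ and $T^{-1}\bm{z}_i^\top\bm{z}_j\to 0$ in probability for $i\ne j$ by the law of large numbers, so the ordered nonzero eigenvalues of $\mathbf{A}$ are $\lambda_i(1+o_p(1))$ for $1\le i\le m$.

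\textbf{Consistency.}  Factor the bulk as $\mathbf{B}=T^{-1}\mathbf{Z}_b^\top\diag(\lambda_{m+1},\ldots,\lambda_N)\mathbf{Z}_b$ with $\mathbf{Z}_b\in{{\mathbb R}}^{(N-m)\times T}$ carrying i.i.d.\@ rows of bounded fourth moments; a Bai-Yin-type operator-norm estimate then gives $\lambda_1(\mathbf{B})=O_p(N/T)$ (using $N>T$).  Weyl's inequality yields $|\lambda_1(\mathbf{S}_D)-\lambda_1(\mathbf{A})|\le\lambda_1(\mathbf{B})$, hence $\lambda_1(\mathbf{S})/\lambda_1=1+O_p\bigl(N^{1-\delta_1}/T\bigr)$.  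In case (i), $\delta_1>1$ forces $N^{1-\delta_1}\to 0$ and consistency is automatic; in case (ii), $N^{1-\delta_1}/T\to 0$ is implied by the stated hypothesis $N^{2-2\delta_1}/T\to 0$.  The same argument applies to each $\lambda_i$, $1\le i\le m$, after peeling off the top $i-1$ spikes.

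\textbf{Asymptotic normality and main obstacle.}  With the top-$m$ eigenvalues distinct, a Rayleigh-quotient expansion around the unit vector $\bm{v}_1\approx\bm{z}_1^\top/\|\bm{z}_1\|$ of $\mathbf{A}$ sharpens the previous estimate to $\lambda_1(\mathbf{S})/\lambda_1=\|\bm{z}_1\|^2/T+O_p\bigl(N/(T\lambda_1)\bigr)$, the cross-spike contributions being suppressed by the separations $|\lambda_1-\lambda_j|$, which are of order $\lambda_1$.  Since $\|\bm{z}_1\|^2-T=\sum_{t=1}^T(z_{1t}^2-1)$ is a sum of centered i.i.d.\@ summands of variance $M_1$, the classical CLT gives $\sqrt{T/M_1}\bigl(\|\bm{z}_1\|^2/T-1\bigr)\stackrel{D}{\to}\mathrm{N}(0,1)$, while the residual $\sqrt{T/M_1}\cdot N/(T\lambda_1)=O_p\bigl(N^{1-\delta_1}/\sqrt{T}\bigr)$ is $o_p(1)$ precisely under $N^{2-2\delta_1}/T\to 0$.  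The main obstacle is that the bulk operator-norm bound must be obtained under the mere hypothesis of uniformly bounded fourth moments rather than Gaussianity; once that Bai-Yin-type estimate is in hand, the delicate point is that the stronger condition $N^{2-2\delta_i}/T\to 0$ (equivalent to $N^{1-\delta_i}/\sqrt{T}\to 0$), and not merely the $N^{1-\delta_i}/T\to 0$ that would suffice for consistency alone, is precisely what upgrades the conclusion to asymptotic normality.
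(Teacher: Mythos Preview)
The paper does not prove this proposition: it is quoted verbatim as Theorems~1 and~2 of Yata--Aoshima (2009), with the citations built into the statement itself, and no proof is supplied. There is therefore nothing in the paper to compare your argument against.

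That said, your sketch follows the standard line of the cited Yata--Aoshima paper: pass to the $T\times T$ dual matrix, split into spike plus bulk, control the spike part through the near-orthonormality of the rows $\bm{z}_i$, and bound the bulk. Two small remarks. First, your appeal to a ``Bai--Yin-type operator-norm estimate'' for $\mathbf{B}$ is not quite licensed by the hypotheses: the entries of $\mathbf{Z}_b$ need not be i.i.d.\ (only the rows are independent, and within each row the entries are i.i.d.\ because the columns $\bm{X}_t$ are i.i.d., but the row distributions may differ). The cheap fix, which is in fact what Yata--Aoshima use, is the trace bound $\lambda_1(\mathbf{B})\le\Tr\mathbf{B}=T^{-1}\sum_{i>m}\lambda_i\|\bm{z}_i\|^2=O_p(N/T)$, which needs only second moments. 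Second, in the normality step your error term $O_p(N/(T\lambda_1))$ from the bulk is correct, but the cross-spike contribution coming from $T^{-1}\bm{z}_1\bm{z}_j^\top=O_p(T^{-1/2})$ for $j\le m$ also has to be shown to be $o_p(T^{-1/2})$ after division by $\lambda_1$; this uses the distinctness of the top eigenvalues and the growth $\lambda_1\to\infty$, which you allude to but do not spell out.
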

The condition (ii) is relieved for
\cite[Theorem~2]{yata09:_pca_consis_non_gauss_data} if an independence
condition is further assumed.
   \begin{proposition}[\protect{\cite[Corollary~1]{yata09:_pca_consis_non_gauss_data}}]
 In the assertion~\eqref{yata:thm2} of
    Proposition~\ref{prop:yataaoshima2009}, assume further that
    $z_{ij}$, $1\le i\le N (1\le t\le  T)$ are independent. Then, it
    holds that $\lambda_1(\mathbf{S})/\lambda_1=1+o_p(1)$ if the condition (i)
    and
    \begin{quote}
    (iii) \ $N,T\to\infty$ and $ N^{1-2\delta_i}/T\to0$ for $i$
    such that $\delta_i\in(0,\,1]$.   \end{quote}
   \end{proposition}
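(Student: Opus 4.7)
Since $N>T$, the nonzero eigenvalues of $\mathbf{S}=T^{-1}\mathbf{X}\mathbf{X}^\top$ coincide with those of the dual $T\times T$ Gram matrix $T^{-1}\mathbf{X}^\top\mathbf{X}$, which I would analyse instead. Writing $\mathbf{X}=\mathbf{H}\diag(\lambda_1,\ldots,\lambda_N)^{1/2}\mathbf{Z}$ and letting $\bm{z}_i\in\mathbb{R}^T$ denote the $i$-th row of $\mathbf{Z}$, one has the rank-one-plus-remainder decomposition
\begin{align*}
 T^{-1}\mathbf{X}^\top\mathbf{X}\;=\;T^{-1}\lambda_1\bm{z}_1\bm{z}_1^\top+\mathbf{R},\qquad \mathbf{R}:=T^{-1}\sum_{i=2}^N\lambda_i\bm{z}_i\bm{z}_i^\top.
\end{align*}
A lower bound on $\lambda_1(\mathbf{S})$ follows by plugging the test vector $\bm{z}_1/\|\bm{z}_1\|$ into the Rayleigh quotient, and Weyl's inequality gives the matching upper bound $\lambda_1(\mathbf{S})\le T^{-1}\lambda_1\|\bm{z}_1\|^2+\|\mathbf{R}\|$. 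The claim $\lambda_1(\mathbf{S})/\lambda_1=1+o_p(1)$ therefore reduces to three ingredients: (a) $T^{-1}\|\bm{z}_1\|^2=1+o_p(1)$; (b) control of the quadratic form $T^{-1}\bm{z}_1^\top\mathbf{R}\bm{z}_1/\|\bm{z}_1\|^2$; (c) a bound $\|\mathbf{R}\|=o_p(\lambda_1)$.

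Step~(a) is a routine weak law of large numbers for the $T$ columnwise-independent coordinates of $\bm{z}_1$ under the uniform fourth-moment bound. Step~(b) is handled by conditioning on $\bm{z}_1$: since $\bm{z}_1$ is independent of every $\bm{z}_i$ with $i\ne 1$, the inner products $\bm{z}_1^\top\bm{z}_i$ have conditional mean zero and conditional variance $\|\bm{z}_1\|^2\,\Exp z_{i1}^2$, giving $\Exp\big[(\bm{z}_1^\top\bm{z}_i)^2\mid\bm{z}_1\big]=O(T)$ and therefore a mean-square estimate on the quadratic form that, divided by $\lambda_1$, vanishes under condition~(iii). Step~(c) is the heart of the argument, and it is here that the \emph{entrywise} independence of $\{z_{it}\}$---strictly stronger than the rowwise-plus-columnwise structure already used---does the essential work: the bulk part of $\mathbf{R}$, namely $T^{-1}\mathbf{Z}_{\mathrm{bulk}}^\top\diag(\lambda_{m+1},\ldots,\lambda_N)\mathbf{Z}_{\mathrm{bulk}}$ with $(N-m)\times T$ matrix $\mathbf{Z}_{\mathrm{bulk}}$, is now a genuine sample covariance-type object whose operator norm admits a sharp Marchenko--Pastur-edge bound. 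This sharper bound is exactly the mechanism that relaxes the rate $N^{2-2\delta_i}/T\to 0$ of condition~(ii) to the weaker condition~(iii) $N^{1-2\delta_i}/T\to 0$, whereas the crude Frobenius estimate $\|\mathbf{R}\|^2\le\Tr(\mathbf{R}^2)$ alone would only recover condition~(ii).

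The main obstacle is the joint bookkeeping across the several growth regimes $\delta_1\ge\cdots\ge\delta_m$ of the spikes together with the constant-order bulk: one must combine the quadratic-form estimate from~(b) at the leading spike with the operator-norm estimate from~(c) on the bulk and on the subdominant spikes, and verify that each contribution, once divided by $\lambda_1^2=c_1^2N^{2\delta_1}$, vanishes precisely under condition~(iii) for each index $i$ with $\delta_i\in(0,1]$, while condition~(i) automatically dispatches those indices with $\delta_i>1$. A subsidiary subtlety is the case of tied leading exponents $\delta_1=\delta_2$, where the rank-one isolation of $\bm{z}_1\bm{z}_1^\top$ must first be replaced by a Weyl separation of the joint top spike block before $\lambda_1(\mathbf{S})$ can be extracted. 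Assuming this accounting is carried out, Markov's inequality converts the mean-square bound into the required $o_p(1)$, and the two-sided sandwich around $T^{-1}\lambda_1\|\bm{z}_1\|^2$ yields $\lambda_1(\mathbf{S})/\lambda_1=1+o_p(1)$.
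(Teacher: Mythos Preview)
The paper does not prove this proposition at all: it is quoted as \cite[Corollary~1]{yata09:_pca_consis_non_gauss_data} and used as a black box, so there is no in-paper argument to compare your plan against.

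On the merits of your plan, the step you single out as decisive contains a concrete miscount. You assert that the sharp Mar\v{c}enko--Pastur edge bound on the bulk, $\|\mathbf{R}_{\mathrm{bulk}}\|=O_p\big((1+\sqrt{N/T})^2\big)$, is ``exactly the mechanism'' that relaxes condition~(ii) to condition~(iii). It is not. Since $N>T$ here, that edge bound gives $\|\mathbf{R}_{\mathrm{bulk}}\|=O_p(N/T)$; dividing by $\lambda_1=c_1N^{\delta_1}$ yields $\|\mathbf{R}\|/\lambda_1=O_p(N^{1-\delta_1}/T)$, so the Weyl upper bound $\lambda_1(\mathbf{S})\le T^{-1}\lambda_1\|\bm{z}_1\|^2+\|\mathbf{R}\|$ requires $N^{1-\delta_1}/T\to0$, which for $\delta_1<1$ is \emph{strictly stronger} than condition~(iii). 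The same arithmetic afflicts your step~(b): the conditional mean of $\bm{z}_1^\top\mathbf{R}\bm{z}_1/\|\bm{z}_1\|^2$ is $T^{-1}\sum_{i\ge2}\lambda_i\asymp N/T$, so dividing by $\lambda_1$ again produces $N^{1-\delta_1}/T$, not $N^{1-2\delta_1}/T$.

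This is not repairable by tighter bookkeeping. Take $m=1$, $\delta_1=1/2$, $T=N^{1/4}$, bulk eigenvalues $\lambda_j\equiv1$, Gaussian entries. Then $N^{1-2\delta_1}/T=1/T\to0$, so condition~(iii) holds; yet $\|\mathbf{R}\|\asymp N/T=N^{3/4}$, and since both the rank-one spike term and $\mathbf{R}$ are positive semidefinite one has $\lambda_1(\mathbf{S})\ge\lambda_1(\mathbf{R})\asymp N^{3/4}\gg N^{1/2}=\lambda_1$. So $\lambda_1(\mathbf{S})/\lambda_1\to\infty$, and no sandwich around $T^{-1}\lambda_1\|\bm{z}_1\|^2$ can close. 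Either a qualitatively different upper-bound mechanism is needed---one that does not pass through $\|\mathbf{R}\|$---or, more plausibly, the proposition as transcribed here drops or misstates a hypothesis from the original Yata--Aoshima corollary (note also that the paper itself introduces this proposition as a relaxation of \emph{Theorem~2}, the asymptotic-normality statement, yet writes the \emph{Theorem~1} consistency conclusion). You should consult the source directly before investing further effort in this route.
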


By \eqref{ev}, ${\mathbf{\Sigma}}_\rho$ does not meet the condition~\eqref{evya}. According to Kazuyoshi
   Yata, the conclusions of \cite[Theorem~1]{yata09:_pca_consis_non_gauss_data} and
   \cite[Theorem~2]{yata09:_pca_consis_non_gauss_data} indeed hold for
   ${\mathbf{\Sigma}}={\mathbf{\Sigma}}_\rho$. Therefore, we get the following:

  \begin{theorem}\label{thm:yataaoshima}
Suppose that a population is possibly nonnormal, the population
   covariance matrix is ${\mathbf{\Sigma}}_\rho$ $(\rho>0)$, and
  $N,T\to\infty, {T/N \to Q}\in (0,\,\infty)$. Assume 
   \begin{enumerate}
    \item
 the setting of
	  \cite[Theorem~1]{yata09:_pca_consis_non_gauss_data}  and
 $\lim_{ N\to\infty} N^{1/2}\rho=\infty$; or
	 
    \item \label{nrho} the setting of
	  \cite[Corollary~1]{yata09:_pca_consis_non_gauss_data} and
	   $\lim_{ N\to\infty}N\rho=\infty$.
    \end{enumerate}
Then, we have
	 ${\lambda_1(\mathbf{S})}/{N}\mathrel{\stackrel{P}{\to}}\rho$ and the asymptotic normality of
	  $\lambda_1(\mathbf{S})$.
  \end{theorem}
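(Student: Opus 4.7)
The plan is to identify the equi-correlation spectrum with the Yata--Aoshima spiked regime~\eqref{evya} and then directly invoke Proposition~\ref{prop:yataaoshima2009} and its Corollary~1. Since ${\mathbf{\Sigma}}_\rho$ has one ``spike'' $\lambda_1({\mathbf{\Sigma}}_\rho) = (N-1)\rho + 1$ of order $N\rho$ and bulk eigenvalues $\lambda_2 = \cdots = \lambda_N = 1-\rho$ bounded in $(0,1]$, the role of the single spike ($m=1$) is played by $\lambda_1({\mathbf{\Sigma}}_\rho)$. The data-matrix hypotheses on $\mathbf{Z}$ (independence, fourth moments uniformly bounded, nonzero columns, and $\var(z_{ij}^2)=M_i$) are inherited verbatim from ``the setting of \cite[Theorem~1]{yata09:_pca_consis_non_gauss_data}'' in the statement of Theorem~\ref{thm:yataaoshima}, so they need not be rechecked.

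Next, I would translate the Yata--Aoshima growth hypotheses. Writing $\lambda_1({\mathbf{\Sigma}}_\rho) = N\rho(1+o(1))$ whenever $N\rho \to \infty$, the condition $N^{2-2\delta_1}/T \to 0$ becomes (up to constants) $N^2/(T\lambda_1^2) \to 0$, i.e.\ $T\rho^2 \to \infty$; under $T/N \to Q \in (0,\infty)$ this is exactly $N^{1/2}\rho \to \infty$, which is hypothesis~(1). Likewise the relaxed condition $N^{1-2\delta_1}/T \to 0$ of \cite[Corollary~1]{yata09:_pca_consis_non_gauss_data} becomes $NT\rho^2 \to \infty$, which in our regime is $N\rho \to \infty$, matching hypothesis~(2). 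Applying the corresponding Yata--Aoshima statement with $m=1$ then yields $\lambda_1(\mathbf{S})/\lambda_1({\mathbf{\Sigma}}_\rho) = 1 + o_p(1)$ together with $\sqrt{T/M_1}\bigl(\lambda_1(\mathbf{S})/\lambda_1({\mathbf{\Sigma}}_\rho) - 1\bigr) \stackrel{D}{\to} \mathrm{N}(0,1)$.

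Finally, since $\lambda_1({\mathbf{\Sigma}}_\rho)/N = \rho(N-1)/N + 1/N = \rho(1+o(1))$ whenever $N\rho \to \infty$, an application of Slutsky's theorem converts the consistency into $\lambda_1(\mathbf{S})/N \stackrel{P}{\to} \rho$ (interpreted, when $\rho = \rho_N$ decays, as $\lambda_1(\mathbf{S})/(N\rho_N) \to 1$), and transfers the CLT to the claimed asymptotic normality of $\lambda_1(\mathbf{S})$ after rescaling by $\lambda_1({\mathbf{\Sigma}}_\rho)$.

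The main obstacle is exactly the issue already flagged in the paragraph preceding the theorem: ${\mathbf{\Sigma}}_\rho$ does not literally meet~\eqref{evya}, because the spike $(N-1)\rho + 1$ is not of the pure form $c_1 N^{\delta_1}$ with constants (its shape depends on $\rho_N$) and the bulk eigenvalues $1-\rho$ are not literally constants. I would rely on K.~Yata's confirmation that the proofs of \cite[Theorems~1--2]{yata09:_pca_consis_non_gauss_data} carry through for ${\mathbf{\Sigma}} = {\mathbf{\Sigma}}_\rho$. Absent that confirmation, the substantive work would be to revisit the operator-norm and covariance-concentration estimates underlying those theorems and verify that the population spectrum enters only through $\lambda_1 \to \infty$, $\max_{j \geq 2}\lambda_j$ bounded, and the translated divergence conditions on $(N,T,\rho)$ derived above; the remainder of the argument, being entirely about the standardized data $\mathbf{Z}$, is unaffected.
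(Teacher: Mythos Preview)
Your proposal is correct and follows essentially the same route as the paper: the paper's entire argument is the paragraph immediately preceding the theorem, which notes that ${\mathbf{\Sigma}}_\rho$ fails the literal form~\eqref{evya} but that, per K.~Yata's personal confirmation, the conclusions of \cite[Theorems~1--2]{yata09:_pca_consis_non_gauss_data} nonetheless hold for ${\mathbf{\Sigma}}={\mathbf{\Sigma}}_\rho$, and then simply states the theorem. Your write-up is more explicit than the paper's in translating the growth conditions (ii) and (iii) into $N^{1/2}\rho\to\infty$ and $N\rho\to\infty$, and in invoking Slutsky to pass from $\lambda_1(\mathbf{S})/\lambda_1({\mathbf{\Sigma}}_\rho)$ to $\lambda_1(\mathbf{S})/N$, but the underlying argument and the reliance on Yata's confirmation are the same.
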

  
\subsection{Phase transition of limiting distribution of
  $\lambda_1(\mathbf{S})$ for normal population as to decay rate of equi-correlation
  coefficient}

  The limiting distribution of an appropriate scaling of a translation of the largest eigenvalue $\lambda_1(\mathbf{S})$ of a sample
  covariance matrix $\mathbf{S}$ is Tracy-Widom distribution if the
  equi-correlation coefficient $\rho$ is 0, by
  \citep{10.1214/aos/1009210544,Soshnikov2002},  and a normal
  distribution if $\rho$ is a positive constant by
  Theorem~\ref{thm:paraphrased main theorem}. In view of
  Theorem~\ref{thm:yataaoshima}~\eqref{nrho},
  we turn our attention to the case $\lim_{ N\to\infty}N\rho<\infty$.

 First, we recall \cite[Theorem~1.1 and Corollary~1.1]{baik05:_phase}.
	   
  \begin{proposition}Assume $Q>1$ and the population is
   $\mathrm{N}_N(\bm{\mu},\, {\mathbf{\Sigma}})$, and  $\lambda_1\ge
	   \lambda_2\ge\lambda_{r}\ge\lambda_{r+1}=\cdots= \lambda_N=1$ be the
	   eigenvalues of ${\mathbf{\Sigma}}$. 

\begin{itemize}
\item[(a)] Suppose  $0\leq k\leq r$, 
$\lambda_1 =\cdots= \lambda_k = 1 + 1/\sqrt{Q}$, and $\set{\lambda_{k+1},\ldots,\lambda_r}\subset (0,\, 1+ 1/\sqrt{Q})$. Then,
 \begin{align}
  &\lambda_1(\mathbf{S})\mathrel{\stackrel{P}{\to}} {(1+\sqrt{1/Q})^2},\ \mbox{and}\nonumber
\\
& \Prb\left(\left(\lambda_1(\mathbf{S})-{(1+\sqrt{1/Q})^2}\right)\cdot\frac{\sqrt{Q}}{(1+
 \sqrt{Q})^{4/3}} T^{2/3}\leq x\right)\to F_{\mathrm{TW},k}(x),\label{asymptotic normality}
 \end{align}
where $F_{\mathrm{TW},k}(x)$ is the distribution function of a generalization of the Tracy-Widom distribution~\cite[Definition~1.1]{baik05:_phase}.

\item[(b)]
 Suppose $1\leq k\leq r$, $\lambda_1 =\cdots= \lambda_k > 1 + 1/\sqrt{Q}$, and
$\set{\lambda_{k+1},\ldots,\lambda_r}\subset
	  (0,\lambda_1)$. Then,
\begin{align}
&\lambda_1(\mathbf{S})\stackrel{P}{\to}\lambda_1+\frac{Q^{-1}\lambda_1}{\lambda_1-1},\ \mbox{and}\nonumber
 \\
&
 \label{TW}\Prb\left(\left(\lambda_1(\mathbf{S})-\left(\lambda_1+\frac{Q^{-1} \lambda_1}{\lambda_1-1}\right)\right)\cdot\frac{\sqrt{T}}{\sqrt{\lambda_1^2-Q^{-1} \lambda_1^2/(\lambda_1-1)^2}}\leq x\right)\to F_{\mathrm{G},k}(x),
\end{align}
where $F_{\mathrm{G},k}(x)$ is the distribution function of a generalization of the Gaussian distribution~\cite[Definition~1.2]{baik05:_phase}.
\end{itemize}
\end{proposition}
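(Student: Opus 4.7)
The stated proposition is the Baik--Ben Arous--Péché (BBP) phase transition for spiked sample covariance matrices, and my plan is to follow the approach pioneered in that paper, specializing to the Wishart setting afforded by the Gaussian assumption. First, since $\bm{X}_t$ are i.i.d.\ $\mathrm{N}_N(\bm{\mu},{\mathbf{\Sigma}})$, I would subtract the mean and reduce to the case $\bm{\mu}=\bm{0}$, after which $T\mathbf{S}$ is a Wishart matrix with parameter ${\mathbf{\Sigma}}$. The joint density of its eigenvalues, obtained via the hypergeometric function of matrix argument (or equivalently the HCIZ integral in the complex case), factors into a Vandermonde squared times a function determined entirely by the nontrivial eigenvalues of ${\mathbf{\Sigma}}^{-1}$. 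Because only $k$ eigenvalues of ${\mathbf{\Sigma}}$ differ from the background value $1$, this factor is a finite-rank perturbation of the null Wishart density.

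Next, I would represent the resulting correlation kernel as a double contour integral whose phase function depends on $N$, $T$, and the spike locations, and carry out a steepest-descent analysis in the regime $N,T\to\infty$ with $T/N\to Q>1$. The critical spike value $1+1/\sqrt{Q}$ is exactly the point at which two saddle points of the unperturbed phase coalesce into the Airy saddle that governs the soft edge. In part~(a) the spikes lie at (or below) this threshold, the dominant saddles remain pinned near the Airy saddle, and rescaling about $(1+\sqrt{1/Q})^2$ by $\sqrt{Q}T^{2/3}/(1+\sqrt{Q})^{4/3}$ yields an Airy-type limit kernel decorated by a polynomial factor of degree $k$ coming from the multiplicity-$k$ critical spike; its Fredholm determinant is $F_{\mathrm{TW},k}$. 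In part~(b) the spike is supercritical; a new saddle separates from the Airy saddle, ordinary Laplace asymptotics apply at this isolated nondegenerate saddle, and one reads off the limiting mean $\lambda_1+Q^{-1}\lambda_1/(\lambda_1-1)$ and variance $\bigl(\lambda_1^2-Q^{-1}\lambda_1^2/(\lambda_1-1)^2\bigr)/T$ from the first two derivatives of the phase, with the rank-$k$ degeneracy inflating the Gaussian limit to $F_{\mathrm{G},k}$.

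Finally, convergence of the rescaled kernels in trace norm transfers to convergence of the Fredholm determinants that represent $\Prb(\lambda_1(\mathbf{S})\le x)$, giving the stated distributional limits; the convergence-in-probability statements for $\lambda_1(\mathbf{S})$ fall out immediately from identifying the support of the limiting edge. I expect the main obstacle to be the matched asymptotic analysis at the critical spike value $1+1/\sqrt{Q}$: ordinary Laplace asymptotics fail there because two saddles coalesce, so one must expand the phase to cubic order and match to an Airy-type integral with a rank-$k$ multiplicative correction. This is precisely the delicate steepest-descent computation carried out in~\citet{baik05:_phase}, which I would cite rather than reproduce.
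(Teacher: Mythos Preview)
The paper does not prove this proposition: it is simply recalled from \cite[Theorem~1.1 and Corollary~1.1]{baik05:_phase} as background for the phase-transition discussion in Section~\ref{sec:BBP}. Your outline correctly identifies the statement as the BBP phase transition and sketches the determinantal-kernel/steepest-descent argument of that reference before deferring to the same citation, so your approach and the paper's coincide.
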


This proposition suggests the following phase transition:  For a normal population, if $\lim_{ N\to\infty}
N\rho>1/\sqrt{Q}$, then the largest eigenvalue $\lambda_1(\mathbf{S})$
satisfies the asymptotic normality in a sense of \eqref{asymptotic normality}, and if $\lim_{ N\to\infty}  N\rho<1/\sqrt{Q}$,
then an appropriate scaling of a translation of $\lambda_1(\mathbf{S})$ converges to
the Tracy-Widom distribution $G_1$ in a sense of \eqref{TW}. This holds
for $\lim_{ N\to\infty}
N\rho=\infty$ by Theorem~\ref{thm:yataaoshima}.

The detail of this phase transition will be studied elsewhere.

\section{Proof of Theorem~\ref{thm:main}}\label{proof:main}
We also consider centered~(noncentered, resp.) variant of  the sample
covariance matrix $\mathbf{S}\in{{\mathbb R}}^{N\times N}$~(the sample correlation
matrix $\mathbf{C}\in{{\mathbb R}}^{N\times N}$, resp.):
\begin{definition}\label{def:aux}
   Let ${\mathbf{\tilde Y}}$ be
 $\diag\left(\norm{\bm{x}_1}^{-1},\ldots,\norm{\bm{x}_N}^{-1}\right){\mathbf{X}}\in{{\mathbb R}}^{N\times T}$. Define
 the following three matrices:
   \begin{align*}
{\mathbf{\tilde C}}\ \ &:={\mathbf{\tilde Y}}{\mathbf{\tilde Y}}^\top\in{{\mathbb R}}^{N\times N}&\mbox{(\emph{noncentered sample
   correlation matrix})},
   \\
    \mathbf{E}\ \ &:= T^{-1/2}({\mathbf{X}}-\overline{\mathbf{X}})\in {{\mathbb R}}^{N\times  T},
    \\
\mathbf{S}^\circ\ &:=\mathbf{E}\mathbf{E}^\top\in{{\mathbb R}}^{N\times N}  &\mbox{(\emph{centered
sample covariance matrix})}.
   \end{align*}
\end{definition}

To prove Theorem~\ref{thm:main}~\eqref{consistency}, we will prove:
 \begin{theorem}\label{thm:paraphrased main theorem} Assume
  that $\bm{X}_1,\ldots,\bm{X}_T
\mathrel{\stackrel{\mbox{\emph{i.\@i.\@d.\@}}}{\sim}} \mathrm{N}_N (\bm{\mu},\,\bm{\Delta}{\mathbf{\Sigma}}_\rho\bm{\Delta})$,
 $\bm{\mu}\in{{\mathbb R}}^N$ is deterministic, $\bm{\Delta}$ is deterministic,
 nonsingular diagonal, and $\rho\in[0,\,1)$ is deterministic. 
 Suppose $({\mathbf{P}}=\mathbf{S}$, $\bm{\mu}=\bm{0}$, $\bm{\Delta}=\mathbf{I})$, $({\mathbf{P}}=\mathbf{S}^\circ$, $\bm{\Delta}=\mathbf{I})$, $({\mathbf{P}}={\mathbf{\tilde C}}$,
   $\bm{\mu}=\bm{0})$, or ${\mathbf{P}}=\mathbf{C}$. Then,
${\lambda_1({\mathbf{P}})}/{N}\mathrel{\stackrel{a.s.}{\to}}\rho$.
\end{theorem}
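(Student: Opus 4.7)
The approach is to exploit the one-factor decomposition~\eqref{decomp} of an equi-correlated normal population from Example~\ref{eg:ENP}, isolate the rank-one ``spike'' it produces in $\mathbf{X}\mathbf{X}^\top$, and bound the residual terms by Weyl's inequality. Before anything else, I would reduce the four cases to a single canonical setup by invariance: $\mathbf{C}$ is invariant under affine transformations applied row-by-row, so we may take $\bm{\mu}=\bm{0}$ and $\bm{\Delta}=\mathbf{I}$; $\mathbf{S}^\circ$ is invariant under translations of the data, so together with the assumed $\bm{\Delta}=\mathbf{I}$ we can set $\bm{\mu}=\bm{0}$; ${\mathbf{\tilde C}}$ is invariant under positive per-variable rescalings, which with the assumed $\bm{\mu}=\bm{0}$ reduces to the canonical setup; and for ${\mathbf{P}}=\mathbf{S}$ the canonical setup is already the hypothesis.

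Under $\bm{\mu}=\bm{0}$ and $\bm{\Delta}=\mathbf{I}$, \eqref{decomp} gives $\mathbf{X}=\sqrt{\rho}\,\bm{1}_N\bm{f}^\top+\sqrt{1-\rho}\,\mathbf{G}$, where $\bm{f}=(f_1,\ldots,f_T)^\top$, the matrix $\mathbf{G}=[e_{it}]$ has i.i.d.\ standard normal entries, and $\mathbf{G}$ is independent of $\bm{f}$. Expansion gives
\[
T\mathbf{S}=\rho\|\bm{f}\|^2\bm{1}_N\bm{1}_N^\top+\sqrt{\rho(1-\rho)}\bigl[\bm{1}_N(\mathbf{G}\bm{f})^\top+(\mathbf{G}\bm{f})\bm{1}_N^\top\bigr]+(1-\rho)\mathbf{G}\mathbf{G}^\top.
\]
The first summand is rank one with lone nonzero eigenvalue $\rho N\|\bm{f}\|^2\sim\rho NT$ a.s.\ by the strong law. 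The middle term is rank two, so its operator norm is bounded by $2\sqrt{\rho(1-\rho)}\,\|\bm{1}_N\|\,\|\mathbf{G}\bm{f}\|=O(N\sqrt{T})$ a.s., since $\mathbf{G}\bm{f}\mid\bm{f}\sim\mathrm{N}(\bm{0},\|\bm{f}\|^2\mathbf{I}_N)$. The Bai-Yin bound yields $\|\mathbf{G}\mathbf{G}^\top\|=O(T)$ a.s. Weyl's inequality then gives
\[
\frac{\lambda_1(\mathbf{S})}{N}=\frac{\rho\|\bm{f}\|^2}{T}+O(T^{-1/2})+O(N^{-1})\mathrel{\stackrel{a.s.}{\to}}\rho,
\]
which settles the case ${\mathbf{P}}=\mathbf{S}$. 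The case ${\mathbf{P}}=\mathbf{S}^\circ$ is identical after replacing $\bm{f}$ by $\bm{f}-\bar f\bm{1}_T$ and $\mathbf{G}$ by its row-centered version, since $\|\bm{f}-\bar f\bm{1}_T\|^2/T\to 1$ a.s.\ still holds.

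For ${\mathbf{\tilde C}}$ and $\mathbf{C}$ I would transfer the conclusion by a diagonal perturbation step. With $\mathbf{D}=\diag(\|\bm{x}_i\|^2)$, the identity ${\mathbf{\tilde C}}=(T^{1/2}\mathbf{D}^{-1/2})\,\mathbf{S}\,(T^{1/2}\mathbf{D}^{-1/2})$ shows that if $\max_{1\le i\le N}\bigl|\|\bm{x}_i\|^2/T-1\bigr|\to 0$ a.s., then the conjugating diagonal tends to $\mathbf{I}$ in operator norm and $\lambda_1({\mathbf{\tilde C}})/\lambda_1(\mathbf{S})\to 1$. An identical argument relates $\mathbf{C}$ to $\mathbf{S}^\circ$ through the diagonal $\diag(\|\bm{x}_i-\overline{\bm{x}}_i\|^2)$.

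The main obstacle is the uniform diagonal control $\max_i\bigl|\|\bm{x}_i\|^2/T-1\bigr|\to 0$ a.s.: since every $\bm{x}_i$ shares the common factor $\bm{f}$, the $N$ quadratic forms are strongly dependent and a naive union bound for independent summands is not available. The way around this is the decomposition $\|\bm{x}_i\|^2=\rho\|\bm{f}\|^2+(1-\rho)\|\bm{e}_i\|^2+2\sqrt{\rho(1-\rho)}\,\bm{f}^\top\bm{e}_i$: the first piece is $i$-independent and handled by the strong law, while the remaining two terms are Gaussian quadratic resp.\ bilinear forms whose deviations are controlled by Hanson-Wright/Bernstein-type inequalities, yielding the required uniform bound after a union bound over $N\le CT$ indices.
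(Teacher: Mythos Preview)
Your argument is correct. The principal difference from the paper's proof is in the case ${\mathbf{P}}=\mathbf{S}$: the paper invokes Proposition~\ref{prop:ledoit wolf} (Merlev{\`e}de--Najim--Tian), a black-box result guaranteeing $\lambda_1(\mathbf{S})/\lambda_1({\mathbf{\Sigma}})\to1$ a.s.\ for any population covariance with a single diverging spike, whereas you exploit the explicit one-factor decomposition of ${\mathbf{\Sigma}}_\rho$ and control the cross and noise terms directly by Weyl's inequality and Bai--Yin. Your route is more elementary and self-contained, but it is tied to the equi-correlated structure; the paper's route would transfer verbatim to any ${\mathbf{\Sigma}}$ satisfying the tightness and spectral-gap conditions. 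For $\mathbf{S}^\circ$ the paper instead bounds $\bigl|\snorm{T^{-1/2}({\mathbf{X}}-\overline{\mathbf{X}})}-\snorm{T^{-1/2}{\mathbf{X}}}\bigr|\le\snorm{T^{-1/2}\overline{\mathbf{X}}}$ and reduces to the $\mathbf{S}$ case, while you simply rerun the decomposition argument with the centered factor; both are fine. The diagonal-perturbation transfer to ${\mathbf{\tilde C}}$ and $\mathbf{C}$ is essentially identical in both proofs, and for the uniform control $\max_i\bigl|\norm{\bm{x}_i}^2/T-1\bigr|\to0$ the paper uses the same three-term decomposition you propose but appeals to the Bai--Yin maximal lemma (Proposition~\ref{prop:bai-yin2}) rather than Hanson--Wright/Bernstein plus a union bound; the two devices are interchangeable here since $N\asymp T$ and the tails are sub-exponential. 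One minor point: your bound $\norm{\mathbf{G}\bm{f}}=O(\sqrt{NT})$ a.s.\ is correct but deserves one more line (e.g.\ Borel--Cantelli from the conditional $\chi^2_N$ tail); alternatively the cruder $\norm{\mathbf{G}\bm{f}}\le\snorm{\mathbf{G}}\norm{\bm{f}}=O(T)$ already suffices for the Weyl step.
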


\begin{proof}
 First consider the case where ${\mathbf{P}}=\mathbf{S}$, $\bm{\mu}=\bm{0}$, and
$\bm{\Delta}=\mathbf{I}$.  Suppose $\rho=0$. Then, all the entries of ${\mathbf{X}}$ are
standard normal, so they are independent and $\Exp
\abs{x_{11}}^4<\infty$. Thus,
\begin{align*}
 \lim_{N,T\to\infty,{T/N \to Q}}\lambda_1(\mathbf{S})=b_Q\quad (a.s.)
\end{align*} by
\citep[Theorem~2]{bai-yin2}. Hence, Theorem~\ref{thm:paraphrased main
theorem}~\eqref{consistency} holds for this case when $\rho=0$.

Then, we prove $\lambda_1(\mathbf{S})/N\mathrel{\stackrel{a.s.}{\to}}\rho$ for $\rho>0$. For this, we
 recall
 \cite[Proposition~2.1]{Merlev_de_2019}, which is due to~\cite[Proposition~7.3]{Ledoit2018OptimalEO} and
\cite[Theorem~2.1]{https://doi.org/10.48550/arxiv.1711.00217} according
to \cite{Merlev_de_2019}.
Following \cite[p.~8]{Van98}, we say a sequence  $(F_N)_N$ of distribution functions is \emph{tight} on ${{\mathbb R}}^+$, if
for every $\epsilon\in(0,\,1]$, there is a constant $M\ge0$ such that
\[
 \sup_N F_N(M)>1-\epsilon.
\]
  \begin{proposition}[\protect{\citet[Proposition~2.1]{Merlev_de_2019}}]\label{prop:ledoit wolf}
  Assume
  \begin{enumerate}
   \item \label{A1}
$ \mathbf{S}=T^{-1}{{\mathbf{\Sigma}}}^{1/2}{\mathbf{G}}
	 {\mathbf{G}}^\top{{\mathbf{\Sigma}}}^{1/2}\in{{\mathbb R}}^{N\times N}$, the entries of
  ${\mathbf{G}}\in{{\mathbb R}}^{N \times T}$ are independent, standard normal,
and ${{\mathbf{\Sigma}}}\in{{\mathbb R}}^{N\times N}$ is positive definite, 
	 symmetric, and deterministic.
   \item \label{A2}
A sequence	 
$\left(F^{{{\mathbf{\Sigma}}}}\right)_N$ of empirical spectral distributions
is tight on ${{\mathbb R}}^+$,
	 and $\lim_{ N\to\infty}\lambda_1({{\mathbf{\Sigma}}})=\infty$.
   \item  ${{N,T\to\infty,\ {T/N \to Q}}}\in(0,\infty)$. 
  \end{enumerate}
   Then,
${\lambda_1(\mathbf{S})}/{\lambda_1({{\mathbf{\Sigma}}})}\mathrel{\stackrel{a.s.}{\to}} 1.$
  \end{proposition}
The sequence $\left(F^{{\mathbf{\Sigma}}_\rho}\right)_N$
of the empirical spectral distribution function of ${\mathbf{\Sigma}}_\rho\in{{\mathbb R}}^{N\times N}$ is a tight sequence on
${{\mathbb R}}^+$. To see it,
consider $\epsilon\in (0,\,1]$ and $M=(\epsilon^{-1}-1)\rho+1>0$. For
every integer $N>M$, $\lambda_1({\mathbf{\Sigma}}_\rho)> M$, so
$F^{{\mathbf{\Sigma}}_\rho}(M)=1$.  Hence, $\sup_N F^{{\mathbf{\Sigma}}_\rho}(M)\ge
1-\epsilon$.  Therefore, we have the tightness of the sequence.

Then, by Proposition~\ref{prop:ledoit wolf} with ${{\mathbf{\Sigma}}}:={\mathbf{\Sigma}}_\rho\in{{\mathbb R}}^{N\times N}$,
${\lambda_1(\mathbf{S})}\big/\left((N-1)\rho +
1\right)={\lambda_1(\mathbf{S})}\big/{\lambda_1({\mathbf{\Sigma}}_\rho)} \mathrel{\stackrel{a.s.}{\to}} 1.$
Thus, we have established $\lambda_1(\mathbf{S})/ N\mathrel{\stackrel{a.s.}{\to}}\rho$ for every $\rho\ge0$.

\medskip
Now, we consider the case where ${\mathbf{P}}=\mathbf{S}^\circ$ and
 $\bm{\Delta}=\mathbf{I}$. We will derive ${\lambda_1(\mathbf{S}^\circ)}/{N}\mathrel{\stackrel{a.s.}{\to}}\rho$ for
every $\rho\ge0$, by using the following:
\begin{proposition}[\protect{\citet[Lemma~2]{bai-yin2}}]\label{prop:bai-yin2}
Let $\set{y_{kj} \colon\, k,j\ge1}$
be a double array of \emph{i.\@i.\@d.\@} random variables and let $\alpha>{1}/{2}$, $\beta\ge 0$ and $M>0$ be constants. Then,
\begin{align*}
& \max_{j\le M T^\beta}\abs*{\sum_{k=1}^T  \frac{y_{kj}-m}{T^\alpha}}\mathrel{\stackrel{a.s.}{\to}} 0\quad (T\to\infty)\\
\iff&
\Exp\abs{y_{11}}^{\frac{1+\beta}{\alpha}}< \infty\; \&
\;  m=\begin{cases}\displaystyle
\Exp y_{11},&(\alpha\le 1),\\
\displaystyle\text{\rm any},&(\alpha>1).
\end{cases}
\end{align*}
\end{proposition}

Proposition~\ref{prop:bai-yin2} is also used to prove
Theorem~\ref{thm:equilsd2} in Section~\ref{proof:equilsd2}.

\medskip 
The centered sample covariance matrix
$\mathbf{S}^\circ$ is invariant with respect to the shifting of variables. By
this invariance, we can
assume $\bm{\mu}=\bm{0}.$
It suffices to assure
   \begin{align*}
\lim_{{\substack{N,T\to\infty\\ {T/N \to Q}}}}    \abs*{\sqrt{\frac{\lambda_1(\mathbf{S}^\circ)}{N}}-\sqrt{\frac{\lambda_1(\mathbf{S})}{N}}} = 0\quad(a.s.) .
   \end{align*}

By the triangle inequality for the spectral norm,
 \begin{align}
   \label{ign}
  \abs*{\sqrt{\lambda_1(\mathbf{S}^\circ)} - \sqrt{\lambda_1(\mathbf{S})}} =
\abs*{\snorm{ T^{-1/2}({\mathbf{X}}-\overline{\mathbf{X}})} - \snorm{ T^{-1/2}{\mathbf{X}}}} \le
  \snorm{ T^{-1/2}\overline{\mathbf{X}}}.
 \end{align}

 The last term $\snorm{ T^{-1/2}\overline{\mathbf{X}}}$ is $\sqrt{\sum_{i=1}^N \overline{x}_i^2}$
 (a.s.).
 To see it, note that the $i t$-entry of
 $T^{-1}\overline{\mathbf{X}}\,\overline{\mathbf{X}}^\top$ is $T^{-1}\sum_{k=1}^T  \overline{x}_i\overline{x}_t = \overline{x}_i
 \overline{x}_t$.
 Therefore, $[\overline{x}_1,\ldots,\overline{x}_N]^\top$ is an eigenvector of $T^{-1}\overline{\mathbf{X}}\,\overline{\mathbf{X}}^\top$,
 corresponding to an eigenvalue $\sum_{i=1}^N \overline{x}_i^2$. Clearly, $\rank
 \overline{\mathbf{X}}\le 1$ and almost surely $\rank
 \overline{\mathbf{X}}=1$. Thus, it is also the case for $\rank T^{-1}\overline{\mathbf{X}}\,\overline{\mathbf{X}}^\top$.
Considering the eigenvalue $\sum_{i=1}^N \overline{x}_i^2$ of
 $T^{-1}\overline{\mathbf{X}}\,\overline{\mathbf{X}}^\top$ is positive
 almost surely, the other $N-1$ eigenvalues are 0 almost surely.
 
By dividing \eqref{ign} with $\sqrt{N}$, it holds almost surely that
\begin{align}
\label{qq}   
  \abs*{\sqrt{\frac{\lambda_1(\mathbf{S}^\circ)}{N}} - \sqrt{\frac{\lambda_1(\mathbf{S})}{N}}} \le \sqrt{\sum_{i=1}^N \frac{\overline{x}_i^2}{N}}
 \le\sqrt{\max_{1\le i\le N}\left({\sum_{t=1}^ T}
  \frac{x_{i t}}{T}\right)^2} =\max_{1\le i\le N}\abs*{{\sum_{t=1}^ T}\frac{x_{i t}}{T}}.
\end{align}
By $\bm{\Delta}=\mathbf{I}$ and $\bm{\mu}=\bm{0}$, $x_{i t}$ $(i,t\ge1)$ are standard normal
  random variables equi-correlated with $\rho\in [0,1)$.
By decomposition~\eqref{decomp}, there are independent standard normal
 random variables ${{f}}_t, e_{i t}$ $(i,t\ge1)$ such that
 $x_{i t}=\sqrt{\rho}{{f}}_t +\sqrt{1-\rho}e_{i t}$. By the triangle
  inequality,
  \begin{align*}
   \max_{1\le i\le N}\abs*{{\sum_{t=1}^ T}\frac{x_{i t}}{T}} \le    \abs*{{\sum_{t=1}^ T}\frac{
  \sqrt{\rho}{{f}}_t}{T}} +  \max_{1\le i\le N}\abs*{{\sum_{t=1}^ T}\frac{
  \sqrt{1-\rho}e_{i t}}{T}} .
  \end{align*}
 The first term of the right side tends to 0 in $T\to\infty$ almost
 surely, by the law of large numbers.
 The second term of the right side tends to 0 in $N,T\to\infty, \ {T/N \to Q}$ almost
 surely, by Proposition~\ref{prop:bai-yin2} with $\alpha=\beta=1$ and $M=Q$. Thus,
 \begin{align}
  \label{cas0}\max_{1\le i\le N}\abs{\overline{x}_i}=\max_{1\le i\le N}\abs*{{\sum_{t=1}^ T}\frac{ x_{i t}}{T}} {\stackrel{a.s.}{\to}}0\quad({{N,T\to\infty,\ {T/N \to Q}}}).
 \end{align}
Hence, $\eqref{qq}\mathrel{\stackrel{a.s.}{\to}}0$. Since we have already established
${\lambda_1(\mathbf{S})}/{N}\mathrel{\stackrel{a.s.}{\to}}
\rho$ for every $\rho\ge0$, we then derived
${\lambda_1(\mathbf{S}^\circ)}/{N}\mathrel{\stackrel{a.s.}{\to}} \rho$ for every $\rho\ge0$.

\medskip We consider the case ${\mathbf{P}}={\mathbf{\tilde C}}$ and $\bm\mu=\bm{0}$.  Because the
noncentered sample correlation matrix ${\mathbf{\tilde C}}$ is invariant under the
scaling, we can assume
\begin{align*}
\bm{\Delta}=\mathbf{I}.
\end{align*}
From ${\lambda_1(\mathbf{S})}/{N}\mathrel{\stackrel{a.s.}{\to}} \rho$
 of Theorem~\ref{thm:paraphrased main theorem}, we will guarantee
   \begin{align}
\lim_{{\substack{N,T\to\infty\\ {T/N \to Q}}}}    \abs*{\sqrt{\frac{\lambda_1({\mathbf{\tilde C}})}{N}}-\sqrt{\frac{\lambda_1(\mathbf{S})}{N}}} = 0\quad(a.s.) .\label{tCS:cas0}
   \end{align}
For a diagonal matrix
\begin{align*}
{{\tilde{\mathbf{D}}}}=\diag\left(\frac{\sqrt{T}}{\norm{{\bm{x}_1}}},\ldots,\frac{\sqrt{T}}{\norm{{\bm{x}_N}}}\right),\quad
 \norm{\bm{x}_i}=\sqrt{{\sum_{t=1}^ T} x_{it}^2} 
\end{align*}
we have ${\mathbf{\tilde Y}}= T^{-1/2}{{\tilde{\mathbf{D}}}} {\mathbf{X}}$.

Recall that
  the \emph{spectral norm} of a matrix $\mathbf{M}\in{{\mathbb R}}^{m\times n}$ is
$$\snorm{\mathbf{M}}=\sup_{\bm{X}\in{{\mathbb R}}^n,\norm{\bm{X}}=1} \norm{\mathbf{M} \bm{X}}.$$ Note
$\snorm{\mathbf{M}}=\sqrt{\lambda_1(\mathbf{M}^\top\mathbf{M})}$.
By the triangle inequality of the spectral norm and that
$\snorm{\mathbf{M}_1\mathbf{M}_2} \le \snorm{\mathbf{M}_1} \snorm{\mathbf{M}_2}$ for every $\mathbf{M}_1\in
 {{\mathbb R}}^{N\times N}$ and
 $\mathbf{M}_2\in{{\mathbb R}}^{N\times T}$, 
   \begin{align}
&  \abs*{  \sqrt{\frac{\lambda_1({\mathbf{\tilde C}})}{N}}-\sqrt{\frac{\lambda_1(\mathbf{S})}{N}}}
 \le  \snorm*{\frac{1}{\sqrt{N}}{\mathbf{\tilde Y}}- \frac{1}{\sqrt{N T}}
   {\mathbf{X}}}=\snorm{(N T)^{-1/2}({{\tilde{\mathbf{D}}}} - \mathbf{I}){\mathbf{X}}}\\
 &
 \le\snorm{{{\tilde{\mathbf{D}}}}-\mathbf{I}}\cdot\snorm{(N T)^{-1/2}{\mathbf{X}}}=\max_{{1\le i\le N}}\abs*{\frac{ T^{1/2}}{\norm{\bm{x}_i}}
 - 1 }\cdot \sqrt{\frac{\lambda_1(\mathbf{S})}{N}}.
\label{the same argument}
  \end{align}
   Since we established ${\lambda_1(\mathbf{S})}/{N}\mathrel{\stackrel{a.s.}{\to}}\rho$ from $\bm{\Delta}=\mathbf{I}$ and $\bm{\mu}=\bm{0}$,
   it suffices to verify
\begin{align}
\lim_{{\substack{N,T\to\infty\\ {T/N \to Q}}}}   \max_{{{1\le i\le N}}}\abs*{\frac{\sqrt T}{\norm{\bm{x}_i}} -1}=0\quad(a.s.). \label{uvw}
\end{align}
   By decomposition~\eqref{decomp}, there are independent standard
 normal random variables ${{f}}_t, e_{i t}$ $(i,t\ge1)$ such
 that $x_{i t}=\sqrt{\rho}{{f}}_t +\sqrt{1-\rho}e_{i t}$. Hence,
\begin{align*}{\sum_{t=1}^ T} \frac{x_{i t}^2}{T}= \rho
 {\sum_{t=1}^ T}\frac{{{f}}_t^2}{T}+2\sqrt{\rho(1-\rho)}{\sum_{t=1}^ T}\frac{{{f}}_t e_{i t}}{T}+(1-\rho){\sum_{t=1}^ T}\frac{ e_{i t}^2}{T}.
\end{align*}
Then 
 \begin{align*}
&\max_{{{1\le i\le N}}}\abs*{\frac{\norm{\bm{x}_i}^2}{T} - 1} =
  \max_{{{1\le i\le N}}}\abs*{{\sum_{t=1}^ T}\frac{x_{i t}^2}{T} - 1}\\
\le
 & \rho\max_{{{1\le i\le N}}}\abs*{{\sum_{t=1}^ T}\frac{{{f}}_t^2}{T} - 1}
  + 2\sqrt{\rho(1-\rho)}\max_{{1\le i\le N}}\abs*{{\sum_{t=1}^ T}\frac{{{f}}_t e_{i t}}{T}}
 + (1-\rho)\max_{{{1\le i\le N}}}\abs*{{\sum_{t=1}^ T}\frac{ e_{i t}^2}{T} - 1 }.
 \end{align*}
The right side converges almost surely to 0 by Proposition~\ref{prop:bai-yin2},
 because 
 ${{f}}_t^2$
 $(t\ge1)$ are i.\@i.\@d.\@ of unit mean,  ${{f}}_t  e_{i t}$
 $(i,t\ge1)$ are centered i.\@i.\@d.\@, and $ e_{i t}^2$ $(i,t\ge1)$ are
 i.\@i.\@d.\@ of unit mean.  Hence,
\begin{align}
\label{ato}
\lim_{{\substack{N,T\to\infty\\ {T/N \to Q}}}}\max_{{{1\le i\le N}}}\abs*{\frac{\norm{\bm{x}_i}^2}{T} - 1}=0\quad(a.s.),
\end{align}
from which $\abs*{{\norm{\bm{x}_i}^2}/{T} - 1}>
\abs*{{\norm{\bm{x}_i}}/{\sqrt T} - 1}$ implies
\begin{align}
\label{at}
\lim_{{\substack{N,T\to\infty\\ {T/N \to Q}}}}\max_{{{1\le i\le N}}}\abs*{\frac{\norm{\bm{x}_i}}{\sqrt T} - 1}=0\quad(a.s.).
\end{align}
Suppose $\max_{1\le i\le N}\abs*{{\norm{\bm{x}_i}}/{\sqrt T} - 1}<\epsilon$ for sufficiently
small $\epsilon>0$. Then, for all $i$ $(1\le i\le N)$,
$1-\epsilon < {\norm{\bm{x}_i}}/\sqrt{T}  < 1+\epsilon$,  which implies
$(1+\epsilon)^{-1} -1 <\sqrt{T}/{\norm{\bm{x}_i}} -1 <
(1-\epsilon)^{-1}-1$. Hence, for all $i$ $(1\le i\le N)$, $\abs*{\sqrt{T}/{\norm{\bm{x}_i}} -1} <
\max((1-\epsilon)^{-1}-1,\ \abs*{(1+\epsilon)^{-1}-1})<
\epsilon+\epsilon^2+\epsilon^3+\cdots <2\epsilon$, because $0<\epsilon\ll1$. Thus,
$\max_{1\le i\le N}\abs*{\sqrt{T}/{\norm{\bm{x}_i}} -1}< 2\epsilon$.
Therefore, from \eqref{at},  \eqref{uvw} follows.

Thus, \eqref{tCS:cas0} is guaranteed. Therefore,  ${\lambda_1({\mathbf{\tilde C}})}/{N}\mathrel{\stackrel{a.s.}{\to}}\rho$
  follows because the assumption $\bm{\mu}=\bm{0}$ implies
  ${\lambda_1(\mathbf{S})}/{N}\mathrel{\stackrel{a.s.}{\to}} \rho$.
  
\bigskip
At last, we will demonstrate ${\lambda_1(\mathbf{C})}/{N}\mathrel{\stackrel{a.s.}{\to}} \rho$ of Theorem~\ref{thm:paraphrased main theorem}.
Because the sample correlation matrix $\mathbf{C}$ is invariant under the shifting and scaling, we can assume
$\bm{\Delta}=\mathbf{I}$ and $\bm{\mu}=\bm{0}$. For $\mathbf{C}={\mathbf{Y}}{\mathbf{Y}}^\top$,
we have ${\mathbf{Y}} =  T^{-1/2}\mathbf{D}({\mathbf{X}} - \overline{\mathbf{X}})$ where
\begin{align*} \mathbf{D} = \diag\left( {\sqrt{T}}/{\norm{{\bm{x}_1 -{\overline{\bm{x}}}_1}}},\ldots,{\sqrt{T}}/{\norm{{\bm{x}_N -{\overline{\bm{x}}}_N}}}\right).\end{align*}

Recall $\mathbf{S}^\circ=T^{-1}({\mathbf{X}} -\overline{\mathbf{X}})({\mathbf{X}} -\overline{\mathbf{X}})^\top$. By the same arguments as in \eqref{the same argument},
 \begin{align*}
\abs*{\sqrt{\frac{\lambda_1(\mathbf{C})}{N}}
  -\sqrt{\frac{\lambda_1(\mathbf{S}^\circ)}{N}}} \le
  \max_{{1\le i\le N}}\abs*{\frac{\sqrt{T}}{\norm{{\bm{x}_i-{\overline{\bm{x}}}_i}}} -1} \cdot \sqrt{\frac{\lambda_1(\mathbf{S}^\circ)}{N}} .
 \end{align*}
Because ${\lambda_1(\mathbf{S}^\circ)}/{N}\mathrel{\stackrel{a.s.}{\to}}\rho$ follows from $\bm{\Delta}=\mathbf{I}$ we
 assumed, it suffices to show
\begin{align}\lim_{{\substack{N,T\to\infty\\ {T/N \to Q}}}} \max_{{1\le i\le N}}\abs*{\frac{\sqrt{T}}{\norm{{\bm{x}_i-{\overline{\bm{x}}}_i}}} -1}=0\quad(a.s.).
\label{qqq}\end{align} By $\norm{\bm{x}_i -{\overline{\bm{x}}}_i}^2 = \norm{\bm{x}_i}^2 - T\abs*{\overline{x}_i}^2$, 
 \begin{align*}
\max_{{{1\le i\le N}}}\abs*{\frac{\norm{\bm{x}_i-{\overline{\bm{x}}}_i}^2}{T}-1}\le \max_{{{1\le i\le N}}}\abs*{\frac{\norm{\bm{x}_i}^2}{T}-1}+\max_{{{1\le i\le N}}}\abs*{\overline{x}_i}^2.
 \end{align*}
 The first~(second, resp.) term of the right side converges almost surely to 0, by
\eqref{ato}~(\eqref{cas0}, resp.). In a similar way as above,
\eqref{qqq} is shown.
Therefore,  ${\lambda_1(\mathbf{C})}/{N}\mathrel{\stackrel{a.s.}{\to}} \rho$ for every $\rho\ge0$.
This completes the proof of Theorem~\ref{thm:paraphrased main theorem}.

\end{proof}

\medskip
To prove Theorem~\ref{thm:main}~\eqref{fluctuation}, we employ:
\begin{proposition}[\protect{\citet[Theorem~2.2]{Merlev_de_2019}}]\label{Merlevde19:Theorem
 2.2}
 Assume the three assumptions of Proposition~\ref{prop:ledoit
 wolf}. From a \emph{spectral gap condition} on ${{\mathbf{\Sigma}}}$:
   \begin{align}   \label{spectral gap condition}
    \limsup_{{\substack{N,T\to\infty\\ {T/N \to Q}}}}\frac{\lambda_2({{\mathbf{\Sigma}}})}{\lambda_1({{\mathbf{\Sigma}}})}
< 1,
  \end{align}
it follows that
\begin{align*}
\sqrt{T}\left(\frac{\lambda_1(\mathbf{S})}{\lambda_1({{\mathbf{\Sigma}}})} -1 -
\frac{1}{T}\sum_{k=2}^N \frac{\lambda_k({{\mathbf{\Sigma}}})}{\lambda_1({\mathbf{\Sigma}})
 - \lambda_k({\mathbf{\Sigma}})}\right)\mathrel{\stackrel{D}{\to}} \mathrm{N}(0,\ 
2).
\end{align*}
 \end{proposition}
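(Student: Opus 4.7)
The plan is to reduce the eigenvalue problem to a scalar \emph{secular equation} and then analyse it by pairing a conditional CLT for a Gaussian quadratic form with a deterministic-equivalent expansion of the resolvent of a ``de-spiked'' matrix. Rotational invariance of the Gaussian ensemble (assumption~\ref{A1} of Proposition~\ref{prop:ledoit wolf}) lets us replace ${\mathbf{G}}$ by ${\mathbf{U}}^{\!\top}{\mathbf{G}}$ for any orthogonal ${\mathbf{U}}$, so we may assume ${{\mathbf{\Sigma}}}=\diag(\lambda_1,\ldots,\lambda_N)$; writing $g_k\in{{\mathbb R}}^T$ for the $k$-th row of ${\mathbf{G}}$, the nonzero spectrum of $T\mathbf{S}$ coincides with that of $M=\sum_{k=1}^N\lambda_k g_k g_k^{\!\top}\in{{\mathbb R}}^{T\times T}$. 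Isolating the top spike as $M=\lambda_1 g_1 g_1^{\!\top}+M_1$ with $M_1:=\sum_{k\ge 2}\lambda_k g_k g_k^{\!\top}$, the Sherman--Morrison identity shows that $\mu$ is an eigenvalue of $M$ outside the spectrum of $M_1$ if and only if
\[
\lambda_1\, g_1^{\!\top}(\mu I - M_1)^{-1} g_1 = 1,
\]
and the spectral-gap hypothesis~\eqref{spectral gap condition} ensures that the top eigenvalue of $T\mathbf{S}$ is the largest such $\mu$, lying above the spectrum of $M_1$ by a deterministic positive margin.

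Since $g_1$ is standard Gaussian and independent of $M_1$, conditioning on $M_1$ and using the chi-squared decomposition $g_1^{\!\top}A g_1\stackrel{d}{=}\sum_j a_j\xi_j^2$ (with $a_j$ the eigenvalues of the PSD matrix $A$ and $\xi_j$ i.i.d.\ standard normal) yields the Lindeberg-type CLT
\[
g_1^{\!\top}A g_1 = \Tr A + \sqrt{2\,\Tr A^2}\,\zeta + o_p\bigl(\sqrt{\Tr A^2}\bigr),\qquad \zeta\to\mathrm{N}(0,1),
\]
valid whenever $(\max_j a_j)^2/\Tr A^2\to 0$, which under the spectral gap is guaranteed for $A=(\mu I-M_1)^{-1}$. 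Substituting into the secular equation rewrites it as
\[
\lambda_1\Tr\bigl((\mu I-M_1)^{-1}\bigr) + \lambda_1\sqrt{2\,\Tr\bigl((\mu I-M_1)^{-2}\bigr)}\,\zeta = 1 + o_p(T^{-1/2}).
\]

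To identify the centering I would expand the resolvent via Woodbury together with the Wishart concentration $[g_k^{\!\top}g_l]_{k,l\ge 2}\approx T\,I_{N-1}$, giving the deterministic equivalent
\[
\lambda_1\Tr\bigl((\mu I-M_1)^{-1}\bigr) = \frac{\lambda_1 T}{\mu}\Bigl(1+\sum_{k\ge 2}\frac{\lambda_k}{\mu/T-\lambda_k}\Bigr) + o_p(T^{-1/2}),
\]
whose leading-order solution of ``$=1$'' is the BBP-type fixed point $\mu_\star/(T\lambda_1) = 1 + T^{-1}\sum_{k\ge 2}\lambda_k/(\lambda_1-\lambda_k)$. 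Since $\lambda_1^2\Tr((\mu I-M_1)^{-2})\sim T\lambda_1^2/\mu_\star^2\sim 1/T$ at this point, Taylor-expanding the displayed CLT above around $\mu_\star$ reduces it to
\[
(\mu-\mu_\star)/(T\lambda_1) = \sqrt{2/T}\,\zeta + o_p(T^{-1/2}),
\]
which, recalling $\mu/T=\lambda_1(\mathbf{S})$ and dividing by $\lambda_1$, yields the target $\mathrm{N}(0,2)$ limit via Slutsky's lemma.

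The principal obstacle is making the deterministic-equivalent step uniform as $\lambda_1({\mathbf{\Sigma}})\to\infty$ together with $N,T\to\infty,\ T/N\to Q$: the matrix $M_1/T$ has its own nontrivial limiting spectral distribution (a generalised Mar\v{c}enko--Pastur law weighted by $\lambda_2,\ldots,\lambda_N$), so one must quantitatively control the Stieltjes transform of $M_1/T$ at the point $\mu/T$---pushed just above its spectral edge by the spike---via a Bai--Silverstein-style concentration inequality combined with a non-asymptotic Hanson--Wright bound, in order to bring all remainders down to genuine $o_p(T^{-1/2})$ terms. The spectral-gap condition~\eqref{spectral gap condition} is exactly the hypothesis keeping $\mu/T$ a fixed positive distance above this edge, so that the resolvent norms remain uniformly bounded throughout.
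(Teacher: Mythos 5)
The paper contains no proof of this proposition: it is imported verbatim from \cite[Theorem~2.2]{Merlev_de_2019} and used as a black box to deduce Theorem~\ref{thm:main}~\eqref{fluctuation}, so there is no internal argument to compare yours against. Judged on its own terms, your sketch reconstructs the strategy that the cited source (and BBP-type analyses generally) actually follows: pass to the $T\times T$ companion matrix, isolate the rank-one spike, derive the secular equation $\lambda_1\,g_1^{\top}(\mu I-M_1)^{-1}g_1=1$, apply a CLT for the Gaussian quadratic form conditionally on $M_1$, and locate the crossing point via a deterministic equivalent of $\Tr\bigl((\mu I-M_1)^{-1}\bigr)$. The centering and the limiting variance $2$ both emerge correctly from your computation, so the route is the right one.

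As a proof, however, it has genuine gaps beyond the one you flag. First, the displayed deterministic equivalent is missing a factor $T^{-1}$ in front of $\sum_{k\ge2}\lambda_k/(\mu/T-\lambda_k)$ (compare with the exactly computable case where the $g_k$, $k\ge2$, are orthogonal of norm $\sqrt{T}$); your subsequent fixed point $\mu_\star/(T\lambda_1)=1+T^{-1}\sum_{k\ge2}\lambda_k/(\lambda_1-\lambda_k)$ is consistent only with the corrected version, so as written the display does not yield the stated centering. Second, the conditional CLT is invoked for $A=(\mu I-M_1)^{-1}$ at the \emph{random} point $\mu=T\lambda_1(\mathbf{S})$, which depends on $g_1$ and hence is not independent of the quadratic form; the Taylor expansion around $\mu_\star$ is the standard repair, but it requires monotonicity of the secular function in $\mu$ above the spectrum of $M_1$ together with uniformity of both the CLT remainder and the deterministic-equivalent remainder over a deterministic neighbourhood of $\mu_\star$, none of which is argued. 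Third, one must also verify that the fluctuation of $\lambda_1\Tr\bigl((\mu_\star I-M_1)^{-1}\bigr)$ about its deterministic equivalent is itself $o_p(T^{-1/2})$; this holds because $\mu_\star\asymp T\lambda_1$ with $\lambda_1({\mathbf{\Sigma}})\to\infty$, but it is precisely the kind of quantitative resolvent estimate you defer. Since the remainder control that constitutes the substance of \cite[Theorem~2.2]{Merlev_de_2019} is explicitly postponed, the proposal should be read as a correct outline of the known proof rather than a proof.
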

The spectral gap
condition~\eqref{spectral gap condition} holds for
${\mathbf{\Sigma}}={\mathbf{\Sigma}}_\rho$ with $\rho>0$, because \eqref{ev}
implies
\[
  \limsup_{{\substack{N,T\to\infty\\ {T/N \to Q}}}}\frac{\lambda_2({\mathbf{\Sigma}})}{\lambda_1({{\mathbf{\Sigma}}})}=\limsup_{{\substack{N,T\to\infty\\ {T/N \to Q}}}} \frac{1-\rho}{(N-1)\rho+1}=0.
\]
Then,
${T}^{-1}\sum_{k=2}^ N{\lambda_k({{\mathbf{\Sigma}}})}/{\left(\lambda_1({{\mathbf{\Sigma}}})
- \lambda_k({{\mathbf{\Sigma}}})\right)} = {{(N -1)(1-\rho)}/{(N T\rho)}}$. This completes the proof of Theorem~\ref{thm:main}~\eqref{fluctuation}.

\section{Proof of Corollary~\ref{cor:soundness}}\label{proof:soundness}

From Proposition~\ref{prop:AH22}, $F^{\mathbf{C}}(x)$ converges to
 $\mathrm{MP}_{Q,\, 1-\rho}(x)$ almost surely for every $x\ne0$. It suffices to prove
that
 $\mathrm{MP}_{T/N ,\, 1-\lambda_1(\mathbf{C})/N}(x)$ converges almost surely to
 $\mathrm{MP}_{Q,\,1-\rho}(x)$ for any $x\ne0$. By Theorem~\ref{thm:main},
 $\lambda_1(\mathbf{C})/N\mathrel{\stackrel{a.s.}{\to}} \rho$, so $x/(1-\lambda_1(\mathbf{C})/N)) \mathrel{\stackrel{a.s.}{\to}}
 x/(1-\rho)>0$.  The distribution function $\mathrm{MP}_{Q}(t)$ of a
 Mar\v{c}enko-Pastur distribution is continuous with respect to
  $t>0$. Thus,
\begin{align}
  \label{rhoconv}
  \lim_{{\substack{N,T\to\infty\\ {T/N \to Q}}}} \abs*{\mathrm{MP}_{T/N }\left(\frac{x}{1-\lambda_1(\mathbf{C})/N}\right)-
 \mathrm{MP}_{T/N }\left(\frac{x}{1-\rho}\right)}=0\quad(a.s.).
\end{align}
On the other hand, $\mathrm{MP}_{Q}(t)$ is continuous
 with respect to $Q\in(0,\infty)$ for each $t>0$, in view of the density
 function~\eqref{density}. Therefore,
\begin{align*}
   \lim_{{\substack{N,T\to\infty\\ {T/N \to Q}}}}
\abs*{\mathrm{MP}_{T/N }\left(\frac{x}{1-\rho}\right)-
  \mathrm{MP}_{Q}\left(\frac{x}{1-\rho}\right)}=0\quad(a.s). 
\end{align*}
By this, \eqref{rhoconv}, and the triangle inequality,
\begin{align*}
\lim_{{\substack{N,T\to\infty\\ {T/N \to Q}}}} \abs*{\mathrm{MP}_{T/N }\left(\frac{x}{1-\lambda_1(\mathbf{C})/N}\right)-
 \mathrm{MP}_{Q}\left(\frac{x}{1-\rho}\right)}
  = 0\quad(a.s.).
\end{align*}
\section{Proof of Theorem~\ref{thm:equilsd2}}\label{proof:equilsd2}

\subsection{LSD of sample covariance matrix generated from factor model}
  
In this subsection, we provide the LSD of the sample covariance matrix
  $\mathbf{S}$ generated from a factor model~(Definition~\ref{def:factor model})
  in $N,T\to\infty,{T/N \to Q}$ with Assumption~\ref{ass:conv}. Let $F_1$ and $F_2$ be distribution functions.
The \emph{L{\'e}vy distance} between $F_1$ and
$F_2$ is denoted by $\Le(F_1,F_2)$. For the definition, see
\cite[Definition~2.7]{huber09:_robus}. By \cite[Theorem~2.9]{huber09:_robus},
\begin{proposition}
 \label{metrize}
  For every sequence $(F_n)_n$ of distribution functions, and every
distribution function $F$, $F_n$ converges weakly to $F$
 if and only if $\Le(F_n,F)\to0$.
\end{proposition}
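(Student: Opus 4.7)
The plan is to establish both directions of the equivalence directly from the standard definition
\[
\Le(F,G) = \inf\{\epsilon > 0 \,:\, F(x-\epsilon) - \epsilon \le G(x) \le F(x+\epsilon) + \epsilon \text{ for all } x \in {{\mathbb R}}\},
\]
noting that the defining sandwich is symmetric in $F$ and $G$ (swapping them merely interchanges the roles of the two inequalities), so $\Le$ is indeed a metric on the space of distribution functions.

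First I would dispatch the easy direction: assuming $\Le(F_n,F) \to 0$, fix a continuity point $x$ of $F$. For each $\epsilon > 0$, eventually $\Le(F_n,F) < \epsilon$, so $F(x-\epsilon) - \epsilon \le F_n(x) \le F(x+\epsilon) + \epsilon$. Taking $\liminf$ and $\limsup$ and then letting $\epsilon \to 0$, continuity of $F$ at $x$ squeezes $F_n(x) \to F(x)$, giving weak convergence.

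For the converse, assume $F_n$ converges weakly to $F$ and fix $\epsilon > 0$. The central construction is a finite grid of continuity points of $F$, $x_0 < x_1 < \cdots < x_k$, with $x_{i+1} - x_i < \epsilon$, $F(x_0) < \epsilon$, and $F(x_k) > 1 - \epsilon$; such points exist because $F$ has at most countably many discontinuities (so continuity points are dense in ${{\mathbb R}}$) and the tail conditions $F(-\infty) = 0$, $F(+\infty) = 1$ guarantee that suitable endpoints are available. Weak convergence at the finitely many grid points then yields $|F_n(x_i) - F(x_i)| < \epsilon$ simultaneously for all $i$, for every $n$ beyond some $n_0$. For any $x \in [x_i, x_{i+1}]$, monotonicity gives
\[
F_n(x) \le F_n(x_{i+1}) < F(x_{i+1}) + \epsilon \le F(x + \epsilon) + \epsilon,
\]
and symmetrically $F_n(x) \ge F(x - \epsilon) - \epsilon$, so the Lévy sandwich holds inside the grid.

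The main obstacle is the tail bookkeeping: one must verify the sandwich also for $x < x_0$ and $x > x_k$, where pointwise weak convergence is not directly usable. The resolution is to combine the monotonicity of $F_n$ with the endpoint control just obtained: for $x < x_0$, $0 \le F_n(x) \le F_n(x_0) < F(x_0) + \epsilon < 2\epsilon$, which is dominated by $F(x+\epsilon) + 2\epsilon$, and $F_n(x) \ge 0 \ge F(x-\epsilon) - \epsilon$ since $F(x-\epsilon) \le F(x_0) < \epsilon$; the case $x > x_k$ is symmetric. Up to absorbing the constant $2$ by running the construction with $\epsilon/2$ in place of $\epsilon$, this yields $\Le(F_n,F) \le \epsilon$ for all large $n$. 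Since $\epsilon$ was arbitrary, $\Le(F_n,F) \to 0$, completing the proof.
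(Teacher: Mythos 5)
Your proof is correct. Note, however, that the paper does not actually prove Proposition~\ref{metrize}: it is quoted verbatim from \citet[Theorem~2.9]{huber09:_robus}, so there is no in-paper argument to compare against. What you have written is the standard self-contained derivation, and it is essentially the same grid argument that the cited reference uses: the easy direction squeezes $F_n(x)$ between $F(x\pm\epsilon)\pm\epsilon$ at continuity points, and the converse builds a finite mesh of continuity points with small spacing and small tail mass, upgrades pointwise convergence at the finitely many mesh points to the uniform L\'evy sandwich via monotonicity, and handles the two unbounded tails separately using $F(x_0)<\epsilon$ and $F(x_k)>1-\epsilon$. All the steps check out: the interior estimate $F_n(x)\le F_n(x_{i+1})<F(x_{i+1})+\epsilon\le F(x+\epsilon)+\epsilon$ uses only $x_{i+1}<x+\epsilon$ and monotonicity, the tail bounds are consistent with the paper's definition of a distribution function (which does require the limits $0$ and $1$ at $\mp\infty$), and the loss of a factor $2$ in the tails is harmlessly absorbed by starting from $\epsilon/2$, exactly as you say. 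The only cosmetic remark is that the symmetry of $\Le$ you mention at the outset is not needed anywhere in the argument, so you could drop that sentence; what matters is only that the one-sided sandwich with parameter $\epsilon$ bounds $\Le(F_n,F)$ by $\epsilon$.
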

By \citet[p.~36]{huber09:_robus}, the \emph{Kolmogorov distance} between two $F_1$ and $F_2$ is defined as
$
\K(F_1,F_2)=\sup_ {x\in {{\mathbb R}}}\abs*{F_1(x)-F_2(x)}$, 
and satisfies
\begin{align}
 \Le(F_1,F_2)\le \K(F_1,F_2).\label{huber2}
\end{align}

\begin{proposition}[\protect{\citet[Lemma~2.6, the rank inequality]{bai99:_method}}] \label{rank}
\begin{align*}
\K\left(F^{{\mathbf{A}}{\mathbf{A}}^\top},F^{\mathbf{B}\mathbf{B}^\top}\right)\leq\frac{1}{N}\;\rank({\mathbf{A}}-\mathbf{B}),\qquad\left({\mathbf{A}},\mathbf{B}\in{{\mathbb R}}^{N\times T}\right).\end{align*}
\end{proposition}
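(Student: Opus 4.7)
The plan is to reduce the Kolmogorov sup-distance between the two empirical spectral distributions to a counting bound on ``shifted'' eigenvalues, where the shift is governed by $r:=\rank({\mathbf{A}}-\mathbf{B})$. The only real tool I would invoke is the Weyl-type perturbation inequality for singular values: for any ${\mathbf{X}},{\mathbf{Y}}\in{{\mathbb R}}^{N\times T}$ and indices $i,j\ge1$ with $i+j-1\le \min(N,T)$,
\begin{align*}
\sigma_{i+j-1}({\mathbf{X}}+{\mathbf{Y}})\ \le\ \sigma_i({\mathbf{X}})+\sigma_j({\mathbf{Y}}),
\end{align*}
where $\sigma_k(\cdot)$ denotes the $k$-th largest singular value. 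Setting ${\mathbf{X}}:={\mathbf{A}}$, ${\mathbf{Y}}:=\mathbf{B}-{\mathbf{A}}$, and $j:=r+1$, the rank hypothesis forces $\sigma_{r+1}(\mathbf{B}-{\mathbf{A}})=0$, so the inequality collapses to $\sigma_{i+r}(\mathbf{B})\le \sigma_i({\mathbf{A}})$; interchanging the roles of ${\mathbf{A}}$ and $\mathbf{B}$ yields the mirror inequality $\sigma_{i+r}({\mathbf{A}})\le \sigma_i(\mathbf{B})$.

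Next I would pass from singular values to the nonincreasingly ordered eigenvalues $\lambda_k({\mathbf{A}}{\mathbf{A}}^\top)=\sigma_k({\mathbf{A}})^2$ and $\lambda_k(\mathbf{B}\mathbf{B}^\top)=\sigma_k(\mathbf{B})^2$ by squaring, obtaining the shifted interlacings
\begin{align*}
\lambda_{i+r}({\mathbf{A}}{\mathbf{A}}^\top)\ \le\ \lambda_i(\mathbf{B}\mathbf{B}^\top),\qquad \lambda_{i+r}(\mathbf{B}\mathbf{B}^\top)\ \le\ \lambda_i({\mathbf{A}}{\mathbf{A}}^\top),
\end{align*}
valid for $1\le i\le N-r$. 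These are exactly what is needed to compare the empirical CDFs. Fix any $x\in{{\mathbb R}}$ and set $k:=N F^{\mathbf{B}\mathbf{B}^\top}(x)$, so that the $k$ smallest eigenvalues $\lambda_{N-k+1}(\mathbf{B}\mathbf{B}^\top),\ldots,\lambda_N(\mathbf{B}\mathbf{B}^\top)$ are all $\le x$. Applying the first shifted interlacing at $i=N-k+1$ shows that $\lambda_{N-k+1+r}({\mathbf{A}}{\mathbf{A}}^\top),\ldots,\lambda_N({\mathbf{A}}{\mathbf{A}}^\top)$ are also $\le x$, hence $N F^{{\mathbf{A}}{\mathbf{A}}^\top}(x)\ge k-r$. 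Symmetric use of the second interlacing gives $N F^{\mathbf{B}\mathbf{B}^\top}(x)\ge N F^{{\mathbf{A}}{\mathbf{A}}^\top}(x)-r$, and combining the two yields $\abs{F^{{\mathbf{A}}{\mathbf{A}}^\top}(x)-F^{\mathbf{B}\mathbf{B}^\top}(x)}\le r/N$; taking the supremum over $x$ delivers the inequality.

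I do not expect a serious obstacle. The only matters needing care are the trivial boundary cases $r=0$ (where the two distributions are identical) and $r\ge N$ (where the right-hand side $r/N\ge1$ makes the claim vacuous since $\K\le1$ always), together with the indexing convention for Weyl's inequality in the rectangular setting $N\times T$ when $T<N$; in that case the ``extra'' eigenvalues of ${\mathbf{A}}{\mathbf{A}}^\top$ and $\mathbf{B}\mathbf{B}^\top$ past index $\min(N,T)$ are all zero, which is consistent with extending $\sigma_k(\cdot)$ by zero for $k>\min(N,T)$ and preserves the shifted interlacings above. Beyond that bookkeeping, the argument is essentially one line of perturbation theory plus two lines of counting.
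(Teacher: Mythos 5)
Your argument is correct. The paper does not prove this statement at all; it imports it verbatim as Lemma~2.6 of Bai (1999), and the proof given there is exactly the one you outline --- Weyl's singular-value perturbation inequality $\sigma_{i+j-1}(\mathbf{X}+\mathbf{Y})\le\sigma_i(\mathbf{X})+\sigma_j(\mathbf{Y})$ with the rank hypothesis killing $\sigma_{r+1}(\mathbf{B}-\mathbf{A})$, followed by the shifted interlacing of the squared singular values and the counting of eigenvalues below a level $x$ --- so your proposal reproduces the standard proof of the cited result.
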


\begin{theorem}[Scaling]\label{thm:equilsd}Given a factor model.
 Suppose ${{N,T\to\infty,\ {T/N \to Q}}}\in(0,\,\infty)$ and
 Assumption~\ref{ass:conv}. Then, the following two hold almost surely:
 \begin{enumerate}
  \item \label{assert:S} $F^{\mathbf{S}}$  converges weakly
 almost surely to
$\mathrm{MP}_{Q,\sigma_\infty^2(1-\rho)}$, if $\mu_i=0$ $(1\le i\le N)$. 
\item \label{assert:EE} $F^{\mathbf{E}\mathbf{E}^\top}$ converges weakly
 almost surely to
$\mathrm{MP}_{Q,\sigma_\infty^2(1-\rho)}$. \end{enumerate}
\end{theorem}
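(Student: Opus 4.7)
The plan is to strip off the low-rank common-factor contribution and reduce both assertions to Proposition~\ref{prop:MP law}~\eqref{assert:MP LT} applied to the purely idiosyncratic data matrix $\bm{\mathcal{E}} := [e_{it}]_{it} \in {{\mathbb R}}^{N\times T}$. First, from Assumption~\ref{ass:conv} one has $\sigma_\infty^2(1-\rho) = \psi_1$ by a direct algebraic computation, so the target LSD $\mathrm{MP}_{Q,\sigma_\infty^2(1-\rho)}$ equals $\mathrm{MP}_{Q,\psi_1}$, which is precisely what Proposition~\ref{prop:MP law}~\eqref{assert:MP LT} produces almost surely for $T^{-1}\bm{\mathcal{E}}\bm{\mathcal{E}}^\top$, since the $e_{it}$ are i.i.d.\@ with variance $\psi_1$.

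For assertion~\eqref{assert:S}, I would rewrite the factor model in matrix form ${\mathbf{X}} = \bm{L}\bm{F} + \bm{\mathcal{E}}$, where $\bm{L} = [\bm{\ell}_1^\top,\ldots,\bm{\ell}_N^\top]^\top \in {{\mathbb R}}^{N\times K}$ and $\bm{F} = [\bm{f}_1,\ldots,\bm{f}_T] \in {{\mathbb R}}^{K\times T}$; the hypothesis $\mu_i = 0$ eliminates the mean term. Setting $\mathbf{A} := T^{-1/2}{\mathbf{X}}$ and $\mathbf{B} := T^{-1/2}\bm{\mathcal{E}}$, the difference $\mathbf{A} - \mathbf{B} = T^{-1/2}\bm{L}\bm{F}$ has rank at most $K$ regardless of $N$. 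Proposition~\ref{rank} then gives
\begin{align*}
\K(F^{\mathbf{S}},\, F^{T^{-1}\bm{\mathcal{E}}\bm{\mathcal{E}}^\top}) \le \frac{K}{N} \to 0,
\end{align*}
and combining this with \eqref{huber2} and Proposition~\ref{metrize} transfers the weak convergence of $F^{T^{-1}\bm{\mathcal{E}}\bm{\mathcal{E}}^\top}$ to $\mathrm{MP}_{Q,\psi_1}$ over to $F^{\mathbf{S}}$.

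For assertion~\eqref{assert:EE}, the deterministic shift $\mu_i\bm{1}^\top$ cancels exactly under column-centering, so no assumption on the means is needed. Letting $\overline{\bm{F}}$ and $\overline{\bm{\mathcal{E}}}$ denote the column-means of $\bm{F}$ and $\bm{\mathcal{E}}$ repeated across all $T$ columns (analogously to $\overline{\mathbf{X}}$), a direct computation yields $T^{1/2}\mathbf{E} = \bm{L}(\bm{F} - \overline{\bm{F}}) + (\bm{\mathcal{E}} - \overline{\bm{\mathcal{E}}})$. Setting $\tilde{\mathbf{E}}_0 := T^{-1/2}\bm{\mathcal{E}}$, the matrix $\mathbf{E} - \tilde{\mathbf{E}}_0 = T^{-1/2}\bm{L}(\bm{F} - \overline{\bm{F}}) - T^{-1/2}\overline{\bm{\mathcal{E}}}$ has rank bounded by $K+1$, so a second application of Proposition~\ref{rank} gives
\begin{align*}
\K(F^{\mathbf{E}\mathbf{E}^\top},\, F^{\tilde{\mathbf{E}}_0\tilde{\mathbf{E}}_0^\top}) \le \frac{K+1}{N} \to 0,
\end{align*}
and the same Kolmogorov–L{\'e}vy argument then delivers $F^{\mathbf{E}\mathbf{E}^\top} \to \mathrm{MP}_{Q,\psi_1}$ weakly, almost surely.

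The argument is essentially mechanical once the rank reduction is spotted, and I would not expect a serious obstacle. The main thing to be careful about is tracking which perturbations remain dimension-free—the rank-$K$ factor contribution and the single rank-one centering term—so that their contribution to the Kolmogorov distance vanishes as $N \to \infty$. Once this is in place, Assumption~\ref{ass:conv}'s i.i.d.\@ hypothesis on $\{e_{it}\}$ reduces the problem to the standard Marchenko-Pastur law of Proposition~\ref{prop:MP law}~\eqref{assert:MP LT}, and the identity $\sigma_\infty^2(1-\rho) = \psi_1$ supplies the stated scale parameter.
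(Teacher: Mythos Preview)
Your proposal is correct and follows essentially the same approach as the paper: both reduce to $\mathrm{MP}_{Q,\psi_1}$ for the idiosyncratic matrix via Proposition~\ref{prop:MP law}~\eqref{assert:MP LT}, then use the rank inequality (Proposition~\ref{rank}) together with \eqref{huber2} and Proposition~\ref{metrize} to strip off the low-rank factor and centering contributions. The only cosmetic difference is that for assertion~\eqref{assert:EE} the paper chains $F^{\mathbf{E}\mathbf{E}^\top}\to F^{\mathbf{S}}\to F^{T^{-1}\bm{\Psi}\bm{\Psi}^\top}$ via a rank-$1$ step followed by a rank-$K$ step, whereas you compare $\mathbf{E}$ to $T^{-1/2}\bm{\mathcal{E}}$ directly with a single rank-$(K{+}1)$ bound.
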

  \begin{proof}By Definition~\ref{def:factor model},
   the data matrix
   is written as
    \begin{align}
    {\mathbf{X}}={\sum_{k=1}^K} \left[\ell_{ik} f_{t k}\right]_{i t}+\bm{\Psi},\quad  \bm{\Psi}:=[e_{i t}]_{i t}\in{{\mathbb R}}^{N\times T}\label{xwx}.  \end{align} 
By Assumption~\ref{ass:conv} and Proposition~\ref{prop:MP law}~\eqref{assert:MP LT}, the ESD of $T^{-1} \bm{\Psi} \bm{\Psi}^\top$ converges weakly to $\mathrm{MP}_{Q,\psi_1}$ almost surely. Then, Proposition~\ref{metrize} implies
\begin{align}\label{xi}
\lim_{{\substack{N,T\to\infty\\ {T/N \to Q}}}} \Le\left(F^{{T}^{-1}\bm{\Psi} \bm{\Psi}^\top},\,
\mathrm{MP}_{Q,\psi_1}
 \right)=0\quad(a.s.)
\end{align}
where $\psi_1=\sigma_\infty^2(1-\rho)$ by Assumption~\ref{ass:conv}.
Recall  $\mathbf{S}=T^{-1}{\mathbf{X}}{\mathbf{X}}^\top$.
Thus, by \eqref{huber2}, Proposition~\ref{rank}, \eqref{xwx}, and
  $\rank[\ell_{ik} f_{t k}]_{i t}\le 1$ $(1\le k\le K)$, it holds
   almost surely that
\begin{align*}
&\Le\left(F^{\mathbf{S}},\ 
F^{{T}^{-1}\bm{\Psi} \bm{\Psi}^\top}\right)\le
 \K\left(F^{\mathbf{S}},\ F^{{T}^{-1}\bm{\Psi} \bm{\Psi}^\top}\right)
\le
 \frac{1}{N}\rank\left(\frac{1}{\sqrt{T}}{\mathbf{X}}-\frac{1}{\sqrt{T}}\bm{\Psi}\right)
 \\
&=  \frac{1}{N}\rank
 \left(\frac{1}{\sqrt{T}}{\sum_{k=1}^K}\left[\ell_{ik} f_{t k}\right]_{i t}
 \right)
\le  \frac{1}{N}{\sum_{k=1}^K}\rank
 \left(\left[\ell_{ik} f_{t k}\right]_{i t}
 \right)
\le \frac{K}{N}\to 0 \qquad( N\to \infty).
\end{align*}
By this, \eqref{xi}, and the triangle inequality,
\begin{align}\label{FS}
 \lim_{{\substack{N,T\to\infty\\ {T/N \to Q}}}} \Le\left(F^{\mathbf{S}},\ 
\mathrm{MP}_{Q,\psi_1}
\right)= 0\quad(a.s.),
\end{align}
which implies the first assertion of this theorem through
   Proposition~\ref{metrize}. By the same proposition,
   the second assertion
   of this theorem follows by the triangle inequality from \eqref{FS} and $$\lim_{{\substack{N,T\to\infty\\ {T/N \to Q}}}} \Le\left(F^{\mathbf{E}\mathbf{E}^\top},\
  \mathrm{MP}_{Q,\psi_1}
  \right)=0\quad (a.s.).$$ The last is because
by \eqref{huber2}, Proposition~\ref{rank}, and $\rank (\overline{\mathbf{X}})\le 1$,
it holds almost surely that
\begin{align*}
\Le(F^{\mathbf{E}\mathbf{E}^\top},\ F^{\mathbf{S}})\le
 \K\left(F^{\mathbf{E}\mathbf{E}^\top},\ F^{{T}^{-1}{{\mathbf{X}}{\mathbf{X}}^\top}}\right)
\le \frac{\rank \left(\overline{\mathbf{X}}\right)}{N}
\le {1}/{N}\to 0\quad( N\to \infty).
   \end{align*}
This completes the proof of Theorem~\ref{thm:equilsd}.  \end{proof}

\subsection{LSD of sample correlation matrix generated from factor model}

In this subsection, we provide the LSD of $\mathbf{C}$ generated from a factor
model~(Definition~\ref{def:factor model}) in $N,T\to\infty,{T/N \to Q}$ with
Assumption~\ref{ass:conv}.
By Assumption~\ref{ass:conv},
\begin{align}L:=\sup\set{
\abs*{\ell_{ik}}\colon {1\le i,\ 1\le k\le K}}<\infty. \label{L1norm}\end{align}

  For $\mu_i=0$ $(1\le i\le N)$,
\begin{align}
 \label{prel}
{\sum_{t=1}^ T}\frac{x_{i t}^2}{T}= \frac{\Tr\left(  \bm{\ell}_i^\top{\bm{\ell}}_i {\sum_{t=1}^ T}
  {{\bm{f}}}_t{{\bm{f}}}_t^\top \right)}{T}
  +2{\sum_{t=1}^ T}\frac{{\bm{\ell}}_i{{\bm{f}}}_t e_{i t}}{T}+{\sum_{t=1}^ T}\frac{ e_{i t}^2}{T}
\end{align}
 because $$\frac{1}{T}{\sum_{t=1}^ T}{{\bm{f}}}_t^\top {\bm{\ell}}_i^\top{\bm{\ell}}_i {{\bm{f}}}_t=\Tr\left( {\bm{\ell}}_i^\top{\bm{\ell}}_i \frac{{\sum_{t=1}^ T}
{{\bm{f}}}_t{{\bm{f}}}_t^\top}{T} \right).$$

\begin{lemma}\label{lem:lim2}For a factor model with $\mu_i=0$ $(1\le i\le N)$, if
 Assumption~\ref{ass:conv}, then
 \begin{align*}
  \lim_{{{\substack{N,T\to\infty\\ {T/N \to Q}}}}}\frac{1}{N}\Tr\mathbf{S}= \sigma_\infty^2\quad (a.s.).
 \end{align*}
\end{lemma}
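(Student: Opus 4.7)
The plan is to decompose the trace using identity~\eqref{prel} and handle three pieces separately, showing they converge respectively to $\|\bm{\ell}\|^2$, $0$, and $\psi_1$, summing to $\sigma_\infty^2$ by Assumption~\ref{ass:conv}.

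Writing $\frac{1}{N}\Tr\mathbf{S} = \frac{1}{N}\sum_{i=1}^N \frac{1}{T}\sum_{t=1}^T x_{it}^2$ and substituting \eqref{prel}, I would group the three resulting sums:
\begin{align*}
\frac{1}{N}\Tr\mathbf{S} \;=\; \Tr\!\left(\Bigl(\tfrac{1}{N}\sum_{i=1}^N\bm{\ell}_i^\top\bm{\ell}_i\Bigr)\cdot \tfrac{1}{T}\sum_{t=1}^T\bm{f}_t\bm{f}_t^\top\right) + \frac{2}{NT}\sum_{i,t}\bm{\ell}_i\bm{f}_t e_{it} + \frac{1}{NT}\sum_{i,t} e_{it}^2.
\end{align*}

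For the first (factor) term: since the $f_{tk}$ are i.i.d.\ centered with $\var f_{t1}=1$ and independent across $k$, one has $\Exp[\bm{f}_t\bm{f}_t^\top]=I_K$, and the strong law of large numbers gives $\frac{1}{T}\sum_t\bm{f}_t\bm{f}_t^\top\to I_K$ almost surely. By Assumption~\ref{ass:conv}, $\bm{\ell}_i\to\bm{\ell}$, so the Ces\`aro average $\frac{1}{N}\sum_i\bm{\ell}_i^\top\bm{\ell}_i$ tends to $\bm{\ell}^\top\bm{\ell}$; the factor term therefore converges to $\Tr(\bm{\ell}^\top\bm{\ell}\cdot I_K)=\norm{\bm{\ell}}^2$. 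For the third (idiosyncratic) term: since $e_{it}$ are i.i.d.\ across both indices with $\Exp e_{11}^2=\psi_1$, Proposition~\ref{prop:bai-yin2} (or equivalently the two-dimensional strong law applied to $e_{it}^2$) yields $\frac{1}{NT}\sum_{i,t}e_{it}^2\to\psi_1$ almost surely.

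For the cross term, the key observation is that $\Exp[\bm{\ell}_i\bm{f}_t e_{it}]=0$ by independence and $\Exp e_{it}=0$; moreover, the summands are pairwise uncorrelated because any covariance forces either the $\bm{f}$ factors to be iid-independent or the $e$-factors to share indices, and in each case an isolated $\Exp e=0$ kills the term. Hence, using \eqref{L1norm} which gives $|\ell_{ik}|\le L<\infty$,
\begin{align*}
\var\!\left(\frac{1}{NT}\sum_{i,t}\bm{\ell}_i\bm{f}_t e_{it}\right)\;\le\;\frac{K^2 L^2\,\psi_1}{NT}\;=\;O\!\left(\tfrac{1}{N^2}\right),
\end{align*}
since $T/N\to Q\in(0,\infty)$. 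Chebyshev's inequality plus Borel--Cantelli along the sequence $(N,T)$ then gives almost sure convergence of the cross term to $0$.

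The only subtlety is the cross term: one must be careful that although $f_{tk}e_{it}$ and $f_{tk}e_{i't}$ share the factor $f_{tk}$ for $i\ne i'$, the vanishing of $\Exp e_{it}$ still forces uncorrelatedness, which is what makes the second-moment bound $O(1/(NT))$ go through. The other two pieces reduce to standard SLLN arguments and should cause no trouble.
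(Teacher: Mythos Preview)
Your argument is correct and follows the same three-term decomposition as the paper; the first and third terms are handled identically (Ces\`aro means plus the SLLN for $\frac{1}{T}\sum_t \bm{f}_t\bm{f}_t^\top\to \mathbf{I}_K$, and the SLLN for the i.i.d.\ array $e_{it}^2$). The only difference is the cross term: the paper enumerates the index pairs $(i,t)$ as a single sequence $(i_j,t_j)_{j\ge1}$ and invokes the Kolmogorov sufficient condition $\sum_j j^{-2}\var X_j<\infty$ for the strong law (Proposition~\ref{prop:Kolmogorov sufficient condition}), whereas you bound the variance directly and apply Chebyshev plus Borel--Cantelli. Your route has the mild advantage that it uses only the pairwise \emph{uncorrelatedness} you verify explicitly---the summands at a common $t$ share the factor $\bm{f}_t$ and are therefore not mutually independent---whereas the Kolmogorov criterion as stated asks for independence. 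The one point worth making explicit in your write-up is that the Borel--Cantelli step needs $\sum 1/(NT)<\infty$ along the sequence, which is immediate under the usual parametrization $T=T(N)$ with $N$ running through the integers.
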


\begin{proof}
By~\eqref{prel}, $\Tr \mathbf{S}/N=\sum_{i=1}^N {\sum_{t=1}^ T}
 {x_{i t}^2}/{(N T)}$ is equal to
\begin{align}\label{lem:weak}
\Tr\left(\frac{\sum_{i=1}^N {\bm{\ell}}_i^\top{\bm{\ell}}_i}{N} \frac{{\sum_{t=1}^ T}
{{\bm{f}}}_t{{\bm{f}}}_t^\top}{T} \right)
 +2\frac{\sum_{i=1}^N {\sum_{t=1}^ T}{\bm{\ell}}_i {{\bm{f}}}_t e_{i t}}{N T}
 +\frac{\sum_{i=1}^N {\sum_{t=1}^ T} e_{i t}^2}{N T}.
\end{align}
 
We will compute the almost sure limits of the three terms of
 \eqref{lem:weak}, in $N,T\to\infty,{T/N \to Q}$.

By  the continuous mapping
 theorem~\cite[Theorem~2.3]{Van98} applied to the continuous function
 $\Tr$, the limit of the first term of \eqref{lem:weak} is
\begin{align}
    \Tr\left( \lim_{{{\substack{N,T\to\infty\\ {T/N \to Q}}}}}\frac{\sum_{i=1}^N {\bm{\ell}}_i^\top{\bm{\ell}}_i}{N} \frac{{\sum_{t=1}^ T}
   {{\bm{f}}}_t{{\bm{f}}}_t^\top}{T}\right)=
    \Tr\left( \lim_{ N\to\infty}\frac{\sum_{i=1}^N {\bm{\ell}}_i^\top{\bm{\ell}}_i}{N} \lim_{T\to\infty}\frac{{\sum_{t=1}^ T}
   {{\bm{f}}}_t{{\bm{f}}}_t^\top}{T}\right).
\end{align}
By Assumption~\ref{ass:conv},
 $\lim_{i\to\infty}{\bm{\ell}}_i^\top{\bm{\ell}}_i=\bm{\ell}^\top\bm{\ell}$,
 so the Ces\`aro sum~\citep{abbott15:_under_analy} satisfies
$\lim_{ N\to\infty}{\sum_{i=1}^N 
  {\bm{\ell}}_i^\top{\bm{\ell}}_i}/{N}  =  \bm{\ell}^\top\bm{\ell}.$
On other hand, by Definition~\ref{def:factor model}, $f_{t k}$ $( 1\le t\le  T,\ 1\le
 k\le K)$ are centered i.i.d. Thus, $f_{t k}f_{t l}$ $(t=1,2,\ldots)$ are i.i.d., for each
 $k,l\in\{1,\ldots,K\}$. Since $\Exp(f_{t k}f_{t l})=1\ (k=l); 0\ (k\ne l)$,  the strong law of large numbers implies that
\begin{align}
 \lim_{T\to\infty} \frac{{\sum_{t=1}^ T}
 {{\bm{f}}}_t{{\bm{f}}}_t^\top}{T}=\Exp {{\bm{f}}}_t{{\bm{f}}}_t^\top =
 \mathbf{I}_K\quad(a.s.).\label{uv}
\end{align}
By this, 
  the limit of the first term of \eqref{lem:weak} is:
\begin{align}\label{p0}
\lim_{{{\substack{N,T\to\infty\\ {T/N \to Q}}}}} \Tr\left(\frac{\sum_{i=1}^N {\bm{\ell}}_i^\top{\bm{\ell}}_i}{N} \frac{{\sum_{t=1}^ T}
{{\bm{f}}}_t{{\bm{f}}}_t^\top}{T} \right) = \Tr \left(
 \bm{\ell}^\top\bm{\ell} \mathbf{I}_K\right)=\norm{\bm{\ell}}^2\quad(a.s.).
\end{align}
 
Next, we consider the second term of \eqref{lem:weak}. We will verify:
\begin{align}
 \label{snd} \lim_{{\substack{N,T\to\infty\\ {T/N \to Q}}}} 2\frac{\sum_{i=1}^N 
{\sum_{t=1}^ T}{\bm{\ell}}_i{{\bm{f}}}_{t} e_{i t}}{N T}=0\quad(a.s.).
\end{align}
Let $(i_j,\  t_j)$ $(j=1,2,3,\ldots)$ be an
enumeration of $\set{(i,t) \colon\, i,t\in\mathbb{Z}_{\ge1}}$. Because
$f_{ t_j,k}$ $(j\ge1,\ 1\le k\le K)$ and $ e_{i_j, t_j}$ $(j\ge1)$
are independent and centered, so are
${\bm{\ell}}_{i_j}{{\bm{f}}}_{ t_j}$ and $ e_{i_j, t_j}$.  Then,
$\sum_{j=1}^\infty j^{-2}\var\left({\bm{\ell}}_{i_j}
{{\bm{f}}}_{ t_j} e_{i_j,  t_j}\right)=\sum_{j=1}^\infty
j^{-2}\var\left({\bm{\ell}}_{i_j} {{\bm{f}}}_{ t_j}\right) \var
e_{i_j,  t_j}=\sum_{j=1}^\infty j^{-2}\var\left({\bm{\ell}}_{i_j}
{{\bm{f}}}_{ t_j}\right) \psi_1$.  Because the $K$ entries of
${{\bm{f}}}_{ t_j}=[f_{ t_j 1},\ldots,f_{ t_j K}]^\top$ are
independent random variables with unit variance for each $j$, \eqref{L1norm} implies
$\sum_{j=1}^\infty{\var\left({\bm{\ell}}_{i_j}{{\bm{f}}}_{ t_j}\right)}/{j^2}=\sum_{j=1}^\infty{\norm{{\bm{\ell}}_{i_j}}^2}/{j^2}<\infty$. Moreover,
$\Exp({\bm{\ell}}_i{{\bm{f}}}_t
e_{i t})=\Exp({\bm{\ell}}_i{{\bm{f}}}_{t})\Exp( e_{i t})=0$. Hence,
 \eqref{snd} follows from:
\begin{proposition}[\protect{Kolmogorov sufficient condition~\cite[Theorem~6.5.4]{gut2013probability}}]\label{prop:Kolmogorov sufficient condition}
 Let $X_1, X_2, \ldots$ be centered, independent random variables with
 finite variance. Then
$\sum_{j=1}^\infty{
\var X_j}/{j^2} < \infty$ implies $\lim_{n\to\infty}{\sum_{j=1}^n
 X_j}/{n}=0$ (a.s.).
\end{proposition}

The third term of  \eqref{lem:weak} tends almost surely to
$\psi_1$, by the strong law of large numbers. Thus, by \eqref{p0} and \eqref{snd}, we get $\eqref{lem:weak}\mathrel{\stackrel{a.s.}{\to}}
\norm{\bm{\ell}}^2+\psi_1=\sigma_\infty^2$.
 This completes the proof of Lemma~\ref{lem:lim2}.
\end{proof}

\bigskip
Now, we prove Theorem~\ref{thm:equilsd2}. 

\begin{proof} The sample correlation matrix $\mathbf{C}$ and the noncentered sample
 correlation matrix ${\mathbf{\tilde C}}$ (Definition~\ref{def:aux}) are invariant
 under scaling of variables. To the factor model, we can assume
\begin{align}
 \ell_{i t}:=\frac{\ell_{i t}}{\sigma_\infty},\quad
 \psi_1:=\frac{\psi_1}{\sigma_\infty^2}. \label{ass:rescale}
\end{align}
 without loss of
 generality.
Moreover, we can safely assume that $\mu_i=0$ $(1\le i\le N)$ to prove that $F^{\mathbf{C}}$ converges weakly to 
$\mathrm{MP}_{Q,1-\rho}$
 almost surely, because $\mathbf{C}$ is invariant under shifting.
It suffices to show that $\lim_{{\substack{N,T\to\infty\\ {T/N \to Q}}}} \Le\left(F^{\mathbf{C}},\
 F^{\mathbf{E}\mathbf{E}^\top}\right)=0$ (a.s.), because Theorem~\ref{thm:equilsd}~\eqref{assert:EE} implies $\lim_{{\substack{N,T\to\infty\\ {T/N \to Q}}}}\Le\left(F^{\mathbf{E}\mathbf{E}^\top},\, 
\mathrm{MP}_{Q,1-\rho}
\right)=0$ (a.s.).
 By
$\mathbf{C}={\mathbf{Y}}{\mathbf{Y}}^\top$, \cite[Lemma~2.7]{bai99:_method} gives an upper bound on
the fourth power of the L\'evy distance
\begin{align}
\label{levy distance:Cp and EET}
\Le^4\left(F^{\mathbf{C}},\  F^{\mathbf{E}\mathbf{E}^\top}\right)\le \frac{2}{N}\Tr\left({\mathbf{Y}}{\mathbf{Y}}^\top+\mathbf{E}\mathbf{E}^\top\right)\times 
 \frac{1}{N}\Tr\left(\left({\mathbf{Y}} - \mathbf{E}\right)\left({\mathbf{Y}} - \mathbf{E}\right)^\top\right).
\end{align}
On the right side of \eqref{levy distance:Cp and EET},
 $\Tr({\mathbf{Y}}{\mathbf{Y}}^\top)/ N=\Tr\mathbf{C}/ N=1$, and
 \begin{align}
   \frac{\Tr(\mathbf{E}\mathbf{E}^\top)}{N}&=\sum_{i=1}^N 
 {\sum_{t=1}^ T}\frac{\left(x_{i t}-\overline{x}_i\right)^2}{N T}=\sum_{i=1}^N {\sum_{t=1}^ T} \frac{x_{i t}^2}{N T}-\sum_{i=1}^N \frac{\overline{x}_i^2}{N}\le\sum_{i=1}^N {\sum_{t=1}^ T} \frac{x_{i t}^2}{N T}.
\label{levi2c}
 \end{align}
Since the rightmost term of \eqref{levi2c} converges almost surely to a finite deterministic value
by Lemma~\ref{lem:lim2}, $\limsup_{{\substack{N,T\to\infty\\ {T/N \to Q}}}} \abs*{\Tr(\mathbf{E}\mathbf{E}^\top)/N}<\infty$ (a.s.). 
Therefore, for  the right side of \eqref{levy
 distance:Cp and EET}, we have only to assure the following:
\begin{align}
 \label{lev}
 \lim_{{\substack{N,T\to\infty\\ {T/N \to Q}}}} \frac{1}{N}\Tr\left(\left({\mathbf{Y}}-\mathbf{E}\right)\left({\mathbf{Y}} - \mathbf{E}\right)^\top\right)= 0\quad(a.s.).
\end{align}

On the other hand, by ${\mathbf{\tilde C}}=T^{-1}{\mathbf{X}}{\mathbf{X}}$ and \cite[Lemma~2.7]{bai99:_method},
$\Le^4\left(F^{\mathbf{\tilde C}},\  F^{T^{-1}{\mathbf{X}}{\mathbf{X}}^\top}\right)$ is at most the product
 of ${2} N^{-1}\Tr\left({\mathbf{\tilde Y}}{\mathbf{\tilde Y}}^\top+T^{-1}{\mathbf{X}}{\mathbf{X}}^\top\right)$ and
$ N^{-1}\Tr\left(\left({\mathbf{\tilde Y}} -  T^{-1/2}{\mathbf{X}}\right)\left({\mathbf{\tilde Y}} -
  T^{-1/2}{\mathbf{X}}\right)^\top\right)$. Here,   $\Tr {\mathbf{\tilde Y}}{\mathbf{\tilde Y}}^\top/N=\Tr {\mathbf{\tilde C}}/ N=1$
 and $\abs*{\Tr \left((N T)^{-1}{\mathbf{X}}{\mathbf{X}}^\top\right)}<\infty$ (a.s.) by
 Lemma~\ref{lem:lim2}. Hence, to prove that $F^{{\mathbf{\tilde C}}}$ converges weakly to
$\mathrm{MP}_{Q,1-\rho}$ almost surely,
it suffices to guarantee 
\begin{align}
\label{levi}
 \lim_{{\substack{N,T\to\infty\\ {T/N \to Q}}}} \frac{1}{N}\Tr\left(\left({\mathbf{\tilde Y}}- T^{-1/2}{\mathbf{X}}\right)\left({\mathbf{\tilde Y}}- T^{-1/2}{\mathbf{X}} \right)^\top\right)= 0\quad(a.s.),
\end{align}

 We will prove \eqref{levi}, and then derive \eqref{lev} from it.
\medskip

(a) \textit{The proof of~\eqref{levi}}.
The left side of~\eqref{levi} is ${\tilde d}_1-2{\tilde d}_2$ where
\begin{align}\label{levi2a}
 {\tilde d}_1=\frac{1}{N T}\sum_{i=1}^N {\sum_{t=1}^ T} {x_{i t}^2}-1,\quad {\tilde d}_2=
   \frac{1}{N}
 \sum_{i=1}^N \sqrt{\frac{1}{T}{\sum_{t=1}^ T}
 x_{i t}^2} - 1,
\end{align}
because $\Tr({\mathbf{\tilde Y}} {\mathbf{\tilde Y}}^\top)=\Tr({\mathbf{\tilde C}})=N$ by the definition of ${\mathbf{\tilde C}}$ and because
$ \Tr({\mathbf{X}} {\mathbf{\tilde Y}}^\top)=\Tr\left(\left[
 {\mathbf{X}}{\bm{x}_1^\top}/{\norm{\bm{x}_1}},\ldots,{\mathbf{X}}{\bm{x}^\top_N}/{\norm{\bm{x}_N}}\right]\right)=\sum_{i=1}^N 
 {\bm{x}_i\bm{x}_i^\top}/{\norm{\bm{x}_i}}=\sum_{i=1}^N \norm{\bm{x}_i}.$
 Since Lemma~\ref{lem:lim2} implies
 \begin{align}
  \lim_{{\substack{N,T\to\infty\\ {T/N \to Q}}}}{\tilde d}_1=0\quad (a.s.), \label{td1}
\end{align}
   our goal \eqref{levi}
 follows from $\lim_{{\substack{N,T\to\infty\\ {T/N \to Q}}}} {\tilde d}_2=0$ (a.s.). Because of this, we will verify
\begin{align}
 \lim_{{\substack{N,T\to\infty\\ {T/N \to Q}}}} \frac{1}{T}{\sum_{t=1}^ T} x_{i t}^2=1\quad(a.s.).
\label{kpn}
\end{align}

By \eqref{prel}, ${\sum_{t=1}^ T} {x_{i t}^2}/T$ is
 \begin{align}
\frac{\Tr\left(  {\bm{\ell}}_i^\top{\bm{\ell}}_i {\sum_{t=1}^ T}
  {{\bm{f}}}_t{{\bm{f}}}_t^\top \right)}{T}
  +2{\sum_{t=1}^ T}\frac{{\bm{\ell}}_i{{\bm{f}}}_t e_{i t}}{T}
  +{\sum_{t=1}^ T}\frac{ e_{i t}^2-\psi_1}{T}
 + \psi_1.\label{dt}
 \end{align}
 The first term of \eqref{dt} tends almost surely to
 $\norm{\bm{\ell}_i}^2$ in ${{N,T\to\infty,\ {T/N \to Q}}}$,
 by \eqref{uv} and the continuous mapping theorem for the
 continuous function $\Tr$.

Next, we will verify that the second term of \eqref{dt} converges almost
surely to $0$, similarly as \eqref{snd}.  Because $f_{t k}$ $(t\ge1,\
1\le k\le K)$ and $ e_{i t}$ $(t\ge1)$ are independent and centered, so
are ${\bm{\ell}}_i{{\bm{f}}}_{t}$ and $ e_{i t}$.   By $\var f_{t k}=1$ $(1\le k\le K)$, we then have
$\sum_{t=1}^\infty t^{-2}\var\left({\bm{\ell}}_i {{\bm{f}}}_{t} e_{i,
t}\right)=\sum_{t=1}^\infty t^{-2}\var\left({\bm{\ell}}_i
{{\bm{f}}}_{t}\right) \var e_{i, t}=\sum_{t=1}^\infty
 t^{-2}\norm{\bm{\ell}_i}^2  \psi_1<\infty$ by~\eqref{L1norm}. Moreover,
 $\bm{\ell}_i\bm{f}_t e_{i t}$ $(t=1,2,3,\ldots)$ are centered independent.
 Hence, by
Proposition~\ref{prop:Kolmogorov sufficient condition}, the second term
of \eqref{dt} converges almost surely to 0:
\begin{align*}
\lim_{{\substack{N,T\to\infty\\ {T/N \to Q}}}}  2  {\sum_{t=1}^ T}\frac{{\bm{\ell}}_i{{\bm{f}}}_{t} e_{i t}}{T}
  =
2\Exp({\bm{\ell}}_i{{\bm{f}}}_t e_{i t})=2\Exp({\bm{\ell}}_i{{\bm{f}}}_{t})\Exp( e_{i t})=0\quad(a.s.).  
\end{align*}

For
 the third term of \eqref{dt}, we will verify
 \begin{align}
\label{3rd}\lim_{{\substack{N,T\to\infty\\ {T/N \to Q}}}} \max_{{1\le i\le N}}\abs*{T^{-1}{\sum_{t=1}^ T} (e_{i t}^2-\psi_1)}=
 0\quad (a.s.). 
 \end{align}
Because we suppose ${{N,T\to\infty,\ {T/N \to Q}}}\in(0,\,\infty)$, there exists $T_1(m)$ such that for all
 $T$,
\begin{align}
 \label{suppose}T>T_1(m)\implies(Q-1/m)N< T<(Q+1/m)N.
\end{align}
 Since $ e_{i t}^2$
 $(1\le i\le N,\ 1\let\le  T)$ are i.i.d. and $\Exp( e_{11}^2)=\psi_1$,
Proposition~\ref{prop:bai-yin2} implies: for every constant $M>0$, almost surely, there exists $T_2(m,M)$
 such that for all $T\ge T_2(m,M)$ we have $\abs*{\max_{1\le i\le
M T}{\sum_{t=1}^ T} (e_{i t}^2-\psi_1)/ T}<m^{-1}$. Hence, by \eqref{suppose}, for all
 $T\ge\max(T_1(m),\,T_2(m,\, (Q-1/m)^{-1}))$, it holds
 \begin{align*}
  \max_{{1\le i\le \frac{T}{Q+1/m}}}
 \abs*{{\sum_{t=1}^ T}\frac{ e_{i t}^2-\psi_1}{T}}\le\max_{{1\le i\le N}}\abs*{{\sum_{t=1}^ T}\frac{ e_{i t}^2-\psi_1}{T}}\le\max_{{1\le i\le \frac{T}{Q-1/m}}} \abs*{{\sum_{t=1}^ T}\frac{ e_{i t}^2-\psi_1}{T}}<\frac{1}{m}.
 \end{align*}
Consequently,  \eqref{3rd} has verified.

To sum up, by \eqref{ass:rescale} and $\psi=1$. Hence,
 \eqref{kpn} is verified. Thus
 \begin{align}
  \lim_{{\substack{N,T\to\infty\\ {T/N \to Q}}}} {\tilde d}_2=0\quad(a.s.).\label{td2}
 \end{align}
 Therefore \eqref{levi} is concluded.

 \bigskip
 (b) \textit{The proof of~\eqref{lev}}.
First, the left side of~\eqref{lev} is $d_1-2d_2$ where
\begin{align}\label{levi2b}
d_1=\frac{1}{N T}\sum_{i=1}^N {\sum_{t=1}^ T}{\left(x_{i t}-\overline{x}_i\right)^2}-1,\quad d_2=\frac{1}{N}\sum_{i=1}^N \left(\frac{\norm{\bm{x}_i-{\overline{\bm{x}}}_i}}{\sqrt{T}}-1\right).
\end{align}
It is proved in a similar way for (a) as follows: By the definition,
 $\Tr({\mathbf{Y}}{\mathbf{Y}}^\top)=N$.  By Definition~\ref{def:aux}, the $i$-th column
 of ${\mathbf{Y}}^\top$ is ${(\bm{x}_i-{\overline{\bm{x}}}_i)^\top}/\norm{\bm{x}_i-{\overline{\bm{x}}}_i}$, so that of
 $\mathbf{E}{\mathbf{Y}}^\top$ is $ T^{-1/2}({\mathbf{X}}-\overline{\mathbf{X}})\
 {(\bm{x}_i-{\overline{\bm{x}}}_i)^\top}/\norm{\bm{x}_i-{\overline{\bm{x}}}_i}$. Thus, $\Tr\left(\mathbf{E}{\mathbf{Y}}^\top\right)=\sum_{i=1}^N 
{(\bm{x}_i-{\overline{\bm{x}}}_i)(\bm{x}_i-{\overline{\bm{x}}}_i)^\top}/\left(\sqrt{T}\norm{\bm{x}_i-{\overline{\bm{x}}}_i}\right)=\sum_{i=1}^N 
  {\norm{\bm{x}_i-{\overline{\bm{x}}}_i}}/{\sqrt{T}}.$

Secondly, we will show $d_1\mathrel{\stackrel{a.s.}{\to}}0$ and $d_2\mathrel{\stackrel{a.s.}{\to}}0$.
From \eqref{levi2a} and \eqref{levi2b}, it follows immediately
\begin{align}
 \abs*{d_1-{\tilde d}_1}
 &=\frac{1}{N T}\abs*{\sum_{i=1}^N {\sum_{t=1}^ T}
{x_{i t}^2}-\sum_{i=1}^N 
 {\sum_{t=1}^ T}{\left(x_{i t}-\overline{x}_i\right)^2}} =\frac{1}{N}\sum_{i=1}^ N{\abs*{\overline{x}_i}^2}\label{aaa}
\end{align}
Thus,
\begin{align}
 \abs*{d_1-{\tilde d}_1}\le \left(\max_{{1\le i\le N}}{\abs*{\overline{x}_i}}\right)^2\le
 \left(\max_{{1\le i\le N}}\abs*{{\sum_{t=1}^ T}\frac{{\bm{\ell}}_i{{\bm{f}}}_t}{T}}+\max_{{1\le i\le N}}\abs*{{\sum_{t=1}^ T}\frac{ e_{i t}}{T}}\right)^2.\label{target}
 \end{align}
As for the first term inside the right side,
\begin{align*}
 \max_{{1\le i\le N}}\abs*{{\sum_{t=1}^ T}\frac{{\bm{\ell}}_i{{\bm{f}}}_t}{T}}
\le 
\max_{{1\le i\le N}} \left({\sum_{k=1}^K}\abs*{\ell_{ik}}\abs*{{\sum_{t=1}^ T}{f_{t k}}/{T}}\right).
 \end{align*}
By \eqref{L1norm},
\begin{align}
 \max_{{1\le i\le N}}\abs*{{\sum_{t=1}^ T}\frac{{\bm{\ell}}_i{{\bm{f}}}_t}{T}}
\le K L \max_{1\le k\le K}\abs*{{\sum_{t=1}^ T}\frac{f_{t k}}{T}}\mathrel{\stackrel{a.s.}{\to}}0\quad\left({{N,T\to\infty,\ {T/N \to Q}}}\right), \label{ppp}
\end{align} 
by the strong law of large numbers, because for each $k$ $(1\le k\le K)$, $f_{t k}=0$ $(t\ge1)$ are centered i.i.d.

Next, we consider the second term $\max_{{1\le i\le N}}\abs*{{\sum_{t=1}^ T} e_{i t}/{T}}$ inside the right side of \eqref{target}.
By Proposition~\ref{prop:bai-yin2}, for every constant $M>0$, almost surely, there exists $T_3(m,M)$
 such that for all $T\ge T_3(m,M)$ we have $\abs*{\max_{1\le i\le
M T}{\sum_{t=1}^ T} e_{i t}/ T}<m^{-1}$. Hence, by \eqref{suppose}, for all
 $T\ge\max(T_1(m),\,T_3(m,\, (Q-1/m)^{-1}))$, it holds
 \begin{align*}
  \max_{{1\le i\le \frac{T}{Q+1/m}}}
 \abs*{{\sum_{t=1}^ T}\frac{ e_{i t}}{T}}\le\max_{{1\le i\le N}}\abs*{{\sum_{t=1}^ T}\frac{ e_{i t}}{T}}\le\max_{{1\le i\le \frac{T}{Q-1/m}}} \abs*{{\sum_{t=1}^ T}\frac{ e_{i t}}{T}}<\frac{1}{m}.
 \end{align*}
Consequently, $\displaystyle\lim_{{\substack{N,T\to\infty\\ {T/N \to Q}}}} \max_{{1\le i\le N}}\abs*{T^{-1}{\sum_{t=1}^ T} e_{i t}}=
 0$ (a.s.). 
By this and \eqref{ppp},
\begin{align}
\abs*{d_1-{\tilde d}_1}\le\left(\max_{{1\le i\le N}}\abs*{{\sum_{t=1}^ T}\frac{{\bm{\ell}}_i{{\bm{f}}}_t}{T}}+\max_{{1\le i\le N}}\abs*{{\sum_{t=1}^ T}\frac{ e_{i t}
 }{T}}\right)^2\mathrel{\stackrel{a.s.}{\to}} 0\quad({{N,T\to\infty,\ {T/N \to Q}}}).
\label{levi2}
\end{align}
Thus, $d_1\mathrel{\stackrel{a.s.}{\to}} 0$, by \eqref{td1}. 

In contrast, by \eqref{kpn} and \eqref{levi2b},
\begin{align*}
\abs*{d_2-{\tilde d}_2}&\le\frac{1}{N\sqrt{T}}\sum_{i=1}^N \abs*{\sqrt{{\sum_{t=1}^ T}(x_{i t}-\overline{x}_i)^2}-\sqrt{{\sum_{t=1}^ T} x_{i t}^2}}.\end{align*}
Note $|\sqrt{r}-\sqrt{s}|\le |r-s|/\sqrt{s}$ $(r,s\ge0)$. Let
 $r={T}^{-1}{\sum_{t=1}^ T} (x_{i t}-\overline{x}_i)^2$ and $s=
 {T}^{-1}{\sum_{t=1}^ T} x_{i t}^2$. Then, 
$|r-s|=(\overline{x}_i)^2$. 
\begin{align*}
\abs*{d_2-{\tilde d}_2}\le N^{-1}\sum_{i=1}^N \abs*{\overline{x}_i}^2 \cdot \left({\sum_{t=1}^ T}{x_{i t}^2}/{T}\right)^{-1/2}.
\end{align*}
On the right side, $ N^{-1}\sum_{i=1}^N \abs*{\overline{x}_i}^2$ 
is $\abs*{d_1-{\tilde d}_1}$ by \eqref{aaa}, which converges almost surely
to 0 by \eqref{levi2}.  
Thus,  \eqref{kpn} implies $\abs*{d_2-{\tilde d}_2}\mathrel{\stackrel{a.s.}{\to}}0$, from which $d_2\mathrel{\stackrel{a.s.}{\to}}
 0$ follows  by \eqref{td2}. Therefore, we have demonstrated
 \eqref{lev}. This completes the proof of Theorem~\ref{thm:equilsd2}.
\end{proof}

\section*{Acknowledgments}
Kazuyoshi Yata and Makoto Aoshima kindly drew the author's attention to
\cite{yata09:_pca_consis_non_gauss_data,YATA20102060,YATA2012193} and
\cite{ishii21:_hypot}. The author thanks the reviewers.

\end{document}